\newtheorem{theorem}{Theorem}[section]
\newtheorem*{theorem*}{Theorem}
\newtheorem{corollary}[theorem]{Corollary}
\newtheorem{remark}[theorem]{Remark}
\newtheorem{lemma}[theorem]{Lemma}
\newtheorem{definition}[theorem]{Definition}
\newtheorem{proposition}[theorem]{Proposition}
\newtheorem{example}[theorem]{Example}
\newtheorem*{proposition*}{Proposition}
\newcommand{\R}{\mathbb{R}}
\newcommand{\be}{\begin{eqnarray*}}
\newcommand{\ee}{\end{eqnarray*}}
\newcommand{\ba}{\begin{align*}}
\newcommand{\bpm}{\begin{pmatrix}}
\newcommand{\epm}{\end{pmatrix}}
\newcommand{\bx}{\boldsymbol{x}}
\newcommand{\by}{\boldsymbol{y}}
\begin{document}
\title{Segal-Bargmann transform for generalized partial-slice monogenic functions}

\author{Zhenghua Xu$^1$ \thanks{This work was partially supported by  the Anhui Provincial Natural Science Foundation (No. 2308085MA04).}
Irene Sabadini$^2$ \thanks{This work was partially supported by PRIN 2022 {\em Real and Complex Manifolds: Geometry and Holomorphic Dynamics}.}\\
\emph{$^1$
\small School of Mathematics, Hefei University of Technology,}\\
\emph{\small  Hefei, 230601, P.R. China}\\
\emph{\small E-mail address: zhxu@hfut.edu.cn}
\\
\emph{\small $^2$ Politecnico di Milano, Dipartimento di Matematica,}\\
\emph{\small Via E. Bonardi, 9, 20133 Milano Italy} \\
\emph{\small E-mail address:   irene.sabadini@polimi.it}
}

\date{}

\maketitle

\begin{abstract}
The concept of generalized partial-slice monogenic functions has been recently introduced to include the two theories of monogenic functions and of slice monogenic functions over Clifford algebras.   The main purpose of this article is to develop the Segal-Bargmann transform and  give a Schr\"{o}dinger  representation  in the setting of generalized partial-slice monogenic functions. To this end,  the generalized partial-slice Cauchy-Kovalevskaya extension plays a crucial role.
\end{abstract}
{\bf Keywords:}\quad Functions of a hypercomplex variable;  monogenic functions; slice monogenic functions;  Segal-Bargmann transform\\
{\bf MSC (2020):}\quad  Primary: 30G35;  Secondary:  44A15, 30H20
\section{Introduction}
The Segal-Bargmann transform (or coherent state transform)  and the  Fock space (or Segal-Bargmann space) play  an essential role in various fields, among which quantum physics \cite{Hall-13}, function theory \cite{Perelomov,Zhu}, infinite-dimensional analysis \cite{Arai}. The study of the so-called Segal-Bargmann transform goes back to the works of  Bargmann and Segal in the early 1960s. The classical Segal-Bargmann transform is a unitary map from the  Schr\"{o}dinger space  of square-integrable functions to the Fock space of holomorphic functions square-integrable with respect to a Gaussian density.

Clifford analysis is an elegant generalization of holomorphic functions on the complex plane to higher dimensions and is meant as the analysis of Clifford algebra valued functions in the kernel of a generalized Cauchy-Riemann operator, called monogenic functions. Quaternionic analysis is a special case and considers the so-called regular functions from the space of quaternions to itself.

There are various generalizations of the Segal-Bargmann transform to quaternionic and Clifford analysis. The Segal-Bargmann transform has been studied in the theories of regular and monogenic functions, see \cite{Dang,Diki-Sabadini,Pena}, and also of slice regular and slice monogenic functions \cite{Cnudde-De,Diki,Diki-Ghanmi,Qian-16,Xu-Ren}. For the relation between the Segal-Bargmann transform and  the short-time Fourier transform  in Clifford Analysis, we refer the interested reader to  \cite{Bernstein-Schufmann,De-Diki}.

Very recently, the concept of generalized partial-slice monogenic functions has been developed to include the theories of of slice monogenic functions  and of monogenic functions in Clifford analysis, see \cite{Xu-Sabadini,Xu-Sabadini-3}. This paper deals with a generalization of the Segal-Bargmann transform in this framework thus providing a point of view which encompasses both the treatments. A crucial role in our approach is played by the generalized partial-slice Cauchy-Kovalevskaya extension (CK-extension) which allows to construct generalized  partial-slice monogenic functions starting from functions which are real analytic in some subset of $\mathbb R^{p+1}$ or, possibly, in the whole $\mathbb R^{p+1}$. Making use of the CK-extension we introduce and study a function which is analog to the exponential function and allows to introduce a suitable gaussian measure in our definition of the Segal-Bergmann transform. This transform has properties which generalize, by considering suitable particular cases, some results   previously demonstrated in the case of slice monogenic \cite{Qian-16} or in the case of monogenic functions \cite{Qian-17}.

The paper is organized as follows. In Section 2, we give a short introduction on generalized partial-slice monogenic functions
over Clifford algebras, in particular providing the main notions and properties  needed in the sequel. In Section 3,  we recall the generalized partial-slice Cauchy-Kovalevskaya extension previously considered in \cite{Xu-Sabadini,Xu-Sabadini-2} and we introduce and study a function which is the CK-extension of a suitable complex exponential and which plays a role in the definition of the Segal-Bargmann transform  for generalized partial-slice monogenic functions.   In Section 4, we  discuss more properties of the CK-extension, especially in the case the extended functions are entire, namely they are defined on the whole Euclidean space. In Section 5, we use the previous results to formulate the Segal-Bargmann theorem (Theorem \ref{main-Coherent-state-transforms})  in the setting of  generalized partial-slice monogenic functions, along with some  direct and interesting consequences which are compared with results in \cite{Qian-16,Qian-17}.   Section 6 is devoted to proving Theorem \ref{main-Coherent-state-transforms}. Finally, Section 7  discusses a Schr\"{o}dinger-type  representation in terms of the Segal-Bargmann transform for generalized partial-slice monogenic functions.

\section{Preliminaries}
In this section, we collect some preliminary results on Clifford algebras and monogenic functions (see \cite{Brackx,Delanghe-Sommen-Soucek, Gurlebeck}) and on  generalized partial-slice monogenic
functions  \cite{Xu-Sabadini,Xu-Sabadini-3}.
\subsection{Clifford algebras}
Given   $n\in \mathbb{N}=\{1,2,\ldots\}$,   let $\{e_1,e_2,\ldots, e_n\}$ be a standard orthonormal basis for the $n$-dimensional real Euclidean space  $\mathbb{R}^n$. A real Clifford algebra, denoted by $\mathbb{R}_{n}$, is generated  by these basis elements assuming that   $$e_i e_j + e_j e_i= -2\delta_{i,j}, \quad 1\leq i,j\leq n,$$
where $\delta_{i,j}$ is the Kronecker symbol.

As a real  vector space, the dimension of the Clifford algebra $\mathbb{R}_n$  is $2^{n}$. Every element in  $\mathbb{R}_{n}$ can be written as
 $$a=\sum_A a_Ae_A, \quad a_A\in \mathbb{R},$$
 where $e_A=e_{j_1}e_{j_2}\cdots e_{j_r}$ with $A=\{j_1,j_2, \cdots, j_k\}\subseteq\{1, 2, \cdots, n\}$ and $1\leq j_1< j_2 < \cdots < j_k \leq n,$ $1\leq k\leq n,$  $ e_\emptyset=e_0=1$.

 For $k=0,1,\ldots,n$, the real linear subspace $\mathbb{R}_n^k$ of $\mathbb{R}_n$, whose elements are called $k$-vectors, is  generated by the $\begin{pmatrix} n\\k\end{pmatrix}$ elements of the form
 $$e_A=e_{j_1}e_{j_2}\cdots e_{j_k},\quad 1\leq j_1<j_2<\cdots<j_k\leq n.$$
 In particular, we denote by $Sc(a)$ the scalar part $a_{0}=a_{\emptyset}$ of $a\in \mathbb{R}_n$.
It is readily seen that
$$\mathbb{R}_n=\mathbb{R}_n^{0}\oplus\mathbb{R}_n^{1} \oplus \cdots \oplus\mathbb{R}_n^{n}.$$

The Clifford conjugation of $a$ is  defined as
$$\overline{a} =\sum_Aa_A\overline{e_A},$$
 where $\overline{e_{j_1}\ldots e_{j_r}}=\overline{e_{j_r}}\ldots\overline{e_{j_1}},\ \overline{e_j}=-e_j,1\leq j\leq n,\ \overline{e_0}=e_0=1$.

By  the Clifford conjugation,  one may introduce an inner product and associated norm on $\mathbb{R}_n$ given by, respectively,
$$(a,b)=Sc(\overline{a}b)=Sc(b\overline{a}),$$
and
$$|a|=\sqrt{Sc(\overline{a}a)}= \sqrt{{\sum_{A}a_{A}^{2}}}.$$

An important subset of Clifford numbers $\mathbb{R}_n$ is the set of the so-called paravectors   $\mathbb{R}_n^{0} \oplus \mathbb{R}_n^{1}$. This subset will  be identified with $\mathbb{R}^{n+1}$ via the map
$$(x_0,x_1,\ldots,x_n) \longmapsto   x=x_{0}+\underline{x}=\sum_{i=0}^{n}e_ix_i.$$
Note that $\overline x=x_0-\underline{x}$. For a paravector $x\neq0$, its norm is given by $|x|=(x\overline x)^{1/2}$  and so its inverse is given by $x^{-1}=  \overline{x}|x|^{-2}.$

Denote $\mathbb{C}_n $ by  the complex Clifford algebra $\mathbb{R}_n\otimes \mathbb{C}$.  Any Clifford number  $\lambda\in \mathbb{C}_n $  may thus be written as
$$\lambda=\sum_A \lambda_Ae_A, \quad \lambda_A\in \mathbb{C},$$
or
$$\lambda= a+bi, \quad a,b\in \mathbb{R}_n.$$
In particular, we denote by  $Sc(\lambda)$ the scalar part $\lambda_{\emptyset}$ of $\lambda \in \mathbb{C}_n$ as in $\mathbb{R}_n$.

The so-called Hermitian conjugation is defined by
$$\lambda^{\dag}=\sum_A \lambda_A^{c}\overline{e_A},$$
where $\cdot^{c}$ denotes the classical complex conjugation on $\mathbb{C}$.
Explicitly,
$$(a+bi)^{\dag}=\overline{a}-\overline{b}i, \quad a,b\in \mathbb{R}_n.$$
Using the Hermitian  conjugation,  one can introduce an inner product and associated norm on $\mathbb{C}_n$ given by, respectively,
\begin{equation}\label{sharp}
(\lambda,\mu)=Sc(\lambda^{\dag}\mu)=Sc(\mu\lambda^{\dag}),
\end{equation}
and
$$|\lambda|=\sqrt{Sc(\lambda^{\dag}\lambda)}= \sqrt{{\sum_{A}|\lambda_{A}|^{2}}}.$$
Next result is well-known and its proof can be found e.g. in  \cite[Theorem 3.14]{Gurlebeck}:

  \begin{proposition}\label{norm-product}
 For $ \lambda,\mu \in \mathbb{C}_n$, one has the following:
 $$|\lambda+\mu |\leq |\lambda|+|\mu |,$$
 and
 $$|\lambda \mu |\leq 2^{\frac{n}{2}}|\lambda| |\mu |.$$
 If $ \lambda$ satisfies the relation  $\lambda\lambda^{\dag}=|\lambda|^{2}$, then it holds that
$$|\lambda \mu |=|\lambda| |\mu |. $$
  \end{proposition}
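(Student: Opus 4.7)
The plan is to treat the three inequalities in turn, leveraging the coordinate formula $|\lambda|^2=\sum_A|\lambda_A|^2$, which identifies $(\mathbb{C}_n,|\cdot|)$ isometrically with the Hermitian space $\mathbb{C}^{2^n}$ and shows that (\ref{sharp}) is a genuine positive-definite Hermitian inner product on $\mathbb{C}_n$ (a one-line check on the basis gives $Sc(e_A^{\dag}e_B)=\delta_{A,B}$). The triangle inequality $|\lambda+\mu|\le|\lambda|+|\mu|$ is then just the Minkowski inequality for this inner product.

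For the submultiplicative bound $|\lambda\mu|\le 2^{n/2}|\lambda||\mu|$, I would expand $\lambda\mu=\sum_{A,B}\lambda_A\mu_B\,e_Ae_B$ in the standard basis. Each product $e_Ae_B$ equals $\pm e_C$ for a unique multi-index $C$, so one can collect terms and write $\lambda\mu=\sum_C c_C e_C$, where each coefficient $c_C$ is a signed sum of exactly $2^n$ products $\lambda_A\mu_{B(A,C)}$, with $A\mapsto B(A,C)$ the bijection determined by the relation $e_Ae_B=\pm e_C$. Applying the Cauchy--Schwarz inequality to the $2^n$ summands of $c_C$ yields $|c_C|^2\le 2^n\sum_A|\lambda_A|^2|\mu_{B(A,C)}|^2$. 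Summing over $C$ and using that for each fixed $A$ the map $C\mapsto B(A,C)$ is also a bijection, the right-hand side collapses to $2^n|\lambda|^2|\mu|^2$, giving the bound after taking square roots.

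For the final assertion, I would first dispose of the trivial case $|\lambda|=0$, which by positive-definiteness forces $\lambda=0$ and makes both sides vanish. Otherwise the hypothesis $\lambda\lambda^{\dag}=|\lambda|^2$ exhibits $\lambda^{\dag}/|\lambda|^2$ as a right inverse of $\lambda$; since $\mathbb{C}_n$ is a finite-dimensional associative algebra, any right inverse is automatically two-sided, hence $\lambda^{\dag}\lambda=|\lambda|^2$ as well. Combining this with the anti-multiplicativity $(\lambda\mu)^{\dag}=\mu^{\dag}\lambda^{\dag}$ and the definition of the norm, one obtains
$$|\lambda\mu|^2 \;=\; Sc\bigl(\mu^{\dag}\lambda^{\dag}\lambda\mu\bigr) \;=\; |\lambda|^2\,Sc\bigl(\mu^{\dag}\mu\bigr) \;=\; |\lambda|^2|\mu|^2,$$
which is the claimed multiplicativity.

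The only step that is not pure bookkeeping is the passage from the one-sided identity $\lambda\lambda^{\dag}=|\lambda|^2$ to its two-sided version $\lambda^{\dag}\lambda=|\lambda|^2$, and I expect this to be the main (though mild) subtlety: finite-dimensionality of $\mathbb{C}_n$ is essential here, as the analogous implication would fail in an infinite-dimensional associative algebra.
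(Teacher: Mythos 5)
Your proof is correct. Note that the paper does not actually prove this proposition: it is stated as well known, with a pointer to \cite[Theorem 3.14]{Gurlebeck}, so there is no internal argument to compare against. Your three steps are all sound: the basis computation $Sc(e_A^{\dag}e_B)=\delta_{A,B}$ does identify $(\cdot,\cdot)$ with the standard Hermitian product on $\mathbb{C}^{2^n}$, giving the triangle inequality; the constant $2^{n/2}$ comes out exactly as you say, since for fixed $C$ the pairs $(A,B)$ with $e_Ae_B=\pm e_C$ are precisely $\{(A,A\triangle C):A\subseteq\{1,\dots,n\}\}$, so each $c_C$ has $2^n$ summands and the Cauchy--Schwarz bound followed by the re-indexing $C\mapsto A\triangle C$ collapses the sum to $2^n|\lambda|^2|\mu|^2$; and you correctly isolate the one genuine subtlety in the last assertion, namely upgrading $\lambda\lambda^{\dag}=|\lambda|^2$ to $\lambda^{\dag}\lambda=|\lambda|^2$, which your finite-dimensionality argument (a one-sided inverse in a finite-dimensional unital associative algebra is two-sided) handles correctly, together with the anti-multiplicativity $(\lambda\mu)^{\dag}=\mu^{\dag}\lambda^{\dag}$ inherited from the Clifford conjugation being an anti-automorphism.
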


\subsection{Generalized partial-slice monogenic functions}
Throughout the paper,
given $n\in \mathbb{N}$ we split it as $n=p+q$  where
 $p\in \mathbb{N}_0=\{0\}\cup \mathbb{N}$, $q\in \mathbb{N}$, and we consider functions $f:\Omega\longrightarrow  \mathbb{C}_{p+q}$, where $\Omega\subseteq\R^{p+q+1}$ is a domain.

An element  $\bx\in\R^{p+q+1}$ will be identified with a paravector in $\mathbb{R}_{p+q}$, and we shall write it as
$$\bx=x+\underline{y} \in\R^{p+1}\oplus\R^q, \quad x=\sum_{i=0}^{p}x_i e_i,\ \underline{y}=\sum_{i=p+1}^{p+q}x_i e_i.$$

Similarly, the generalized Cauchy-Riemann operator $D_{\bx}$ is split as

\begin{equation}\label{Dxx}
D_{\bx}=D_{x} +D_{\underline{y}}, \quad D_{x} =\sum_{i=0}^{p}e_i\partial_{x_i}, \
D_{\underline{y}} =\sum_{i=p+1}^{p+q}e_i\partial_{x_i}.
\end{equation}

Denote by $\mathbb{S}$ the sphere of unit $1$-vectors in $\mathbb R^q$, whose elements $(x_{p+1},\ldots, x_{p+q})$ are identified with $\underline{y}=\sum_{i=p+1}^{p+q}x_i e_i$, i.e.
$$\mathbb{S}=\big\{\underline{y}\ :\ \underline{y}^2 =-1\big\}=\big\{\underline{y}=\sum_{i=p+1}^{p+q}x_i e_i:\sum_{i=p+1}^{p+q}x_i^{2}=1\big\}.$$
Note that, for $\underline{y}\neq0$, there exists a uniquely determined $r\in \mathbb{R}^{+}=\{x\in \mathbb{R}: x>0\}$ and $\underline{\omega}\in \mathbb{S}$, such that $\underline{y}=r\underline{\omega}$, more precisely
 $$r=|\underline{y}|, \quad \underline{\omega}=\frac{\underline{y}}{|\underline{y}|}. $$
 When $\underline{y}= 0$, set $r=0$ and $\underline{\omega}$ is not uniquely defined since $\bx=x+\underline{\omega} \cdot 0$  for every $\underline{\omega}\in \mathbb{S}$.

The upper half-space $\mathrm{H}_{\underline{\omega}}$ in $\mathbb{R}^{p+2}$ associated with $\underline{\omega}\in \mathbb{S}$ is defined by
$$\mathrm{H}_{\underline{\omega}}=\{x+r\underline{\omega}, x \in\R^{p+1}, r\geq0 \},$$
and it is clear from the previous discussion that
$$ \R^{p+q+1}=\bigcup_{\underline{\omega}\in \mathbb{S}} \mathrm{H}_{\underline{\omega}},$$
and
$$ \R^{p+1}=\bigcap_{\underline{\omega}\in \mathbb{S}} \mathrm{H}_{\underline{\omega}}.$$

In the sequel, we shall make use of the notation
$$
\Omega_{\underline{\omega}}:=\Omega\cap (\mathbb{R}^{p+1} \oplus \underline{\omega} \mathbb{R})\subseteq \mathbb{R}^{p+2},
$$
where $\Omega$ is a domain  in $\mathbb{R}^{p+q+1}$.

Now we recall the  notion of  generalized partial-slice monogenic functions following \cite{Xu-Sabadini}, but here we consider functions $\mathbb{C}_{p+q}$-valued while in \cite{Xu-Sabadini} the functions are $\mathbb{R}_{p+q}$-valued. We refer the reader to \cite{Xu-Sabadini} also for the proofs: although the context here is more general, the results below are proved with the same arguments and so they are not repeated.
\begin{definition}[Generalized partial-slice monogenic functions] \label{definition-slice-monogenic}
 Let $\Omega$ be a domain in $\mathbb{R}^{p+q+1}$. A function $f :\Omega \rightarrow \mathbb{C}_{p+q}$ is called (left)  generalized partial-slice monogenic of type $(p,q)$ if, for all $ \underline{\omega} \in \mathbb S$, its restriction $f_{\underline{\omega}}$ to $\Omega_{\underline{\omega}}\subseteq \mathbb{R}^{p+2}$  has continuous partial derivatives and  satisfies
$$D_{\underline{\omega}}f_{\underline{\omega}}(\bx):=(D_{x}+\underline{\omega}\partial_{r}) f_{\underline{\omega}}(x+r\underline{\omega})=0,$$
for all $\bx=x+r\underline{\omega} \in \Omega_{\underline{\omega}}$.
 \end{definition}
In this paper, we denote by $\mathcal {GSM}(\Omega)$ (or $\mathcal {GSM}^{L}(\Omega)$ when needed) the function class of  all  (left) generalized partial-slice monogenic functions  of type $(p,q)$  in $\Omega$.  Since in this work  we always deal with (left) generalized partial-slice monogenic functions  of type $(p,q)$, we omit to specify type $(p,q)$, as the context is clear.

Likewise, denote by $\mathcal {GSM}^{R}(\Omega)$ the set of  all right  generalized partial-slice monogenic functions of type $(p,q)$ $f:\Omega  \rightarrow \mathbb{C}_{p+q}$ which are defined by requiring that the restriction $f_{\underline{\omega}}$ to $\Omega_{\underline{\omega}}$ satisfies
$$f_{\underline{\omega}}(\bx)D_{\underline{\omega}}:=f_{\underline{\omega}} (x+r\underline{\omega})D_{x}+ \partial_{r}f_{\underline{\omega}} (x+r\underline{\omega})\underline{\omega}=0, \quad \bx=x+r\underline{\omega} \in \Omega_{\underline{\omega}},$$
for all  $ \underline{\omega} \in \mathbb S$.

\begin{remark}\label{rem32}
{\rm
 When $(p,q)=(n-1,1)$, the  notion of generalized partial-slice monogenic functions in Definition  \ref{definition-slice-monogenic} coincides with the concept of    \textit{monogenic functions}  defined in $\Omega\subseteq\mathbb{R}^{n+1}$   with values in the Clifford algebra $\mathbb{C}_{n}$, which is denoted by $\mathcal {M}(\Omega)$. For more details on the theory of monogenic functions, see e.g. \cite{Brackx,Delanghe-Sommen-Soucek,Gurlebeck}.
}
\end{remark}

\begin{remark}\label{rem32}
{\rm
When $(p,q)=(0,n)$, Definition  \ref{definition-slice-monogenic}  gives the concept  of  \textit{slice monogenic functions} defined in $\mathbb{R}^{n+1}$ and   with values in the Clifford algebra $\mathbb{C}_{n}$, which is denoted by $\mathcal{SM}(\Omega)$; see \cite{Colombo-Sabadini-Struppa-09} or \cite{Colombo-Sabadini-Struppa-11}.}
\end{remark}
Generalized partial-slice monogenic functions share some properties of slice monogenic functions, in particular, they satisfy the following version of the Splitting Lemma:
\begin{lemma} {\bf(Splitting lemma)}\label{Splitting-lemma}
Let $\Omega\subseteq \mathbb{R}^{p+q+1}$ be a domain and $f:\Omega\rightarrow \mathbb{C}_{p+q}$ be a  generalized partial-slice monogenic function. For   arbitrary, but fixed $\underline{\omega} \in\mathbb{S},$ also denoted by $I_{p+1}$, let $I_{1},I_{2},\ldots,I_{p},I_{p+2},\ldots,I_{p+q}$ be a completion to a basis of $\mathbb{R}_{p+q}$ such that $I_{r}I_{s}+I_{s}I_{r}=-2\delta_{rs}$, $r,s=1,\ldots, p+q$, $I_r\in\mathbb{R}^p$, $r=1,\ldots, p$, $I_r\in\mathbb{R}^q$, $r=p+2,\ldots,p+q$. Then there exist $2^{q-1}$ monogenic functions $F_{A}:\Omega_{\underline{\omega}} \rightarrow \mathbb{C}_{p+1}={\rm Alg}_{\mathbb{C}}\{I_1,\ldots, I_p, I_{p+1}=\underline{\omega}\}$  such that
$$ f_{\underline{\omega}}(\bx_p+r\underline{\omega})=\sum_{A=\{i_{1},\ldots, i_{s}\} \subseteq \{p+2,\ldots,p+q\}}F_{A}(\bx_p+r\underline{\omega})I_{A}, \quad \bx_p+r\underline{\omega}\in \Omega_{\underline{\omega}},$$
where $I_{A}=I_{i_{1}}\cdots I_{i_{s}},  A=\{i_{1},\ldots, i_{s}\} \subseteq \{p+2,\ldots,p+q\}$ with $i_{1}<\cdots<i_{s}$, and $I_{\emptyset}=1$ when $A=\emptyset$.
\end{lemma}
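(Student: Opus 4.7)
The plan is to mimic the classical Splitting Lemma for slice monogenic functions. First I would use the orthonormal basis $I_1,\ldots,I_{p+q}$ to rewrite the operator $D_{\underline{\omega}}$ in adapted coordinates. Since $I_1,\ldots,I_p$ is an orthonormal basis of the space spanned by $e_1,\ldots,e_p$, I can set $\underline{x}_p=\sum_{j=1}^{p}u_j I_j$ with $u_j=(\underline{x}_p,I_j)$; the standard orthogonal-invariance of the Dirac operator (applied only to the spatial variables $x_1,\ldots,x_p$, leaving $\partial_{x_0}$ alone) yields
\[
D_{x}+\underline{\omega}\partial_r \;=\; \partial_{x_0}\;+\;\sum_{j=1}^{p}I_j\,\partial_{u_j}\;+\;I_{p+1}\,\partial_r,
\]
which is precisely the $\mathbb{C}_{p+1}$-valued Dirac operator on $\Omega_{\underline{\omega}}\subseteq\mathbb{R}^{p+2}$.

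Next I would exploit the multiplicative structure of the basis $\{I_B\}_{B\subseteq\{1,\ldots,p+q\}}$. Every subset $B$ splits uniquely as $B=B_1\sqcup B_2$ with $B_1\subseteq\{1,\ldots,p+1\}$ and $B_2\subseteq\{p+2,\ldots,p+q\}$, so (up to the usual signs coming from reordering) one obtains a direct-sum decomposition
\[
\mathbb{C}_{p+q}\;=\;\bigoplus_{A\subseteq\{p+2,\ldots,p+q\}} \mathbb{C}_{p+1}\cdot I_A,
\]
where $\mathbb{C}_{p+1}=\mathrm{Alg}_{\mathbb{C}}\{I_1,\ldots,I_{p+1}\}$. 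This gives the decomposition in the statement: there exist unique functions $F_A:\Omega_{\underline{\omega}}\to\mathbb{C}_{p+1}$, indexed by the $2^{q-1}$ subsets $A\subseteq\{p+2,\ldots,p+q\}$, such that $f_{\underline{\omega}}=\sum_A F_A\, I_A$, and these $F_A$ inherit continuous partial derivatives from $f_{\underline{\omega}}$.

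Finally, since each $I_A$ is constant in $\bx$, applying $D_{\underline{\omega}}$ term by term gives
\[
0\;=\;D_{\underline{\omega}}f_{\underline{\omega}}\;=\;\sum_A \Bigl[\bigl(\partial_{x_0}+\sum_{j=1}^{p}I_j\partial_{u_j}+I_{p+1}\partial_r\bigr)F_A\Bigr]\, I_A\;=\;\sum_A G_A\, I_A.
\]
Crucially, each $G_A$ again takes values in $\mathbb{C}_{p+1}$, because the operator acts on $F_A$ only via left multiplication by the generators $I_1,\ldots,I_{p+1}$ of the subalgebra. The direct sum decomposition then forces $G_A=0$ for every $A$, which is exactly the monogenicity of $F_A$ with respect to the Dirac operator on $\mathbb{C}_{p+1}$.

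The one step I expect to require genuine care is the change-of-coordinates computation for $D_x$: one must verify that expressing $\sum_{i=1}^{p}e_i\partial_{x_i}$ in the new orthonormal basis indeed produces $\sum_{j=1}^{p}I_j\partial_{u_j}$ without cross-terms, and that the scalar derivative $\partial_{x_0}$ is untouched because the change of basis is confined to $\mathrm{span}\{e_1,\ldots,e_p\}$. Everything else is a bookkeeping exercise with the Clifford anticommutation relations and the uniqueness of the basis expansion.
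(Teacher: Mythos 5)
Your proposal is correct and follows essentially the same route as the paper's proof (which the paper does not reproduce but defers to the reference [Xu-Sabadini], where precisely this argument appears): decompose $\mathbb{C}_{p+q}=\bigoplus_{A}\mathbb{C}_{p+1}I_A$, note that $D_{\underline{\omega}}$ acts by left multiplication by $1,I_1,\ldots,I_{p+1}$ and hence preserves each summand, and conclude $D_{\underline{\omega}}F_A=0$ from the directness of the sum. The one step you flag as delicate, the orthogonal invariance giving $\sum_{i=1}^{p}e_i\partial_{x_i}=\sum_{j=1}^{p}I_j\partial_{u_j}$, indeed checks out since the anticommutation relations force $I_1,\ldots,I_p$ to be an orthonormal basis of $\mathrm{span}\{e_1,\ldots,e_p\}$.
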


In the sequel, to prove some results we need  more terminology and the Identity Principle which are stated below.
\begin{definition} \label{slice-domain}
 Let $\Omega$ be a domain in $\mathbb{R}^{p+q+1}$.

1.   $\Omega$ is called  slice domain if $\Omega\cap\mathbb R^{p+1}\neq\emptyset$  and $\Omega_{\underline{\omega}}$ is a domain in $\mathbb{R}^{p+2}$ for every  $\underline{\omega}\in \mathbb{S}$.

2.   $\Omega$   is called  partially  symmetric with respect to $\mathbb R^{p+1}$ (p-symmetric for short) if, for   $x\in\R^{p+1}, r \in \mathbb R^{+},$ and $ \underline{\omega}  \in \mathbb S$,
$$\bx=x+r\underline{\omega} \in \Omega\Longrightarrow [\bx]:=x+r \mathbb S=\{x+r \underline{\omega}, \ \  \underline{\omega}\in \mathbb S\} \subseteq \Omega. $$
 \end{definition}
Note that the condition of that $\Omega$ is p-symmetric is an $SO(q)$-invariant domain with respect to $\mathbb R^{p+1}$.

Denote by  $B(\by,\rho)=\{\bx \in \mathbb{R}^{p+q+1}: |\by-\bx|<\rho \}$  the ball centered in $\by\in \mathbb{R}^{p+q+1}$ with radius $\rho>0$.
Obviously, $B(\by,\rho)$ are p-symmetric slice domains for all $\by\in \mathbb{R}^{p+1}$.

We recall   an identity theorem for generalized partial-slice monogenic functions over slice domains. Denote by $\mathcal{Z}_{f}(\Omega)$  the zero set of the function $f:\Omega\subseteq\mathbb{R}^{n+1}\rightarrow \mathbb{C}_{n}$.

\begin{theorem}  {\bf(Identity theorem)}\label{Identity-theorem}
Let $\Omega\subseteq \mathbb{R}^{p+q+1}$ be a  slice domain and $f,g:\Omega\rightarrow \mathbb{C}_{p+q}$ be   generalized partial-slice monogenic functions.
If there is an imaginary $\underline{\omega}  \in \mathbb S $ such that $f=g$ on a   $(p+1)$-dimensional smooth manifold in $\Omega_{\underline{\omega}}$, then $f\equiv g$ in  $\Omega$.
\end{theorem}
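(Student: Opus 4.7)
The plan is to reduce the statement to the classical Identity Theorem for monogenic functions on domains in $\mathbb{R}^{p+2}$ via Lemma \ref{Splitting-lemma}, and then to propagate the vanishing from the distinguished slice to every other slice through $\mathbb{R}^{p+1}$.

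Set $h = f - g \in \mathcal{GSM}(\Omega)$; it suffices to prove $h \equiv 0$. With $\underline{\omega}_0 \in \mathbb{S}$ and a smooth $(p+1)$-manifold $M \subseteq \Omega_{\underline{\omega}_0}$ as in the hypothesis, complete $\underline{\omega}_0$ to a basis as in Lemma \ref{Splitting-lemma} and write
$$
h_{\underline{\omega}_0}(\bxp + r\underline{\omega}_0) = \sum_{A \subseteq \{p+2,\ldots,p+q\}} F_A(\bxp + r\underline{\omega}_0)\, I_A,
$$
where each $F_A$ is monogenic on the planar domain $\Omega_{\underline{\omega}_0} \subseteq \mathbb{R}^{p+2}$ with values in $\mathrm{Alg}_{\mathbb{C}}\{I_1,\ldots,I_p,\underline{\omega}_0\}$. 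Since $\{I_A\}$ is a left-basis of $\mathbb{C}_{p+q}$ over that subalgebra, the condition $h_{\underline{\omega}_0}|_M = 0$ forces $F_A|_M = 0$ for every $A$.

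Next, I would invoke the Identity Theorem for classical monogenic functions in $p+2$ real variables applied to each $F_A$. Monogenic functions are real analytic; $M$ is a smooth hypersurface in the connected open set $\Omega_{\underline{\omega}_0}$; tangential derivatives of $F_A$ along $M$ vanish because $F_A|_M = 0$; and the elliptic equation $(D_x + \underline{\omega}_0 \partial_r) F_A = 0$ expresses the transverse first derivative as a combination of tangential ones. Iterating this identification shows that every partial derivative of $F_A$ vanishes at each point of $M$, and real analyticity combined with the connectedness of $\Omega_{\underline{\omega}_0}$ yields $F_A \equiv 0$, hence $h_{\underline{\omega}_0} \equiv 0$ on $\Omega_{\underline{\omega}_0}$.

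Finally, the propagation step. Since $\Omega$ is a slice domain, $U := \Omega \cap \mathbb{R}^{p+1}$ is a non-empty open subset of $\mathbb{R}^{p+1}$ and $U \subseteq \Omega_{\underline{\omega}}$ for every $\underline{\omega} \in \mathbb{S}$. The preceding step already gives $h \equiv 0$ on $U$, and $U$ is a $(p+1)$-dimensional smooth hypersurface in each $\Omega_{\underline{\omega}}$. Repeating the Splitting-plus-analytic-continuation argument with $\underline{\omega}$ arbitrary and $U$ playing the role of $M$ yields $h_{\underline{\omega}} \equiv 0$ on $\Omega_{\underline{\omega}}$ for every $\underline{\omega} \in \mathbb{S}$; since $\Omega = \bigcup_{\underline{\omega} \in \mathbb{S}} \Omega_{\underline{\omega}}$, we conclude $h \equiv 0$ on $\Omega$. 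The main obstacle is the analytic content of the second step---upgrading vanishing on a smooth hypersurface to vanishing on an open set for solutions of the generalized Cauchy--Riemann system---which rests on the Cauchy--Kowalewski-type unique continuation property for the elliptic Dirac operator rather than on naive real-analytic uniqueness.
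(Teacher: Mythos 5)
Your proposal is correct and follows essentially the same route as the paper's argument (which is deferred to \cite{Xu-Sabadini}): split via Lemma \ref{Splitting-lemma} into monogenic components, apply the identity theorem for monogenic functions on the distinguished slice using real analyticity and the Cauchy--Kowalewski-type recovery of the normal derivative from the equation, and then propagate to all other slices through the nonempty set $\Omega\cap\mathbb{R}^{p+1}$ guaranteed by the slice-domain hypothesis. No gaps to report.
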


\section{CK-extension}

In this section, we review the generalized partial-slice Cauchy-Kovalevskaya extension already given  in \cite{Xu-Sabadini,Xu-Sabadini-2} and we discuss  more properties and examples, which shall be used in the formulation of the Segal-Bargmann transform for generalized partial-slice
monogenic functions.

\begin{definition}[CK-extension]\label{Slice-Cauchy-Kovalevska-extension}
Let   $\Omega_{0}$ be a domain in  $\mathbb{R}^{p+1}$ and let $\Omega_{0}^{\ast}$ be the domain  defined by
$$\Omega_{0}^{\ast}=\{ x+\underline{y}:x\in \Omega_{0}, \underline{y}\in \mathbb{R}^{q}\}.$$
Given a  real analytic function $f_{0}: \Omega_{0}  \to \mathbb{C}_{p+q}$, if there is a generalized partial-slice monogenic function $f^{\ast}$ in $\Omega \subseteq \Omega_{0}^{\ast}$ with $\Omega_{0} \subset \Omega$ and $f^{\ast}(x)=f_0(x)$,
the function $f^{\ast}$  is called the generalized partial-slice Cauchy-Kovalevskaya extension (CK-extension, for short) of the real analytic function $f_{0}$.
\end{definition}

\begin{theorem}(\cite[Theorem 4.2]{Xu-Sabadini-2}) \label{slice-Cauchy-Kovalevsky-extension}
Let   $\Omega_{0}$ be a domain in  $\mathbb{R}^{p+1}$ and  $f_{0}: \Omega_{0}  \to \mathbb{C}_{p+q}$ be a real analytic function.  Then the function given by
\begin{equation}\label{CKCC}
CK[f_{0}](\bx)= \exp (\underline{y} D_{x} ) f_{0}(x)
=\sum_{k=0}^{+\infty} \frac{1}{k!}( \underline{y} D_{x})^{k}f_{0}(x), \quad \bx=x+ \underline{y},
\end{equation}
is the CK-extension in a  p-symmetric slice domain $\Omega \subseteq \Omega_{0}^{\ast}$ with $\Omega_{0} \subset \Omega$.
\end{theorem}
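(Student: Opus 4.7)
The plan is to verify four separate statements: (i) the series defining $CK[f_0]$ converges in a suitable p-symmetric slice domain $\Omega$ with $\Omega_0\subset\Omega\subseteq\Omega_0^{\ast}$; (ii) the resulting function is generalized partial-slice monogenic on $\Omega$; (iii) its restriction to $\Omega_0$ coincides with $f_0$; and (iv) such an extension is unique. Point (iii) is immediate since only the $k=0$ term survives when $\underline{y}=0$, and (iv) follows from the Identity Theorem~\ref{Identity-theorem} because $\Omega_0$ is a $(p+1)$-dimensional submanifold sitting inside every slice $\Omega_{\underline{\omega}}$ of a slice domain.

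For the convergence in (i), I would first exploit the algebraic structure of $\underline{y}D_{x}$. Since $\underline{y}\in\mathrm{span}\{e_{p+1},\ldots,e_{p+q}\}$ and $D_{x}=\sum_{i=0}^{p}e_{i}\partial_{x_i}$, the anti-commutation relations in the Clifford algebra yield $D_{x}\underline{y}=-\underline{y}D_{x}$, so $(\underline{y}D_{x})^{2}=-\underline{y}^{2}D_{x}^{2}=|\underline{y}|^{2}D_{x}^{2}$; iterating gives
\[
(\underline{y}D_{x})^{2j}=|\underline{y}|^{2j}D_{x}^{2j}, \qquad (\underline{y}D_{x})^{2j+1}=|\underline{y}|^{2j}\,\underline{y}\,D_{x}^{2j+1}.
\]
By real analyticity of $f_{0}$, on each compact $K\subset\Omega_{0}$ there exist constants $C,R>0$ with $|D_{x}^{k}f_{0}(x)|\leq C\,k!/R^{k}$. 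Combining these with the Clifford norm bound $|\lambda\mu|\leq 2^{(p+q)/2}|\lambda||\mu|$ from Proposition~\ref{norm-product}, the series is seen to converge absolutely whenever $|\underline{y}|$ is smaller than a function $\delta(x)$ of $x\in\Omega_0$; the key point is that this bound depends only on $x$ and $|\underline{y}|$, not on the direction $\underline{\omega}=\underline{y}/|\underline{y}|$, so the resulting domain $\Omega=\{x+\underline{y}:x\in\Omega_{0},\,|\underline{y}|<\delta(x)\}$ is automatically p-symmetric and, provided $\delta$ is chosen continuous, is a slice domain.

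For the monogenicity claim (ii), fix $\underline{\omega}\in\mathbb{S}$ and restrict to the slice by writing $\bx=x+r\underline{\omega}$, so that $f_{\underline{\omega}}(\bx)=\sum_{k\geq 0}\frac{r^{k}}{k!}(\underline{\omega}D_{x})^{k}f_{0}(x)$. Applying $D_{x}+\underline{\omega}\partial_{r}$ term by term reduces the claim to the identity $D_{x}(\underline{\omega}D_{x})^{k}+\underline{\omega}(\underline{\omega}D_{x})^{k+1}=0$ for every $k\geq 0$, which I would check by splitting into parities: for $k=2j$ both terms equal $\pm D_{x}^{2j+1}$ (using $\underline{\omega}^{2}=-1$), and for $k=2j+1$ the anticommutation $D_{x}\underline{\omega}=-\underline{\omega}D_{x}$ again produces exact cancellation. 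Termwise differentiation is justified by the uniform convergence on compact subsets established in the previous step. The main obstacle I anticipate is managing the Clifford norm constants in the convergence estimate so that the radius $\delta(x)$ is genuinely positive and depends continuously on $x$; this would be handled by absorbing the factor $2^{(p+q)/2}$ into a slightly smaller effective radius $R'<R$ and working in shrunken neighborhoods of compact subsets of $\Omega_{0}$.
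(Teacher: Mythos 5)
Your overall architecture---absolute convergence of the operator series on a neighbourhood of $\Omega_0$ whose size depends only on $|\underline{y}|$, termwise verification of $D_{\underline{\omega}}$-closedness, restriction to $\underline{y}=0$, and uniqueness via Theorem \ref{Identity-theorem}---is the standard one; note that the paper itself omits this proof, deferring to \cite{Xu-Sabadini-2}, but it records precisely the parity decomposition you are aiming for in Remark \ref{Cauchy-Kovalevska-extension-odd-even}. The genuine problem is that your key algebraic identity is false: $D_x\underline{y}\neq-\underline{y}D_x$, because $D_x=\sum_{i=0}^{p}e_i\partial_{x_i}$ contains the scalar term $e_0\partial_{x_0}=\partial_{x_0}$, which commutes with $\underline{y}$ rather than anticommuting. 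The correct relation is $D_x\underline{y}=\underline{y}\overline{D_x}$ with $\overline{D_x}=\partial_{x_0}-\sum_{i=1}^{p}e_i\partial_{x_i}$, whence $(\underline{y}D_x)^2=\underline{y}^2\,\overline{D_x}D_x=-|\underline{y}|^2\Delta_x$ and therefore $(\underline{y}D_x)^{2j}=|\underline{y}|^{2j}(-\Delta_x)^j$ and $(\underline{y}D_x)^{2j+1}=|\underline{y}|^{2j}\,\underline{y}\,(-\Delta_x)^jD_x$---not $|\underline{y}|^{2j}D_x^{2j}$, which is a different and non-scalar operator, since $D_x^2=\partial_{x_0}^2+2\partial_{x_0}\sum_{i=1}^pe_i\partial_{x_i}-\sum_{i=1}^p\partial_{x_i}^2\neq\Delta_x$. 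Fortunately everything you actually need survives the correction: the even and odd parts still depend on $\underline{y}$ only through $|\underline{y}|$ and the single factor $\underline{y}$ (which gives the p-symmetry of the resulting domain), and the real-analyticity bounds $|\partial_{\mathrm{k}}f_0|\leq C\,\mathrm{k}!/R^{|\mathrm{k}|}$ on compacta control $(-\Delta_x)^jf_0$ and $(-\Delta_x)^jD_xf_0$ well enough for local absolute convergence, at the cost of the dimensional constants you already anticipate absorbing into a smaller radius.

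Two smaller points. First, the identity to which you reduce monogenicity, $D_x(\underline{\omega}D_x)^k+\underline{\omega}(\underline{\omega}D_x)^{k+1}=0$, needs no parity analysis and no anticommutation at all: $\underline{\omega}(\underline{\omega}D_x)^{k+1}=\underline{\omega}^2D_x(\underline{\omega}D_x)^k=-D_x(\underline{\omega}D_x)^k$. Second, termwise application of $D_x+\underline{\omega}\partial_r$ requires locally uniform convergence of the differentiated series, not merely of the original one; with the corrected estimates this is harmless because the differentiated series has the same form, but it should be stated. For comparison, the route the paper does write out in full (for the globally convergent case, Section \ref{Sect4}) is genuinely different: it expands $f_0$ in its Taylor series, replaces each monomial $x^{\mathrm{k}}$ by the Fueter polynomial $\mathrm{k}!\,\mathcal{P}_{\mathrm{k}}=CK[x^{\mathrm{k}}]$, estimates $|\mathcal{P}_{\mathrm{k}}|$ via Proposition \ref{estimate-Fueter}, and invokes the Weierstrass-type Lemma \ref{Weierstrass}; that approach avoids differentiating under the sum entirely because every partial sum is already generalized partial-slice monogenic.
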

We omit the proof of this result since it repeats verbatim the proof of  \cite[Theorem 4.2]{Xu-Sabadini-2}, the only difference being the values of the function under consideration.

\begin{remark}\label{Cauchy-Kovalevska-extension-special-cases}
When $(p,q)=(0,n), (n-1,1)$, the  CK-extension given by Definition \ref{Slice-Cauchy-Kovalevska-extension} reduces to the standard slice  monogenic extension (denoted by $M_{s}$), and  to  a  modified version of the  Cauchy-Kovalevskaya extension for monogenic functions, respectively.  See  \cite[Definition 14.5]{Brackx} for the classical Cauchy-Kovalevskaya extension for monogenic functions and \cite{Delanghe-Sommen-Soucek} for the case of monogenic functions with values in the complexified Clifford algebra. The case  $(p,q)=(0,1)$, when the functions have values in the real Clifford algebra $\mathbb R_1$ namely $\mathbb C$, reduces to the standard  holomorphic extension, or analytic  continuation, denoted by $\mathcal{C}$. We shall use
$\mathcal{C}$ also to denote the analytic continuation for functions with values in $\mathbb C_1$, namely the algebra of bicomplex numbers.
\end{remark}

\begin{remark}\label{Cauchy-Kovalevska-extension-odd-even}
Note that   $CK[f_{0}]$ has the following  useful decomposition
$$ CK[f_{0}](\bx)=\sum_{k=0}^{+\infty} \frac{r^{2k}}{(2k)!}  (-\Delta_{x})^{k} f_{0}(x)
+\underline{\omega} D_{x} \sum_{k=0}^{+\infty} \frac{r^{2k+1}}{(2k+1)!}(-\Delta_{x}) ^{k}f_{0}(x),
$$
where $\bx=x+\underline{y}$ with $\underline{y}=r\underline{\omega}, r=|\underline{y}|$. See also Remark 4.4 in \cite{Xu-Sabadini-2}.
\end{remark}

\begin{example}\label{example-polynomial}
Let $k\in \mathbb{N}_0$ and $p_k(x, \xi )=(\langle x, \xi \rangle)^{k}$, $x,\xi\in\mathbb{R}^{p+1}$.
Define
$$p_k(\bx, \xi):=CK  [p_k(\cdot, \xi )](\bx).$$
Then $$p_k(\bx, \xi)=(\langle x, \xi \rangle+\underline{y} \xi)^{k},  \quad \bx=x+\underline{y}.$$
\end{example}
\begin{proof}
Note that
$$ D_{x}(\langle x, \xi \rangle)^{k}=k(\langle x, \xi \rangle)^{k-1}\xi,$$
then we have $$( \underline{y} D_{x})^{2}(\langle x, \xi \rangle)^{k}=k \underline{y} D_{x}(\langle x, \xi \rangle)^{k-1} \underline{y}\xi=k(k-1)(\langle x, \xi \rangle)^{k-2} (\underline{y}\xi)^{2}.$$
 Assuming that, for $l< k$, the formula
$$( \underline{y} D_{x})^{l}(\langle x, \xi \rangle)^{k}=k (k-1)\ldots(k-l+1)(\langle x, \xi \rangle)^{k-l} (\underline{y}\xi)^l$$
is valid, it is immediate to show that it holds also for $l+1$. When $l+1=k$ the right hand side equals $k!$ and so $( \underline{y} D_{x})^{\ell}(\langle x, \xi \rangle)^{k}=0$ for all $\ell> k$.
Hence we obtain
\begin{eqnarray*}
CK[p_k(\cdot, \xi )](\bx)
 &=&\sum_{l=0}^{+\infty} \frac{1}{l!}( \underline{y} D_{x})^{l}(\langle x, \xi \rangle)^{k}
 \\
&=&\sum_{l=0}^{k} \frac{k(k-1)\cdots(k-l+1)}{l!} (\langle x, \xi \rangle)^{k-l} (\underline{y}\xi)^{l}
\\
&=&(\langle x, \xi \rangle+\underline{y} \xi)^{k},
\end{eqnarray*}
as asserted.
\end{proof}

\begin{remark}
Note that  real-analytic functions $f$ on $\mathbb{R}^{n}$, without further assumptions, may not  extend to a monogenic function  on $\mathbb{R}^{n+1}$ (compare with \cite[Corollary 14.6 and Theorem 14.8]{Brackx}). This fact is well-known in complex analysis and in fact, if we consider $(p,q)=(0,1)$ and the real analytic function  $f_{0}(x)=\frac{1}{1+x^{2}}$  on $\mathbb{R}$, which has the holomorphic  extension $f(z)=\frac{1}{1+z^{2}}$ on the  neighbourhood  $\{z=x+yi\in \mathbb{C}: x\in \mathbb{R}, -1<y<1 \}\subset \mathbb{C}$ of $\mathbb{R}$, it is immediate that it cannot be extended  analytically to the whole complex space  $\mathbb{C}$.
\end{remark}

Let $\mathrm{k}=(k_0,k_1,\ldots,k_{p})\in\mathbb{N}_{0}^{p+1}, k=|\mathrm{k}| = k_0+k_1+\ldots+k_p$, and  $f:\Omega_{0}\subseteq \mathbb{R}^{p+1}\rightarrow \mathbb{C}_{p+q}$ be a function with continuous $k$-times partial derivatives,  we denote by $\partial_{\mathrm{k}}$ the   partial derivative given by
$$ \partial_{\mathrm{k}}f(x)=\frac{\partial^{k} }{\partial_{x_{0}}^{k_0}\partial_{x_{1}}^{k_1}\cdots\partial_{x_{p}}^{k_p} }f(x).
$$

Below we denote by $C^{\infty} (\Omega_{0}, \mathbb{C}_{p+q})$ the right $\mathbb{C}_{p+q}$-module of $C^{\infty}$ functions defined on $\Omega_{0}$.
\begin{proposition}\label{Ck-entire-condition}
Let   $\Omega_{0}$ be a domain in  $\mathbb{R}^{p+1}$ and  $f_{0}\in C^{\infty} (\Omega_{0}, \mathbb{C}_{p+q})$. If $ \partial_{\mathrm{k}}f_{0}(x) $ is uniformly bounded on $\Omega_{0}$ for all  $\mathrm{k}\in\mathbb{N}_{0}^{p+1}$ such that $|\mathrm k|$ is large enough, then
$CK[f_{0}]$ is well-defined on  $\Omega_{0}^{\ast}$.
\end{proposition}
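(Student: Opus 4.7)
The plan is to show that under the boundedness hypothesis the series
\[
CK[f_0](\bx)=\sum_{k=0}^{+\infty}\frac{1}{k!}(\underline{y}D_x)^k f_0(x)
\]
converges absolutely at every $\bx=x+\underline{y}\in\Omega_0^{\ast}$, and uniformly on compact subsets. This is sufficient, because Theorem \ref{slice-Cauchy-Kovalevsky-extension} already supplies the CK-extension on some p-symmetric slice domain $\Omega\subseteq\Omega_0^{\ast}$ through exactly this series, so once convergence is established on all of $\Omega_0^{\ast}$ nothing more is required.

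First I would expand the iterated operator explicitly. Since $\underline{y}$ is independent of $x$, and the partial derivatives of $f_0$ commute,
\[
(\underline{y}D_x)^kf_0(x)=\sum_{i_1,\dots,i_k=0}^{p}(\underline{y}e_{i_1})(\underline{y}e_{i_2})\cdots(\underline{y}e_{i_k})\,\partial_{x_{i_1}}\cdots\partial_{x_{i_k}}f_0(x).
\]
Being a real $1$-vector in $\mathbb{R}^q$, $\underline{y}$ satisfies $\underline{y}\,\underline{y}^{\dag}=-\underline{y}^2=|\underline{y}|^2$, and each basis vector also satisfies $e_j\,e_j^{\dag}=1$. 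The last clause of Proposition \ref{norm-product} then gives $|\underline{y}\mu|=|\underline{y}||\mu|$ and $|e_j\nu|=|\nu|$ for all $\mu,\nu\in\mathbb{C}_{p+q}$, so iterating yields the clean identity
\[
\bigl|(\underline{y}e_{i_1})(\underline{y}e_{i_2})\cdots(\underline{y}e_{i_k})\bigr|=|\underline{y}|^{k}.
\]
By hypothesis there exist $K_0\in\mathbb{N}$ and $M>0$ with $|\partial_{\mathrm{k}}f_0(x)|\le M$ for all $x\in\Omega_0$ whenever $|\mathrm{k}|\ge K_0$. Counting the $(p+1)^k$ summands in the display above, for $k\ge K_0$ this produces
\[
\frac{1}{k!}\bigl|(\underline{y}D_x)^kf_0(x)\bigr|\le\frac{M\bigl((p+1)|\underline{y}|\bigr)^{k}}{k!},
\]
so the tail of the series is dominated termwise by the Taylor expansion of $M\,e^{(p+1)|\underline{y}|}$, yielding absolute convergence at every point of $\Omega_0^{\ast}$ and uniform convergence on any set where $|\underline{y}|$ is bounded.

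The finitely many terms with $k<K_0$ are polynomials in $\underline{y}$ whose coefficients are smooth in $x$, hence uniformly bounded on compact subsets of $\Omega_0^{\ast}$; they play no role in the convergence question. To conclude that the limit is a genuinely generalized partial-slice monogenic function on $\Omega_0^{\ast}$, I would differentiate the series term by term, which is legitimate by the uniform convergence on compacts, and verify that each partial sum is annihilated by $D_{\underline{\omega}}$ on every slice $\Omega_0^{\ast}\cap(\mathbb{R}^{p+1}\oplus\underline{\omega}\mathbb{R})$, exactly as in the proof of Theorem \ref{slice-Cauchy-Kovalevsky-extension}; alternatively, the sum coincides with the CK-extension on the smaller slice domain $\Omega$ supplied by that theorem, so the Identity Theorem \ref{Identity-theorem} propagates the monogenicity to all of $\Omega_0^{\ast}$. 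The main obstacle is balancing the combinatorial count $(p+1)^k$ against $1/k!$ and the geometric factor $|\underline{y}|^k$; it is precisely the exact norm identity for left multiplication by a real $1$-vector, together with the uniform bound on the derivatives (in place of the $k!$-type growth of a generic real-analytic function), that makes the majorant entire in $|\underline{y}|$ and so gives convergence on the whole of $\Omega_0^{\ast}$.
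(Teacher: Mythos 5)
Your argument is correct. It reaches the same conclusion as the paper by the same underlying mechanism (uniform bounds on high-order derivatives beat the $1/k!$, giving an entire exponential majorant in $|\underline{y}|$), but the bookkeeping is genuinely different. The paper does not expand $(\underline{y}D_x)^k$ at all: it invokes the even/odd decomposition of Remark \ref{Cauchy-Kovalevska-extension-odd-even}, writing $CK[f_0]$ as $\sum_l \frac{r^{2l}}{(2l)!}(-\Delta_x)^l f_0 + \underline{\omega}D_x\sum_l\frac{r^{2l+1}}{(2l+1)!}(-\Delta_x)^l f_0$, and then simply observes that the two tails are absolutely convergent because the iterated Laplacian is a fixed finite combination of uniformly bounded derivatives. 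You instead expand the iterated operator into $(p+1)^k$ summands $(\underline{y}e_{i_1})\cdots(\underline{y}e_{i_k})\partial_{x_{i_1}}\cdots\partial_{x_{i_k}}f_0$ and control the Clifford factor exactly via the last clause of Proposition \ref{norm-product} (since $\underline{y}\,\underline{y}^{\dag}=|\underline{y}|^2$ and $e_je_j^{\dag}=1$), getting the clean majorant $M((p+1)|\underline{y}|)^k/k!$. Your route is more self-contained and makes the constants explicit, at the cost of the multinomial expansion; the paper's route hides the combinatorics inside the Laplacian powers and is shorter, though it leaves the absolute-convergence estimate implicit. The extra material at the end of your proof (term-by-term differentiation, propagation of monogenicity via Theorem \ref{Identity-theorem}) is not needed for the statement as phrased, which only asserts that $CK[f_0]$ is well-defined on $\Omega_0^{\ast}$, but it is harmless and correct since $\Omega_0^{\ast}$ is a p-symmetric slice domain.
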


\begin{proof}
By assumption, there exist a constant $C>0$ and $N\in \mathbb{N}$ such that
$$ |\partial_{\mathrm{k}}f_{0}(x) |\leq C, \quad x\in \Omega_{0}, |\mathrm{k}|\geq N.$$
As a consequence, both the series
$$\sum_{l=N}^{+\infty} \frac{r^{2l}}{(2l)!}  (-\Delta_{x})^{l} f_{0}(x), \ \ \ \  \sum_{l=N}^{+\infty} \frac{r^{2l+1}}{(2l+1)!}(-\Delta_{x})^{l}  D_{x}f_{0}(x)$$  are  absolutely convergent on  $\Omega_{0}$ for all $r\geq0$.
Hence, by Remark \ref{Cauchy-Kovalevska-extension-odd-even}, the series \eqref{CKCC} defining $CK[f_{0}]$ is convergent on the whole $\Omega_{0}^{\ast}$. The proof is complete.
\end{proof}

Let us denote by  $\mathcal{S}(\mathbb{R}^{p+1})$   the space of real-valued  Schwartz functions on $\mathbb{R}^{p+1}$.  We have the following consequence of the previous result:
\begin{proposition}\label{Ck-entire-condition-Schwarz}
If $f_{0}\in \mathcal{S}(\mathbb{R}^{p+1})\otimes \mathbb{C}_{p+q}$, then  $CK[f_{0}]$  is the unique extension of $f_{0}$ to  the whole space  $\mathbb{R}^{p+q+1}$ preserving generalized partial-slice monogenicity.
\end{proposition}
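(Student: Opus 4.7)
The plan is to split the proof into uniqueness and existence.

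For \emph{uniqueness}, observe that $\R^{p+q+1}$ is trivially a p-symmetric slice domain in the sense of Definition \ref{slice-domain}: it meets $\R^{p+1}$, and each slice $(\R^{p+q+1})_{\underline{\omega}} = \R^{p+1} \oplus \underline{\omega}\R$ coincides with $\R^{p+2}$. If $F_1, F_2 \in \mathcal{GSM}(\R^{p+q+1})$ both restrict to $f_0$ on $\R^{p+1}$, then their difference is generalized partial-slice monogenic and vanishes on $\R^{p+1}$, which is a $(p+1)$-dimensional smooth submanifold of $(\R^{p+q+1})_{\underline{\omega}} = \R^{p+2}$ for any fixed $\underline{\omega} \in \mathbb{S}$. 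Theorem \ref{Identity-theorem} then forces $F_1 \equiv F_2$ on $\R^{p+q+1}$.

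For \emph{existence}, I would apply Proposition \ref{Ck-entire-condition} with $\Omega_{0} = \R^{p+1}$, so that $\Omega_{0}^{\ast} = \R^{p+q+1}$. Since $f_0 \in \mathcal{S}(\R^{p+1}) \otimes \mathbb{C}_{p+q}$, every partial derivative $\partial_{\mathrm{k}} f_0$ is itself Schwartz and in particular bounded on $\R^{p+1}$; the estimates underlying Proposition \ref{Ck-entire-condition} (via the decomposition of Remark \ref{Cauchy-Kovalevska-extension-odd-even}) then show that the series defining $CK[f_0](\bx)$ converges absolutely on all of $\R^{p+q+1}$, so $CK[f_0] \in \mathcal{GSM}(\R^{p+q+1})$ extends $f_0$. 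Combined with the first step, $CK[f_0]$ is the unique such extension.

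The main obstacle is the uniform-in-$\mathrm{k}$ control of derivatives required by Proposition \ref{Ck-entire-condition}: a generic Schwartz function only satisfies pointwise bounds $|\partial_{\mathrm{k}} f_0(x)| \leq M_{\mathrm{k}}$ with $M_{\mathrm{k}}$ possibly growing in $|\mathrm{k}|$. The cleanest workaround is a Fourier estimate $\|\partial_{\mathrm{k}} f_0\|_{\infty} \leq \int_{\R^{p+1}} |\xi|^{|\mathrm{k}|} |\hat{f}_0(\xi)|\, d\xi$, which combined with the rapid decay of $\hat{f}_0$ gives absolute convergence of $\sum_l \frac{r^{2l}}{(2l)!}(-\Delta_x)^l f_0(x)$ (and of its $D_{x}$-twisted partner) on the whole space. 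Alternatively, one may directly represent the extension as the Fourier integral $CK[f_0](\bx) = (2\pi)^{-(p+1)/2}\int_{\R^{p+1}} \hat{f}_0(\xi)\, CK[e^{i\langle \cdot, \xi\rangle}](\bx)\, d\xi$, using Example \ref{example-polynomial} and the algebraic identity $(\underline{y}\xi)^2 = -|\underline{y}|^2|\xi|^2$ (a consequence of the orthogonality of $\underline{y}$ and the paravector $\xi$ in $\mathbb{R}_{p+q}$) to put the mode $CK[e^{i\langle \cdot, \xi\rangle}](\bx)$ in closed form, and then differentiating under the integral sign to verify generalized partial-slice monogenicity.
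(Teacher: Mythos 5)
Your uniqueness argument coincides with the paper's: $\mathbb{R}^{p+q+1}$ is a p-symmetric slice domain, $\mathbb{R}^{p+1}$ is a $(p+1)$-dimensional smooth manifold inside each slice $\mathbb{R}^{p+2}$, and Theorem \ref{Identity-theorem} forces two extensions to agree. That half is correct and is exactly what the paper does.

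The existence half contains a genuine gap, and to your credit you located it precisely: Proposition \ref{Ck-entire-condition} demands a \emph{single} constant bounding $\partial_{\mathrm{k}}f_0$ for all large $|\mathrm{k}|$, whereas a generic Schwartz function only yields $\mathrm{k}$-dependent bounds $M_{\mathrm{k}}$ that may grow with $|\mathrm{k}|$ (the paper dismisses this with ``it is immediate''). The problem is that neither of your proposed repairs closes the gap. Feeding the bound $\|\partial_{\mathrm{k}}f_0\|_\infty\le\int_{\mathbb{R}^{p+1}}|\xi|^{|\mathrm{k}|}|\widehat{f_0}(\xi)|\,d\xi$ into the series of Remark \ref{Cauchy-Kovalevska-extension-odd-even} and summing over $l$ produces the majorant $\int_{\mathbb{R}^{p+1}}\cosh(r|\xi|)\,|\widehat{f_0}(\xi)|\,d\xi$, and the same integral governs your mode-decomposition variant, since $|e(\bx,\xi)|$ grows like $\cosh(|\underline{y}||\xi|)$ by \eqref{Representation-e}. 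Super-polynomial decay of $\widehat{f_0}$ does not make this finite; one needs $\widehat{f_0}$ to decay faster than $e^{-r|\xi|}$ for every $r>0$, a far smaller class than $\mathcal{S}(\mathbb{R}^{p+1})$. Moreover the failure is not an artifact of the estimate: take $f_0$ a nonzero compactly supported smooth bump, which is Schwartz but not real-analytic. Any $F\in\mathcal{GSM}(\mathbb{R}^{p+q+1})$ restricting to $f_0$ would vanish on the nonempty open subset of $\mathbb{R}^{p+1}$ where $f_0$ vanishes, hence would vanish identically by the very Identity Theorem you use for uniqueness; so no extension exists at all. The statement therefore needs a stronger hypothesis than membership in $\mathcal{S}(\mathbb{R}^{p+1})\otimes\mathbb{C}_{p+q}$ --- for instance real analyticity with globally convergent Taylor series as in Corollary \ref{slice-Cauchy-Kovalevsky-extension-entire}, or super-exponential decay of $\widehat{f_0}$ --- and your argument, as written, cannot be completed at the stated level of generality.
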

\begin{proof}
It is immediate that the function class $\mathcal{S}(\mathbb{R}^{p+1})\otimes \mathbb{C}_{p+q}$ satisfies the condition in Proposition \ref{Ck-entire-condition}.  Hence, by Theorem    \ref{Identity-theorem} we get the assertion.
\end{proof}

\begin{proposition}\label{example-e}
For each $\xi\in \mathbb{R}^{p+1}$, define the left generalized partial-slice   monogenic functions in $\bx\in \mathbb{R}^{p+q+1}$ as
$$e(\bx, \xi):=CK  [e^{i\langle \cdot , \xi \rangle}](\bx).$$
Then $e(\bx, \xi)$ has the following  expression
\begin{equation}\label{Representation-e}
e(\bx, \xi)=\big(\cosh(|\underline{y}| |\xi|)+ i \frac{\underline{y}}{|\underline{y}|}\frac{\xi}{|\xi|} \sinh(|\underline{y}| |\xi|)\big)e^{i\langle x, \xi \rangle},  \quad \bx=x+\underline{y}.
\end{equation}
\end{proposition}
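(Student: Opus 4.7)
The plan is to apply the even/odd decomposition of the CK-extension given in Remark \ref{Cauchy-Kovalevska-extension-odd-even}, which reduces the problem to iteratively applying $-\Delta_{x}$ and $D_{x}$ to the scalar-valued function $f_{0}(x)=e^{i\langle x,\xi\rangle}$. Since $e^{i\langle x,\xi\rangle}$ takes values in $\mathbb{C}$, it commutes with every element of $\mathbb{C}_{p+q}$, so a direct termwise differentiation yields
\[
D_{x}\,e^{i\langle x,\xi\rangle}=i\xi\,e^{i\langle x,\xi\rangle}, \qquad (-\Delta_{x})^{k}\,e^{i\langle x,\xi\rangle}=|\xi|^{2k}\,e^{i\langle x,\xi\rangle}, \quad k\in\mathbb{N}_{0},
\]
using $\xi=\sum_{j=0}^{p}\xi_{j}e_{j}$ and $|\xi|^{2}=\sum_{j=0}^{p}\xi_{j}^{2}$.

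I would then substitute these identities into the formula of Remark \ref{Cauchy-Kovalevska-extension-odd-even}, writing $\underline{y}=r\underline{\omega}$ with $r=|\underline{y}|$ and $\underline{\omega}\in\mathbb{S}$, to obtain
\begin{align*}
CK[e^{i\langle \cdot,\xi\rangle}](\bx)
&= \sum_{k=0}^{\infty}\frac{(r|\xi|)^{2k}}{(2k)!}\, e^{i\langle x,\xi\rangle}
+ \underline{\omega}\, i\xi \sum_{k=0}^{\infty}\frac{r^{2k+1}|\xi|^{2k}}{(2k+1)!}\, e^{i\langle x,\xi\rangle}.
\end{align*}
Pulling $1/|\xi|$ out of the second sum lets me recognize the Taylor expansions of $\cosh(r|\xi|)$ and $\sinh(r|\xi|)$; restoring $r=|\underline{y}|$ and $\underline{\omega}=\underline{y}/|\underline{y}|$ then yields exactly \eqref{Representation-e}.

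The only subtlety worth monitoring is the convergence and the legality of pulling $\underline{\omega}D_{x}$ inside the sum in Remark \ref{Cauchy-Kovalevska-extension-odd-even}. Both series above are $\cosh$/$\sinh$ series evaluated at the real argument $|\underline{y}||\xi|$, hence absolutely convergent on all of $\mathbb{R}^{p+q+1}$, which makes these manipulations rigorous. The apparent singularity at $\underline{y}=0$ is removable, since $\sinh(|\underline{y}||\xi|)$ vanishes linearly at the origin and exactly cancels the denominator $|\underline{y}|$ appearing in $\underline{y}/|\underline{y}|$. An alternative and slightly more direct route would be to expand $\exp(\underline{y}D_{x})$ and prove by induction the identity $(\underline{y}D_{x})^{k}e^{i\langle x,\xi\rangle}=(i\underline{y}\xi)^{k}e^{i\langle x,\xi\rangle}$, then use $(i\underline{y}\xi)^{2}=|\underline{y}|^{2}|\xi|^{2}$, which follows from $\underline{\omega}^{2}=-1$ together with the anticommutation of $\underline{\omega}$ with each $e_{j}$, $1\le j\le p$; this works equally well but requires a little more Clifford-algebra bookkeeping, so I prefer the decomposition approach.
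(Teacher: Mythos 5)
Your proposal is correct and follows essentially the same route as the paper's own proof: both invoke the even/odd decomposition of Remark \ref{Cauchy-Kovalevska-extension-odd-even}, use the identities $\Delta_{x}e^{i\langle x,\xi\rangle}=-|\xi|^{2}e^{i\langle x,\xi\rangle}$ and $D_{x}e^{i\langle x,\xi\rangle}=i\xi e^{i\langle x,\xi\rangle}$, and resum into $\cosh$ and $\sinh$. Your added remarks on convergence (handled in the paper via Proposition \ref{Ck-entire-condition}) and on the removable singularity at $\underline{y}=0$ are sound but not a departure from the paper's argument.
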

\begin{proof}
First, note that  $e(\cdot, \cdot)$ is well defined for $ \mathbb{R}^{p+q+1}\times \mathbb{R}^{p+1}$ from Proposition \ref{Ck-entire-condition}.

Let $\bx=x+\underline{y}$ with $\underline{y}=r\underline{\omega}, r=|\underline{y}|$.
From the basic formulas
 $$\Delta_{x} e^{i\langle x, \xi \rangle}=-|\xi|^{2}e^{i\langle x, \xi \rangle},\ \ \  D_{x}e^{i\langle x, \xi \rangle}=i\xi e^{i\langle x, \xi \rangle},$$
 we have by Remark \ref{Cauchy-Kovalevska-extension-odd-even}
\begin{eqnarray*}
  CK  [e^{i\langle x, \xi \rangle}](\bx)
 &=&\sum_{k=0}^{+\infty} \frac{r^{2k}}{(2k)!}  (-\Delta_{x})^{k} e^{i\langle x, \xi \rangle}
+\underline{\omega} D_{x} \sum_{k=0}^{+\infty} \frac{r^{2k+1}}{(2k+1)!}(-\Delta_{x}) ^{k} e^{i\langle x, \xi \rangle}
 \\
&=&\sum_{k=0}^{+\infty} \frac{(r|\xi|)^{2k}}{(2k)!}   e^{i\langle x, \xi \rangle}
+\underline{\omega} D_{x} \sum_{k=0}^{+\infty} \frac{r^{2k+1}|\xi|^{2k}}{(2k+1)!}e^{i\langle x, \xi \rangle}
\\
&=&\sum_{k=0}^{+\infty} \frac{(r|\xi|)^{2k}}{(2k)!}   e^{i\langle x, \xi \rangle}
+ i \underline{\omega}\xi \sum_{k=0}^{+\infty} \frac{r^{2k+1}}{(2k+1)!}( |\xi|) ^{2k}e^{i\langle x, \xi \rangle}
\\
&=&\Big(\sum_{k=0}^{+\infty} \frac{(r|\xi|)^{2k}}{(2k)!}
+ i \underline{\omega}\frac{\xi}{|\xi|} \sum_{k=0}^{+\infty} \frac{( r|\xi|)^{2k+1}}{(2k+1)!} \Big)e^{i\langle x, \xi \rangle}
\\
&=&\big(\cosh(r|\xi|)+ i \underline{\omega}\frac{\xi}{|\xi|} \sinh(r|\xi|) \big)e^{i\langle x, \xi \rangle},
\end{eqnarray*}
which completes the proof.
\end{proof}

\begin{proposition}\label{proposition-e}
The function $e(\bx, \xi)$ satisfies the basic identities:
\begin{equation}\label{proposition-e-1}
e(\bx, \xi)=e(x, \xi) e(\underline{y}, \xi)= e(\underline{y}, \xi)e(x, \xi), \end{equation}
\begin{equation}\label{proposition-e-2}
e(\bx, \xi) ^{\dag}=e(-x, \xi) e(\underline{y}, \xi)=e(x, -\xi) e(\underline{y}, \xi),\end{equation}
\begin{equation}\label{proposition-e-3}
e(\bx, \xi) ^{\dag}e(\bx, \xi)=e( \underline{y}, \xi) ^{\dag}e( \underline{y}, \xi) =e(2 \underline{y}, \xi)=  e( \underline{y},2 \xi),\end{equation}
and
\begin{equation}\label{proposition-e-4}
(e(\bx, \xi) )^{k}= e(k\bx, \xi)  =  e(\bx,k\xi), \quad k \in \mathbb{N}.\end{equation}
\end{proposition}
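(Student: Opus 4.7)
My plan is to derive all four identities as routine consequences of the explicit formula \eqref{Representation-e} from Proposition \ref{example-e}. Writing $t:=|\underline y||\xi|$ and $J:=i\frac{\underline y}{|\underline y|}\frac{\xi}{|\xi|}$, that formula reads $e(\bx,\xi)=(\cosh t+J\sinh t)\,e^{i\langle x,\xi\rangle}$. Two observations control everything: (a) the complex scalar $e^{i\langle x,\xi\rangle}$ commutes with every element of $\mathbb{C}_{p+q}$, and (b) specialising $\underline y=0$ or $x=0$ in \eqref{Representation-e} gives, respectively, $e(x,\xi)=e^{i\langle x,\xi\rangle}$ and $e(\underline y,\xi)=\cosh t + J\sinh t$. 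Identity \eqref{proposition-e-1} is then an immediate factorisation.

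For \eqref{proposition-e-2} I compute $(e(\bx,\xi))^{\dag}$ using the anti-involution rule $(\alpha\beta)^{\dag}=\beta^{\dag}\alpha^{\dag}$. Since $\langle x,\xi\rangle\in\mathbb{R}$, one has $(e^{i\langle x,\xi\rangle})^{\dag}=e^{-i\langle x,\xi\rangle}$. The hyperbolic factor is self-conjugate under $\dag$ provided $\overline{\underline y\,\xi}=-\underline y\,\xi$: writing $\xi=\xi_0+\underline\xi$ with $\underline\xi\in\mathbb{R}^p$, this follows from $\overline{\underline y}=-\underline y$, $\overline{\underline\xi}=-\underline\xi$, and the anticommutation $\underline y\,\underline\xi=-\underline\xi\,\underline y$ (every $e_j$, $j\ge p+1$, anticommutes with every $e_i$, $i\le p$). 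Recognising $e^{-i\langle x,\xi\rangle}$ both as $e(-x,\xi)$ and as $e(x,-\xi)$ then completes \eqref{proposition-e-2}.

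For \eqref{proposition-e-3} and \eqref{proposition-e-4} the decisive algebraic fact is $(\underline y\,\xi)^2=-|\underline y|^2|\xi|^2$, obtained by expanding $\underline y\,\xi=\underline y\,\xi_0+\underline y\,\underline\xi$ and using $\underline y^2=-|\underline y|^2$ together with the anticommutation above (the mixed terms cancel). This forces $J^2=+1$, and a one-line induction on $k$ based on the hyperbolic addition formulas yields $(\cosh t+J\sinh t)^k=\cosh(kt)+J\sinh(kt)$ for every $k\in\mathbb{N}$. From here both identities are bookkeeping: \eqref{proposition-e-3} follows because $e(\bx,\xi)^{\dag}e(\bx,\xi)$ collapses to $(\cosh t+J\sinh t)^2=\cosh(2t)+J\sinh(2t)$, and direct substitution in \eqref{Representation-e} (noting $2\underline y/|2\underline y|=\underline y/|\underline y|$, and similarly for $2\xi/|2\xi|$) identifies this with $e(\underline y,\xi)^{\dag}e(\underline y,\xi)$, $e(2\underline y,\xi)$ and $e(\underline y,2\xi)$. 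Identity \eqref{proposition-e-4} follows in the same spirit from $(e(\bx,\xi))^k=e^{ik\langle x,\xi\rangle}(\cosh(kt)+J\sinh(kt))$ combined with \eqref{proposition-e-1}.

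The only step that is not pure bookkeeping is the Clifford-algebra verification of the two relations $\overline{\underline y\,\xi}=-\underline y\,\xi$ and $(\underline y\,\xi)^2=-|\underline y|^2|\xi|^2$; once these are in hand the rest reduces to the commutativity of the complex scalar $e^{i\langle x,\xi\rangle}$ with $\mathbb{C}_{p+q}$ and the ``square root of unity'' identity $J^2=+1$, which collectively turn the four claims into elementary manipulations with hyperbolic functions.
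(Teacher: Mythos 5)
Your proof is correct and follows essentially the same route as the paper: both reduce everything to the factorisation $e(\bx,\xi)=e^{i\langle x,\xi\rangle}e(\underline y,\xi)$ and to the two Clifford-algebra identities $(i\underline y\,\xi)^{\dag}=i\underline y\,\xi$ and $(i\underline y\,\xi)^{2}=|\underline y|^{2}|\xi|^{2}$, which are exactly your relations $\overline{\underline y\,\xi}=-\underline y\,\xi$ and $(\underline y\,\xi)^{2}=-|\underline y|^{2}|\xi|^{2}$. The only difference is that you spell out the anticommutation check and the induction giving $(\cosh t+J\sinh t)^{k}=\cosh(kt)+J\sinh(kt)$, which the paper leaves implicit.
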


\begin{proof}
From the definition in formula \eqref{Representation-e}, we immediately have that
$$e(x,\xi)= e^{i\langle x,\xi\rangle},$$
$$e(\underline{y},\xi)= \cosh(|\underline{y}| |\xi|)+ i \frac{\underline{y}}{|\underline{y}|}\frac{\xi}{|\xi|} \sinh(|\underline{y}| |\xi|)\big),
$$
and so we obtain (\ref{proposition-e-1}).  Then (\ref{proposition-e-1}) implies the validity of (\ref{proposition-e-2}), (\ref{proposition-e-3}), (\ref{proposition-e-4}) if we can prove the following two formulas
\begin{equation}\label{proposition-e-proof-1}
e( \underline{y}, \xi) ^{\dag}=e( \underline{y}, \xi),\end{equation}
and
\begin{equation}\label{proposition-e-proof-2}
e( \underline{y}, \xi)^{2}=e(2 \underline{y}, \xi)=  e( \underline{y},2 \xi).\end{equation}
In fact, observing that
$$(i  \underline{y} \xi)^{\dag}=-i\overline{\xi}\overline{ \underline{y}} = i\overline{\xi}  \underline{y} = i  \underline{y} \xi,  $$
we can get (\ref{proposition-e-proof-1})
and
$$(i  \underline{y} \xi)^{\dag}i  \underline{y} \xi = (i  \underline{y} \xi)^{2}=(  \underline{y} \xi)^{\dag}  \underline{y} \xi=\overline{ \xi} \overline{\underline{y} } \ \underline{y} \xi = |\underline{y}|^{2} |\xi|^{2}, $$
which implies (\ref{proposition-e-proof-2}).
The proof is complete.
\end{proof}

\section{CK-extension to $\mathbb{R}^{p+q+1}$}
\label{Sect4}

In this section, we continue  discussing  the   CK-extension in the case the functions can be extended to the whole $\mathbb{R}^{p+q+1}$, and so they extend as entire functions. In particular, we shall show that all real analytic functions in $\mathbb{R}^{p+1}$ with  Taylor
expansion  convergent as  a multiple power series in $\mathbb{R}^{p+1} $  possess  CK-extension to whole space $\mathbb{R}^{p+q+1}$; see Theorem \ref{slice-Cauchy-Kovalevsky-extension-entire} below. This result is crucial to  study the Segal-Bargmann transform for generalized partial-slice monogenic functions.
 To this end, we first recall Fueter polynomials  and  Taylor series for generalized partial-slice monogenic functions, see \cite{Xu-Sabadini}.

Consider a multi-index $\mathrm{k}=(k_0,k_1,\ldots,k_{p})\in \mathbb{N}_{0}^{p+1}$, set  $k=|\mathrm{k}|=k_0+k_1+\cdots+k_{p}$, and $\mathrm{k}!=k_0 !k_1 !\cdots k_{p}!$. Let  $\bx'=(x_0,x_1,\ldots,x_p,r) \in \mathbb{R}^{p+2}$, and set $\bx'_{\diamond}=(x_0,x_1,\ldots,x_p,-r)$. Let $\underline{\eta} \in \mathbb{S}$ be arbitrary but fixed.  The so-called Fueter variables are defined as
   \begin{equation}\label{Fueter-variables}
z_{\ell}=  x_{\ell}+r \underline{\eta} e_\ell, \ \ell=0,1,\ldots,p.
\end{equation}
An immediate calculation shows that for any $\underline{\eta}\in\mathbb S$ we have
$$(\sum_{i=0}^{p}e_i\partial_{x_i}+\underline{\eta}\partial_{r}) z_{\ell}= e_\ell+\underline{\eta}^2e_\ell=0,  \ \ell=0,1,\ldots,p,
$$
and so the variable $z_\ell$ is  generalized partial-slice monogenic.
\begin{definition}\label{definition-Fueter}
For  $\mathrm{k}=(k_0,k_1,\ldots,k_{p})\in \mathbb{N}_{0}^{p+1}$, let   $(j_1,j_2,\ldots, j_k) \in \{0,1,\ldots,p\}^{k}$   be an alignment with the number of $0$ in the alignment is $k_0$, the number of $1$ is $k_1$ and the number of $p$ is $k_{p}$, where $k=|\mathrm{k}|$.
For $\underline{\eta} \in \mathbb{S}$, define
$$\mathcal{P}_{ \underline{\eta},\mathrm{k}}  ( \bx')=\frac{1}{k!}\sum_{ \sigma} z_{\sigma(j_1)} z_{\sigma(j_2)}\cdots z_{\sigma(j_k)},$$
where the sum is computed over the $\dfrac{k!}{\mathrm{k}!}$ different permutations $\sigma$ of $(j_1,j_2,\ldots, j_k)$. When $\mathrm{k}=(0,\ldots,0)=0$, we set $\mathcal{P}_{ \underline{\eta},0}(\bx')=1$.

Similarly, we can define $\mathcal{P}^{R}_{ \underline{\eta},\mathrm{k}}  ( \bx')$ when $z_{\ell}$ are replaced by $z_{\ell}^{R}$.
\end{definition}
Note that for $p=0$, $\mathrm{k} =k\in \mathbb{N}_0$,
$\mathcal{P}_{ \underline{\eta},\mathrm{k}}  ( \bx')=\mathcal{P}_{ \underline{\eta},k}  (x_0,r)=\frac{1}{k!} (x_{0}+r \underline{\eta})^{k}$.

Definition \ref{definition-Fueter} allows to establish Taylor series for generalized partial-slice monogenic function, see \cite{Xu-Sabadini} for the proof.
\begin{theorem}[Taylor series]\label{Taylor-theorem}
Let $f: B=B(0,\rho)\rightarrow \mathbb{R}_{p+q}$ be a generalized partial-slice monogenic function. For  any $\bx\in B$, we have
$$f(\bx)=  \sum_{k=0}^{+\infty} \sum_{|\mathrm{k}|=k} \mathcal{P}_{\mathrm{k}}(\bx)  \partial_{\mathrm{k}} f (0),  $$
where the series converges uniformly on compact subsets of $B$.
\end{theorem}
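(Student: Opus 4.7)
The plan is to reduce the expansion to the classical Fueter--Taylor theorem for monogenic functions, applied on each slice, and then piece the slice--wise expansions together by exploiting the uniqueness of the CK--extension.

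First I would fix an arbitrary $\underline{\omega}\in\mathbb S$ and consider the restriction $f_{\underline{\omega}}$ on the $(p+2)$-dimensional ball $B_{\underline{\omega}}=B\cap(\mathbb R^{p+1}\oplus\underline{\omega}\mathbb R)$. Lemma \ref{Splitting-lemma} decomposes $f_{\underline{\omega}}$ into finitely many $\mathbb C_{p+1}$-valued functions each monogenic with respect to $D_x+\underline{\omega}\partial_r$, so the classical Taylor theorem for monogenic functions on a ball in $\mathbb R^{p+2}$ applies componentwise. Writing $\bx=x+r\underline{\omega}$ and $\bx'=(x_0,\ldots,x_p,r)$ and summing, this yields
\begin{equation*}
f_{\underline{\omega}}(\bx)=\sum_{k=0}^{+\infty}\sum_{|\mathrm k|=k}\mathcal{P}_{\underline{\omega},\mathrm k}(\bx')\,c_{\mathrm k,\underline{\omega}},
\end{equation*}
with uniform convergence on compact subsets of $B_{\underline{\omega}}$.

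Next I would identify the coefficients. When $r=0$, all Fueter variables $z_\ell$ collapse to the real scalars $x_\ell$, and a direct counting of the $k!/\mathrm k!$ coinciding permutations in Definition \ref{definition-Fueter} gives $\mathcal{P}_{\underline{\omega},\mathrm k}(x,0)=x^{\mathrm k}/\mathrm k!$. Setting $r=0$ in the displayed identity therefore recovers the usual multivariate real-analytic Taylor expansion of $f\vert_{\mathbb R^{p+1}\cap B}$ at the origin, whence $c_{\mathrm k,\underline{\omega}}=\partial_{\mathrm k}f(0)$; since only derivatives in the $\mathbb R^{p+1}$-directions appear, this coefficient is independent of $\underline{\omega}$. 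On the polynomial side, $\mathcal{P}_{\underline{\omega},\mathrm k}(\bx')$ viewed as a function of $\bx$ is generalized partial-slice monogenic and reduces to $x^{\mathrm k}/\mathrm k!$ on $\mathbb R^{p+1}$, so by the uniqueness of the CK-extension (Theorem \ref{slice-Cauchy-Kovalevsky-extension}) it must equal $CK[x^{\mathrm k}/\mathrm k!](\bx)$, an object intrinsic to $\bx$. Setting $\mathcal{P}_{\mathrm k}(\bx):=\mathcal{P}_{\underline{\omega},\mathrm k}(\bx')$ thus produces a well-defined function on $B$, and since $B=\bigcup_{\underline{\omega}\in\mathbb S}B_{\underline{\omega}}$ the slice-wise expansions patch together into the desired formula pointwise on $B$.

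For the uniform convergence on an arbitrary compact $K\subset B$, I would pick radii $\rho_0<\rho_1<\rho$ with $K\subset\overline{B(0,\rho_0)}\subset B(0,\rho_1)$ and put $M=\sup_{\overline{B(0,\rho_1)}}|f|$. Standard Cauchy-type estimates for monogenic functions applied on any slice through $0$ yield bounds of the form $|\partial_{\mathrm k}f(0)|\le C\,M\,k!/(\mathrm k!\,\rho_1^{k})$, while a direct estimate of the symmetrized product of Fueter variables, combined with Proposition \ref{norm-product}, gives $|\mathcal{P}_{\mathrm k}(\bx)|\le C'\,\rho_0^{k}\,\mathrm k!/k!$ on $\overline{B(0,\rho_0)}$. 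Both bounds are independent of $\underline{\omega}$, so the full double series is dominated, up to a polynomial factor in $k$, by a convergent geometric series with ratio $\rho_0/\rho_1<1$. The step I expect to be the main obstacle is precisely this passage from slice-wise convergence to uniform convergence on all of $B$: it hinges on the Cauchy bound for $\partial_{\mathrm k}f(0)$ being taken with a constant independent of the slice, which is possible because $M$ is an ambient sup-norm depending only on $|\bx|$.
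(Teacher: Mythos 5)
The paper does not actually prove Theorem \ref{Taylor-theorem}; it is imported from \cite{Xu-Sabadini}, so there is no in-paper argument to measure yours against. Your route --- restrict to a slice, split via Lemma \ref{Splitting-lemma}, invoke the classical monogenic Taylor expansion on each slice, identify the coefficients by setting $r=0$, and patch using the well-definedness of $\mathcal{P}_{\mathrm{k}}$ --- is the natural one and is essentially sound, but two steps need more justification than you give. First, the classical Fueter--Taylor theorem is stated for $\partial_{x_0}+\sum_i e_i\partial_{x_i}$ with expansion off the hyperplane $x_0=0$ in the variables $x_j-x_0e_j$, whereas you need the expansion off the hyperplane $r=0$ in the modified variables $z_\ell=x_\ell+r\underline{\omega}e_\ell$. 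This does follow, but only after left-multiplying $D_x+\underline{\omega}\partial_r$ by $-\underline{\omega}$ to bring it to the standard form $\partial_r+\sum_{\ell=0}^{p}\epsilon_\ell\partial_{x_\ell}$ with $\epsilon_\ell=-\underline{\omega}e_\ell$ (which pairwise anticommute and square to $-1$), whose Fueter variables are exactly $x_\ell-r\epsilon_\ell=z_\ell$; without this reduction the expansion you quote would be in the wrong polynomials, and the paper's own closing remark shows the classical variables would not even give well-defined $\mathcal{P}_{\mathrm{k}}(\bx)$. Second, your estimate $|\mathcal{P}_{\mathrm{k}}(\bx)|\le C'\rho_0^{k}\,\mathrm{k}!/k!$ is asserted without proof and is not what Proposition \ref{estimate-Fueter} gives: the triangle inequality together with $z_\ell\overline{z_\ell}=|z_\ell|^2$ and Proposition \ref{norm-product} yields only $|\mathcal{P}_{\mathrm{k}}(\bx)|\le\rho_0^{k}/\mathrm{k}!$. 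Your sharper-looking bound can in fact be salvaged, since $\sup_{\mathrm{k}}\,k!/(\mathrm{k}!)^2=:C_p<\infty$ for fixed $p$, so that $\rho_0^{k}/\mathrm{k}!\le C_p\,\rho_0^{k}\,\mathrm{k}!/k!$; with that observation the product against the Cauchy estimate $|\partial_{\mathrm{k}}f(0)|\le C M (k!/\mathrm{k}!)\rho_1^{-k}$ is indeed dominated by a convergent series on the full ball, as you claim. But as written, the one inequality that makes the geometric ratio $\rho_0/\rho_1$ appear without a dimension-dependent loss of radius is precisely the unproved one, and it is the crux of obtaining convergence on all of $B(0,\rho)$ rather than on a strictly smaller ball, so you must supply it.
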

We also recall the next result from \cite{Xu-Sabadini}:
\begin{proposition} \label{Ck-P}
Let $\mathrm{k}=(k_0,k_1,\ldots,k_{p})\in \mathbb{N}_{0}^{p+1}$ and $x^{\mathrm k}:=x_0^{k_0}\ldots x_p^{k_p}$, then
  $CK[x^{\mathrm{k}}](\bx)= \mathrm{k}!\mathcal{P}_{ \mathrm{k}}(\bx).$     In particular, $z_{\ell}=  CK[x_{\ell}](x+\underline{\eta} r), \ \ell=0,1,\ldots,p.$
\end{proposition}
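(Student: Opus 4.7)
The plan is to avoid a direct evaluation of the series $\sum_l \tfrac{1}{l!}(\underline{y}\,D_x)^l x^{\mathrm k}$ and instead invoke the uniqueness of the CK-extension. Since the monomial $x^{\mathrm k}$ is real analytic on $\mathbb R^{p+1}$ and the candidate $\mathrm k!\,\mathcal P_{\mathrm k}$ is a polynomial (hence entire on $\mathbb R^{p+q+1}$), Theorem \ref{slice-Cauchy-Kovalevsky-extension} together with the Identity Principle (Theorem \ref{Identity-theorem}) guarantees that $CK[x^{\mathrm k}]$ is the unique generalized partial-slice monogenic function on $\mathbb R^{p+q+1}$ whose restriction to $\mathbb R^{p+1}$ equals $x^{\mathrm k}$. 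It therefore suffices to verify two properties of $\mathrm k!\,\mathcal P_{\mathrm k}$: (i) it is generalized partial-slice monogenic, and (ii) it restricts to $x^{\mathrm k}$ on $\mathbb R^{p+1}$.

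I would dispatch (ii) first, as it is essentially combinatorial. Setting $r=0$ in Definition \ref{definition-Fueter} collapses each Fueter variable $z_\ell = x_\ell + r\underline{\eta}\,e_\ell$ to the real number $x_\ell$. Since the $x_\ell$ mutually commute, each of the $k!/\mathrm k!$ distinct permutations $\sigma$ of $(j_1,\ldots,j_k)$ contributes the same product $x_{j_1}\cdots x_{j_k} = x^{\mathrm k}$, so
\[
\mathcal P_{\underline{\eta},\mathrm k}(x,0)\;=\;\frac{1}{k!}\cdot\frac{k!}{\mathrm k!}\,x^{\mathrm k}\;=\;\frac{x^{\mathrm k}}{\mathrm k!},
\]
whence $(\mathrm k!\,\mathcal P_{\mathrm k})|_{\mathbb R^{p+1}} = x^{\mathrm k}$, as desired.

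For (i), I would invoke the fact---established in \cite{Xu-Sabadini} as a key step toward Theorem \ref{Taylor-theorem}---that every Fueter polynomial $\mathcal P_{\underline{\eta},\mathrm k}$ lies in the kernel of $D_{\underline{\eta}}$ on each slice and thus defines a generalized partial-slice monogenic function on $\mathbb R^{p+q+1}$. Combining (i) with (ii), the uniqueness of the CK-extension yields $CK[x^{\mathrm k}] = \mathrm k!\,\mathcal P_{\mathrm k}$. The particular case $CK[x_\ell] = z_\ell$ then follows by specializing $\mathrm k$ to the $\ell$-th standard basis multi-index, for which $\mathrm k! = 1$ and $\mathcal P_{\underline{\eta},\mathrm k} = z_\ell$. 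I expect the only genuinely delicate point to be the monogenicity of the Fueter polynomials themselves: the $z_\ell$ do not mutually commute and $D_{\underline{\eta}}$ does not satisfy a clean Leibniz rule across noncommuting factors, so a first-principles verification relies on a careful symmetrization argument. Since this has already been carried out in the cited reference, no additional work is required here.
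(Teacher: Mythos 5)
Your argument is correct, but note that the paper itself gives no proof of this proposition: it is recalled verbatim from \cite{Xu-Sabadini}, where (as with Example \ref{example-polynomial} in this paper) the natural proof is a direct computation of $(\underline{y}D_x)^l x^{\mathrm k}$ by induction on $l$, matching the result term by term against the symmetrized products defining $\mathcal P_{\mathrm k}$. Your route is genuinely different: you sidestep the combinatorics entirely by observing that both $CK[x^{\mathrm k}]$ (a terminating series, hence a GSM polynomial on all of $\mathbb R^{p+q+1}$) and $\mathrm k!\,\mathcal P_{\mathrm k}$ are generalized partial-slice monogenic entire functions restricting to $x^{\mathrm k}$ on $\mathbb R^{p+1}$, and then invoking Theorem \ref{Identity-theorem} ($\mathbb R^{p+1}$ is a $(p+1)$-dimensional smooth manifold lying in every slice $\Omega_{\underline\omega}$). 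Your computation of the restriction at $r=0$ is right, and it also confirms that $\mathcal P_{\underline\eta,\mathrm k}$ is well posed there independently of $\underline\eta$, a point the authors themselves flag. What the direct computation buys is self-containedness and an explicit formula for each $(\underline y D_x)^l x^{\mathrm k}$; what your argument buys is brevity, at the price of importing from \cite{Xu-Sabadini} the monogenicity of the Fueter polynomials. That import is legitimate and not circular, provided (as is the case) the monogenicity of the symmetrized products is established there by a direct symmetrization argument rather than by identifying $\mathcal P_{\mathrm k}$ with $CK[x^{\mathrm k}]/\mathrm k!$; it is in any event a prerequisite for Theorem \ref{Taylor-theorem}, which the paper states independently of this proposition. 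You were right to single that step out as the only delicate point.
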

The following two results can be easily obtained and will be useful in the sequel.
 \begin{proposition}\label{estimate-Fueter}
 It holds that
$$|\mathcal{P}_{ \underline{\eta},\mathrm{k}}  ( \bx')|\leq \frac{1}{k!}\sum_{ \sigma} |z_{\sigma(j_1)}|| z_{\sigma(j_2)}| \cdots |z_{\sigma(j_k)} | \leq   \dfrac{  1}{\mathrm{k}!} |z_0|^{k_0}  \cdots |z_p|^{k_p}.$$
  \end{proposition}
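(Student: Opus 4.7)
The plan is to handle the two inequalities separately. The first is immediate from the triangle inequality applied to the defining sum
$$\mathcal{P}_{\underline{\eta},\mathrm{k}}(\bx')=\frac{1}{k!}\sum_\sigma z_{\sigma(j_1)}\cdots z_{\sigma(j_k)}.$$
For the second inequality, the naive submultiplicativity bound $|\lambda\mu|\le 2^{(p+q)/2}|\lambda||\mu|$ from Proposition \ref{norm-product} is useless because of the exponential constant; instead, I would exploit the equality part of that same proposition, which gives $|\lambda\mu|=|\lambda||\mu|$ whenever the left factor satisfies $\lambda\lambda^{\dag}=|\lambda|^2$.

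The crux of the argument is therefore to verify that every Fueter variable $z_\ell$ enjoys the identity $z_\ell z_\ell^{\dag}=|z_\ell|^2$. For $\ell=0$, $z_0 = x_0 + r\underline{\eta}$ is a paravector and $z_0\overline{z_0}=x_0^2+r^2=|z_0|^2$ is standard. For $\ell\geq 1$ the variable $z_\ell=x_\ell+r\underline{\eta}e_\ell$ is \emph{not} a paravector, so this requires a short direct calculation. Since $z_\ell$ has real coefficients, $z_\ell^{\dag}=\overline{z_\ell}$; using $\overline{\underline{\eta}e_\ell}=\overline{e_\ell}\,\overline{\underline{\eta}}=(-e_\ell)(-\underline{\eta})=e_\ell\underline{\eta}=-\underline{\eta}e_\ell$ (the last equality because $e_\ell\in\mathbb{R}^p$ anti-commutes with $\underline{\eta}\in\mathbb{R}^q$ when $\ell\geq 1$), one finds $\overline{z_\ell}=x_\ell-r\underline{\eta}e_\ell$. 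Multiplying out and using $(\underline{\eta}e_\ell)^2=-\underline{\eta}^2 e_\ell^2=-1$ yields $z_\ell\overline{z_\ell}=x_\ell^2+r^2=|z_\ell|^2$, as required.

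With this identity in hand, I would apply the equality case of Proposition \ref{norm-product} iteratively inside each monomial $z_{\sigma(j_1)}z_{\sigma(j_2)}\cdots z_{\sigma(j_k)}$: the leftmost remaining factor always satisfies $\lambda\lambda^{\dag}=|\lambda|^2$, so peeling off one factor at a time yields, after $k$ steps, the clean factorization $|z_{\sigma(j_1)}\cdots z_{\sigma(j_k)}|=|z_{\sigma(j_1)}|\cdots|z_{\sigma(j_k)}|$. Since the $|z_\ell|$ are nonnegative real numbers and therefore commute, each permutation $\sigma$ of $(j_1,\ldots,j_k)$ contributes the same scalar product $|z_0|^{k_0}|z_1|^{k_1}\cdots|z_p|^{k_p}$; summing over the $k!/\mathrm{k}!$ permutations and dividing by $k!$ produces the asserted bound $\frac{1}{\mathrm{k}!}|z_0|^{k_0}\cdots|z_p|^{k_p}$. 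The only mildly delicate point I expect is the verification of $z_\ell z_\ell^{\dag}=|z_\ell|^2$ for $\ell\geq 1$, since in that range $z_\ell$ lies outside the paravector space and the identity is not a black-box consequence of paravector conjugation; everything else is combinatorial bookkeeping.
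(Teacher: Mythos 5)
Your proof is correct and follows essentially the same route as the paper: the paper's (very terse) argument is precisely to note that $z_{\ell}\overline{z_{\ell}}=x_{\ell}^{2}+r^{2}=|z_{\ell}|^{2}$ and then invoke the equality case of Proposition \ref{norm-product} together with the triangle inequality. Your explicit verification of $z_\ell z_\ell^{\dag}=|z_\ell|^2$ for $\ell\geq 1$ and the permutation count simply spell out details the paper leaves implicit.
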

\begin{proof}
Note that $ z_{\ell}\overline{z_{\ell}}=  x_{\ell}^{2}+r^{2}=|z_{\ell}|^{2}$. Hence,  the  estimate for the Fueter polynomials follows by Proposition  \ref{norm-product}.
\end{proof}
 \begin{proposition}\label{Futer-valued}
Let $f_{0}: \mathbb{R}^{p+1} \to\mathbb{R}$ be a  polynomial. Then we have $CK[f_{0}](\bx)=F_1(\bx')+\underline{\omega} F_2(\bx') \in \mathcal{GSM}^{L}(\mathbb{R}^{p+q+1})$ with $F_1(\bx')\in \mathbb{R}, F_2(\bx')\in \mathrm{span}_{\mathbb{R}}\{1,e_{1},\ldots,e_p \}$.
\end{proposition}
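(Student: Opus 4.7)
The plan is to apply directly the even/odd-power decomposition of the CK-extension recalled in Remark \ref{Cauchy-Kovalevska-extension-odd-even}, namely
$$CK[f_0](\bx) = \sum_{k=0}^{\infty} \frac{r^{2k}}{(2k)!}(-\Delta_x)^k f_0(x) \; + \; \underline{\omega}\, D_x \sum_{k=0}^{\infty} \frac{r^{2k+1}}{(2k+1)!}(-\Delta_x)^k f_0(x),$$
with $\bx = x + r\underline{\omega}$. Since $f_0$ is a polynomial, $(-\Delta_x)^k f_0 \equiv 0$ for $k$ large enough, so both sums are finite and no convergence issue arises. In particular $CK[f_0]$ is defined on all of $\mathbb{R}^{p+q+1}$ and lies in $\mathcal{GSM}^L(\mathbb{R}^{p+q+1})$ by Theorem \ref{slice-Cauchy-Kovalevsky-extension} (alternatively, the hypothesis of Proposition \ref{Ck-entire-condition} is trivially satisfied).

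From the decomposition I read off the natural candidates
$$F_1(\bx') := \sum_{k=0}^{\infty} \frac{r^{2k}}{(2k)!}(-\Delta_x)^k f_0(x), \qquad F_2(\bx') := D_x \sum_{k=0}^{\infty} \frac{r^{2k+1}}{(2k+1)!}(-\Delta_x)^k f_0(x),$$
which depend only on $\bx' = (x_0,\ldots,x_p,r)$ and not on the unit vector $\underline{\omega}$. The remainder of the proof is then a straightforward check of the asserted target spaces. The Laplacian $\Delta_x = \sum_{i=0}^{p}\partial_{x_i}^2$ is a real scalar differential operator, so each $(-\Delta_x)^k f_0$ is again a real-valued polynomial in $x$; multiplying by the real scalar $r^{2k}/(2k)!$ and summing preserves this, which yields $F_1(\bx') \in \mathbb{R}$.

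For $F_2$, I would write $D_x = \sum_{i=0}^{p} e_i \partial_{x_i}$ and use that $D_x$ commutes with the scalar operators $(-\Delta_x)^k$ and with multiplication by $r^{2k+1}/(2k+1)!$. Each $\partial_{x_i}(-\Delta_x)^k f_0$ is a real-valued polynomial, so after summation $F_2(\bx')$ takes the form $\sum_{i=0}^{p} e_i\, h_i(\bx')$ with $h_i$ real-valued. Since $e_0 = 1$, this is precisely $F_2(\bx') \in \mathrm{span}_{\mathbb{R}}\{1, e_1, \ldots, e_p\}$, as claimed.

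There is no substantive obstacle in this argument: the entire proof is bookkeeping of which factors are real scalars and which carry the basis vectors $e_0, \ldots, e_p$. The one point that deserves a brief mention is that the single $\underline{\omega}$ appearing in Remark \ref{Cauchy-Kovalevska-extension-odd-even} is factored out of the second sum — so that the remaining series truly depends only on $\bx'$ — which is immediate from the formula itself.
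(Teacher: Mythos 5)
Your proposal is correct and is essentially identical to the paper's own proof, which likewise invokes the even/odd decomposition of Remark \ref{Cauchy-Kovalevska-extension-odd-even} together with the fact that $(-\Delta_{x})^{k}$ is a real operator; you merely spell out the bookkeeping in more detail.
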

\begin{proof}
Remark \ref{Cauchy-Kovalevska-extension-odd-even} and the fact that $(-\Delta_x)^k$ is  a real operator immediately give  the result.
\end{proof}
Proposition \ref{Futer-valued} shows  that  when dealing with functions whose restriction to $\mathbb R^{p+1}$ are real valued, we have that $F_1 +\underline{\omega} F_2 \in \mathbb{R}_{p+q}^{0}\oplus\mathbb{R}_{p+q}^{1} \oplus \mathbb{R}_{p+q}^{2}$, hence we have that
\[
\begin{split}(F_1 +\underline{\omega} F_2) \overline{(F_1 +\underline{\omega} F_2)}&=
(F_1 +\underline{\omega} F_2) {(F_1 -\overline{F_2}\underline{\omega} )}\\
&=
(F_1 +\underline{\omega} F_2) {(F_1 -\underline{\omega}{F_2} )}\\
&=
 F_1^{2}- \underline{\omega} F_2\underline{\omega} F_2\\
 &=F_1^{2}+|F_2|^{2}=|F_1 +\underline{\omega} F_2|^{2},
 \end{split}
 \]
which gives,  by Proposition  \ref{Ck-P},
\begin{equation}\label{Fueter-norm}
\mathcal{P}_{ \mathrm{k}}  ( \bx ) \overline{\mathcal{P}_{ \mathrm{k}}  ( \bx )} =|\mathcal{P}_{ \mathrm{k}}  ( \bx )|^{2}.
\end{equation}
This fact shall be useful in  proving the next result, namely Theorem \ref{slice-Cauchy-Kovalevsky-extension}, whose proof also makes use of a version of Weierstrass theorem for generalized partial-slice monogenic functions as follows.

\begin{lemma}\label{Weierstrass}
  Let $\{f_{m}\}_{m\in \mathbb{N}}$ be a sequence of  generalized partial-slice monogenic functions in a p-symmetric slice
domain  $\Omega$  that converges to a function $f$ uniformly on compact sets in   $\Omega$. Then $f\in \mathcal{GSM}(\Omega)$.
\end{lemma}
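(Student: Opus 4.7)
The plan is to reduce the claim, for each fixed $\underline{\omega}\in\mathbb{S}$, to the classical Weierstrass-type convergence theorem for monogenic functions on the slice domain $\Omega_{\underline{\omega}}\subseteq\mathbb{R}^{p+2}$.

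Since $\Omega$ is a p-symmetric slice domain, Definition \ref{slice-domain} guarantees that $\Omega_{\underline{\omega}}$ is a nonempty domain in $\mathbb{R}^{p+2}$. Every compact set $K\subset\Omega_{\underline{\omega}}$, viewed inside $\mathbb{R}^{p+q+1}$ via the natural embedding $\mathbb{R}^{p+1}\oplus\underline{\omega}\mathbb{R}\hookrightarrow\mathbb{R}^{p+q+1}$, is still compact and contained in $\Omega$. Hence the hypothesis that $f_m\to f$ uniformly on compact subsets of $\Omega$ immediately yields the uniform convergence $(f_m)_{\underline{\omega}}\to f_{\underline{\omega}}$ on compact subsets of $\Omega_{\underline{\omega}}$.

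By the definition of generalized partial-slice monogenicity, each restriction $(f_m)_{\underline{\omega}}$ is $C^{1}$ on $\Omega_{\underline{\omega}}$ and annihilated by the Dirac-type operator $D_{\underline{\omega}}=D_x+\underline{\omega}\partial_r$ on $\mathbb{R}^{p+2}$. I would transfer this property to $f_{\underline{\omega}}$ using the Cauchy integral representation associated with $D_{\underline{\omega}}$: fixing any $\bx_0\in\Omega_{\underline{\omega}}$ and a closed ball $\overline{V}\subset\Omega_{\underline{\omega}}$ containing $\bx_0$ in its interior, each $(f_m)_{\underline{\omega}}$ reproduces itself inside $V$ as a boundary integral of its values on $\partial V$ against the Cauchy kernel of $D_{\underline{\omega}}$. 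Uniform convergence on $\partial V$ lets me pass the limit through this integral and obtain the same integral representation for $f_{\underline{\omega}}$, which implies in turn that $f_{\underline{\omega}}$ is smooth in $V$ and satisfies $D_{\underline{\omega}}f_{\underline{\omega}}=0$ there. Since $\bx_0\in\Omega_{\underline{\omega}}$ and $\underline{\omega}\in\mathbb{S}$ were arbitrary, we conclude $f\in\mathcal{GSM}(\Omega)$.

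A more internal alternative is to apply the Splitting Lemma \ref{Splitting-lemma} to each $f_m$, writing $(f_m)_{\underline{\omega}}=\sum_{A}F_A^{(m)}I_A$ with each $F_A^{(m)}$ classically monogenic on $\Omega_{\underline{\omega}}$. Component extraction relative to the fixed basis $\{I_A\}$ is a bounded $\mathbb{R}$-linear projection, so uniform convergence $(f_m)_{\underline{\omega}}\to f_{\underline{\omega}}$ passes to each $F_A^{(m)}\to F_A$; the classical Weierstrass theorem identifies each $F_A$ as monogenic, and reassembly yields $D_{\underline{\omega}}f_{\underline{\omega}}=0$. The only non-routine ingredient in either approach is the classical Weierstrass theorem for monogenic functions on domains of $\mathbb{R}^{p+2}$, which is standard Clifford analysis. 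I expect the main obstacle to be essentially bookkeeping: identifying $D_{\underline{\omega}}$ with a standard Dirac operator on $\mathbb{R}^{p+2}$ through the isometry $(x,r)\mapsto x+r\underline{\omega}$ so that the cited classical results apply verbatim.
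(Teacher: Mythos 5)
Your proposal is correct, and your ``more internal alternative'' is precisely the paper's own proof: apply the Splitting Lemma \ref{Splitting-lemma} to each $f_m$ on $\Omega_{\underline{\omega}}$, pass the uniform convergence to the components $F_{m,A}\to F_A$, invoke the classical Weierstrass theorem for monogenic functions, and reassemble. Your first route via the Cauchy integral for $D_{\underline{\omega}}$ is also valid but is just the standard proof of that classical theorem carried out directly, so there is no essential difference.
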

\begin{proof}
Recall that any function $f:\ \Omega\subseteq \mathbb{R}^{p+q+1}\to\mathbb{C}_{p+q}$ can be written as
$$f(\bx)=\sum_{A=\{i_{1},\ldots, i_{s}\} \subseteq \{p+2,\ldots,p+q\}}F_{A}(\bx)I_{A}, \quad \bx\in \Omega.$$
It is now enough to prove that $F_A$ are monogenic on $\Omega_{\underline{\omega}}$.
By the splitting lemma (Lemma \ref{Splitting-lemma}), there exist   monogenic functions $F_{m,A}:\Omega_{\underline{\omega}} \rightarrow \mathbb{C}_{p+1}={\rm Alg}_{\mathbb{C}}\{I_1,\ldots, I_p, $ $I_{p+1}=\underline{\omega}\}$  such that
$$ f_m (x+r\underline{\omega})=\sum_{A=\{i_{1},\ldots, i_{s}\} \subseteq \{p+2,\ldots,p+q\}}F_{m,A}(\bx_p+r\underline{\omega})I_{A}, \quad \bx_p+r\underline{\omega}\in \Omega_{\underline{\omega}}.$$
Clearly, $F_{m,A} \rightarrow F_{A}$ uniformly on compact sets
in  $\Omega_{\underline{\omega}}$, so that $F_{A}$ must be monogenic  in  $\Omega_{\underline{\omega}}$, as desired.
\end{proof}
Let $\rho>0$ be given, denote by $C(\rho)$ the open $(p+1)$-dimensional cube centred
at the origin with edge $2\rho$, i.e.
$$C(\rho)=\{ \bx_p: |x_\ell|<\rho,  \ell=0,1,\ldots,p\}.  $$
Let us consider the p-symmetric slice domain
 $$D(\rho)=\{\bx_p+\underline{\bx}_q:  x_\ell^{2}+|\underline{\bx}_q|^{2}<\rho^{2}, \ell=0,1,\ldots,p\}.$$
Observe that $D(\rho)\cap \mathbb{R}^{p+1}=C(\rho)\subset D(\rho)$.

Now let us show the rationality of the CK-extension  to $D(\rho)$ of a real analytic function defined in  $C(\rho)$, which is a variant of \cite[Theorem 14.2]{Brackx}.
\begin{theorem}\label{slice-Cauchy-Kovalevsky-extension}
Let  $f_{0}: C(\rho)  \to \mathbb{C}_{p+q}$ be a real analytic function such that its Taylor
series about $0$ converges as a multiple power series in $C(\rho)$.  Then the function given by
$$CK[f_{0}](\bx)= \exp (\underline{y} D_{x} ) f_{0}(x)
=\sum_{k=0}^{+\infty} \frac{1}{k!}( \underline{y} D_{x})^{k}f_{0}(x), \quad \bx=x+ \underline{y},$$
is well-defined and gives the CK-extension of $f_0$ in $D(\rho)$.
\end{theorem}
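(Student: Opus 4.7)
The plan is to reduce the problem to the polynomial case via the Taylor expansion of $f_0$ and then pass to the limit using the Fueter polynomial estimate together with the Weierstrass-type Lemma \ref{Weierstrass}. Write
$$f_0(x) = \sum_{\mathrm{k}\in \mathbb{N}_0^{p+1}} \frac{x^{\mathrm{k}}}{\mathrm{k}!}\, c_{\mathrm{k}},\qquad c_{\mathrm{k}} := \partial_{\mathrm{k}} f_0(0) \in \mathbb{C}_{p+q}.$$
By hypothesis this series converges as a multiple power series on $C(\rho)$, and a standard argument (reducing to the points $(\pm r_0,\dots,\pm r_p)$ with $r_\ell<\rho$) upgrades this to absolute convergence, so that
$$M(r_0,\ldots,r_p) := \sum_{\mathrm{k}} \frac{r_0^{k_0}\cdots r_p^{k_p}}{\mathrm{k}!}\, |c_{\mathrm{k}}| < +\infty$$
for every $(r_0,\ldots,r_p)$ with $0\leq r_\ell < \rho$. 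Applied to the partial sums $f_{0,N}(x) := \sum_{|\mathrm{k}|\leq N} \tfrac{x^{\mathrm{k}}}{\mathrm{k}!} c_{\mathrm{k}}$, the right $\mathbb{C}_{p+q}$-linearity of $CK$ and Proposition \ref{Ck-P} give
$$CK[f_{0,N}](\bx) = \sum_{|\mathrm{k}|\leq N} \mathcal{P}_{\mathrm{k}}(\bx)\, c_{\mathrm{k}},$$
and each $CK[f_{0,N}]$ is generalized partial-slice monogenic on the whole $\mathbb{R}^{p+q+1}$.

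Next I would show that these partial sums converge uniformly on compact subsets of $D(\rho)$. For $\bx = x + r\underline{\omega} \in D(\rho)$, taking $\underline{\eta}=\underline{\omega}$ in Definition \ref{definition-Fueter} gives $|z_\ell|^2 = x_\ell^2 + r^2 < \rho^2$. Hence Propositions \ref{estimate-Fueter} and \ref{norm-product} yield
$$|\mathcal{P}_{\mathrm{k}}(\bx)\, c_{\mathrm{k}}| \leq 2^{(p+q)/2}\,|\mathcal{P}_{\mathrm{k}}(\bx)|\,|c_{\mathrm{k}}| \leq 2^{(p+q)/2}\,\frac{|z_0|^{k_0}\cdots|z_p|^{k_p}}{\mathrm{k}!}\,|c_{\mathrm{k}}|.$$
Given a compact $K\subset D(\rho)$, one can choose $\rho'\in(0,\rho)$ with $\max_\ell |z_\ell|\leq \rho'$ uniformly for $\bx\in K$; the dominating series is then bounded by $2^{(p+q)/2} M(\rho',\ldots,\rho')<+\infty$, and so the limit
$$F(\bx) := \sum_{\mathrm{k}} \mathcal{P}_{\mathrm{k}}(\bx)\, c_{\mathrm{k}}$$
exists, absolutely and uniformly on compacta of $D(\rho)$.

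Since $D(\rho)$ is a p-symmetric slice domain, Lemma \ref{Weierstrass} implies $F\in \mathcal{GSM}(D(\rho))$. On $C(\rho)=D(\rho)\cap \mathbb{R}^{p+1}$ the variables $z_\ell$ reduce to $x_\ell$, so $\mathcal{P}_{\mathrm{k}}(x)=x^{\mathrm{k}}/\mathrm{k}!$ and $F|_{C(\rho)} = f_0$, proving that $F$ is the CK-extension of $f_0$ to $D(\rho)$. To identify $F$ with the series appearing in the statement, I would interchange summations:
$$\sum_{k=0}^{\infty} \frac{(\underline{y}D_x)^k}{k!} f_0(x) = \sum_{\mathrm{k}} \Big(\sum_{k=0}^{\infty} \frac{(\underline{y}D_x)^k}{k!}\frac{x^{\mathrm{k}}}{\mathrm{k}!}\Big) c_{\mathrm{k}} = \sum_{\mathrm{k}} \mathcal{P}_{\mathrm{k}}(\bx)\, c_{\mathrm{k}} = F(\bx),$$
where the inner equality is Proposition \ref{Ck-P} and the interchange is legitimate by the dominating bound above.

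The main technical obstacle is precisely this dominated-convergence/interchange step: one must keep the Taylor coefficients $c_{\mathrm{k}}$ on the right throughout (the algebra is noncommutative), and one needs the hypothesis ``convergent as a multiple power series on $C(\rho)$'' to actually give absolute convergence on any subpolydisc, so that the majorant $M(\rho',\ldots,\rho')$ is finite for every $\rho'<\rho$. Once this is in place, the proof is a clean combination of the polynomial identity in Proposition \ref{Ck-P}, the Fueter estimate in Proposition \ref{estimate-Fueter}, and the Weierstrass theorem in Lemma \ref{Weierstrass}.
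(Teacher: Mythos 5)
Your proposal is correct and follows essentially the same route as the paper: expand $f_0$ in its Taylor series, identify the partial sums of the CK-extension with $\sum_{\mathrm{k}}\mathcal{P}_{\mathrm{k}}(\bx)\,\partial_{\mathrm{k}}f_0(0)$ via Proposition \ref{Ck-P}, dominate using Proposition \ref{estimate-Fueter} on $\overline{D(\varrho)}$, and conclude with the Weierstrass-type Lemma \ref{Weierstrass}. The only cosmetic difference is that the paper uses the norm equality $|\mathcal{P}_{\mathrm{k}}(\bx)\,c_{\mathrm{k}}|=|\mathcal{P}_{\mathrm{k}}(\bx)|\,|c_{\mathrm{k}}|$ (via \eqref{Fueter-norm} and Proposition \ref{norm-product}) where you settle for the generic constant $2^{(p+q)/2}$, which is immaterial for convergence.
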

\begin{proof} By assumption, the function
$$f_{0}(x)=\sum_{k=0}^{+\infty} \frac{1}{k!}  \sum_{i_1,\ldots, i_k=0}^p  x_{i_1} \ldots  x_{i_k}  \frac{\partial^k}{\partial_{x_{i_1}}\ldots \partial_{x_{i_k}}}f_0(0)
=\sum_{k=0}^{+\infty} \sum_{|\mathrm{k}|=k} \frac{x^{\mathrm{k}}}{\mathrm{k}!}
  \partial_{\mathrm{k}} f_{0}(0)$$
 converges as a multiple power series in $C(\rho)$. This means that for every $0<\varrho<\rho$
$$\sum_{k=M}^{N}  \sum_{|\mathrm{k}|=k}  \frac{ \varrho^{k}}{\mathrm{k}!}
  |\partial_{\mathrm{k}} f_{0}(0)| \rightarrow 0,\ {\mbox {as}} \ \inf \{N,M\} \rightarrow\infty.$$
Denote
  $$S_{N}(\bx)=\sum_{k=0}^{N} \sum_{|\mathrm{k}|=k} \mathcal{P}_{\mathrm{k}}(\bx)
  \partial_{\mathrm{k}} f_{0}(0).$$
Hence, by (\ref{Fueter-norm}), Proposition \ref{norm-product} and  Proposition \ref{estimate-Fueter},
  \begin{eqnarray*} \label{ReprPk}
 |S_{N}(\bx)-S_{M}(\bx)|
 & \leq& \sum_{k=M}^{N} \sum_{|\mathrm{k}|=k} |\mathcal{P}_{\mathrm{k}}(\bx) \partial_{\mathrm{k}} f_{0}(0)|
 \\
&=&\sum_{k=M}^{N} \sum_{|\mathrm{k}|=k} |\mathcal{P}_{\mathrm{k}}(\bx)|  |\partial_{\mathrm{k}} f_{0}(0)|
 \\
& \leq &\sum_{k=M}^{N} \sum_{|\mathrm{k}|=k} \dfrac{  1}{\mathrm{k}!} |z_0|^{k_0}  \cdots |z_p|^{k_p} |\partial_{\mathrm{k}} f_{0}(0)|.
\end{eqnarray*}
Now for $\bx\in \overline{D(\varrho)}$, we have
$$|S_{N}(\bx)-S_{M}(\bx)|\leq\sum_{k=M}^{N} \sum_{|\mathrm{k}|=k} \dfrac{  \varrho ^{k}}{\mathrm{k}!}   |\partial_{\mathrm{k}} f_{0}(0)|
\rightarrow 0,\ {\mbox {as}} \ \inf \{N,M\} \rightarrow\infty.$$
 Hence, by Lemma \ref{Weierstrass}, there  exists a function $f^{\ast}\in \mathcal{GSM}^{L}(D(\rho))$ such that
$$f^{\ast}(\bx)=\lim_{N\rightarrow\infty} S_{N}(\bx)=\sum_{k=0}^{\infty } \sum_{|\mathrm{k}|=k} \mathcal{P}_{\mathrm{k}}(\bx)
  \partial_{\mathrm{k}} f_{0}(0),$$
as desired.
  \end{proof}

Set $B_{p+1}(\by,\rho)=B(\by,\rho)\cap \mathbb{R}^{p+1}$. Note that we have the inclusion relations
$$C(\frac{\rho}{\sqrt{p+1}})\subseteq B_{p+1}(0,\rho), \ B(0,\rho)\subseteq D(\rho).$$
Hence, Theorem \ref{slice-Cauchy-Kovalevsky-extension} gives that

  \begin{corollary}\label{slice-Cauchy-Kovalevsky-extension-ball}
Let  $f_{0}:B_{p+1}(0,\rho)  \to \mathbb{C}_{p+q}$ be a real analytic function such that its Taylor
series about $0$ converges as a multiple power series in $B_{p+1}(0,\rho)$.  Then the function
$CK[f_{0}]$ is well-defined and gives the CK-extension of $f_0$ in $B(0,\frac{\rho}{\sqrt{p+1}})$.
\end{corollary}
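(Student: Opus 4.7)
The plan is to deduce this corollary directly from Theorem \ref{slice-Cauchy-Kovalevsky-extension} together with the two inclusion relations displayed just above its statement, namely
$$C(\tfrac{\rho}{\sqrt{p+1}})\subseteq B_{p+1}(0,\rho) \qquad \text{and} \qquad B(0,\tfrac{\rho}{\sqrt{p+1}})\subseteq D(\tfrac{\rho}{\sqrt{p+1}}).$$
Setting $\rho':=\rho/\sqrt{p+1}$, the first inclusion guarantees that the restriction $f_0|_{C(\rho')}$ is real analytic and that its Taylor series about $0$ still converges as a multiple power series on the smaller cube $C(\rho')$, since absolute convergence of the multiple power series on the ball $B_{p+1}(0,\rho)$ forces the same property on any subset. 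Thus the hypotheses of Theorem \ref{slice-Cauchy-Kovalevsky-extension} are satisfied with $\rho$ replaced by $\rho'$ and with $C(\rho)$ replaced by $C(\rho')$.

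I would then apply Theorem \ref{slice-Cauchy-Kovalevsky-extension} to $f_0|_{C(\rho')}$, which yields that $CK[f_0]$, defined by the series $\sum_{k=0}^{+\infty}\frac{1}{k!}(\underline{y}D_x)^{k}f_0(x)$, is a well-defined generalized partial-slice monogenic function on $D(\rho')$ that agrees with $f_0$ on $C(\rho')\subseteq \mathbb{R}^{p+1}$. The second inclusion $B(0,\rho')\subseteq D(\rho')$ then shows that this function is in particular defined and generalized partial-slice monogenic on the ball $B(0,\frac{\rho}{\sqrt{p+1}})$; since on the paravector slice $B_{p+1}(0,\rho')\subseteq B_{p+1}(0,\rho)$ it coincides with $f_0$, it is the CK-extension of $f_0$ on $B(0,\frac{\rho}{\sqrt{p+1}})$ in the sense of Definition \ref{Slice-Cauchy-Kovalevska-extension}.

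There is essentially no obstacle in this argument: it is a routine transfer from the ``cube'' formulation Theorem \ref{slice-Cauchy-Kovalevsky-extension} to the present ``ball'' formulation, achieved by inscribing the cube $C(\rho')$ inside the ball $B_{p+1}(0,\rho)$ on the domain side and by using that $D(\rho')$ contains the ball $B(0,\rho')$ on the extension side. The only minor point to check is the inheritance of the multiple-power-series convergence by the smaller cube, which is automatic from the inclusion of sets.
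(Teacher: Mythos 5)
Your proposal is correct and follows exactly the paper's own (inline) argument: apply Theorem \ref{slice-Cauchy-Kovalevsky-extension} with radius $\rho'=\rho/\sqrt{p+1}$ to the inscribed cube $C(\rho')\subseteq B_{p+1}(0,\rho)$ and then restrict the resulting extension on $D(\rho')$ to the contained ball $B(0,\rho')$. Nothing is missing.
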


Letting  $\rho\rightarrow +\infty$ in  Corollary \ref{slice-Cauchy-Kovalevsky-extension-ball}, we obtain by Theorem \ref{Identity-theorem}  the following result.
\begin{corollary} \label{slice-Cauchy-Kovalevsky-extension-entire}
Let  $f_{0}:  \mathbb{R}^{p+1}  \to \mathbb{C}_{p+q}$ be a real analytic function such that its Taylor
series about $0$ converges as a multiple power series in $\mathbb{R}^{p+1} $.  Then
$CK[f_{0}]$ is the unique CK-extension of $f_0$ in $\mathbb{R}^{p+q+1} $.
\end{corollary}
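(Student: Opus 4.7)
The plan is to leverage Corollary \ref{slice-Cauchy-Kovalevsky-extension-ball} by exhausting $\mathbb{R}^{p+q+1}$ with a nested family of balls and then invoke Theorem \ref{Identity-theorem} for uniqueness. First I would observe that the hypothesis, namely that the Taylor series of $f_0$ about the origin converges as a multiple power series on all of $\mathbb{R}^{p+1}$, is inherited by every subdomain $B_{p+1}(0,\rho)\subset \mathbb{R}^{p+1}$ for arbitrary $\rho>0$. Consequently Corollary \ref{slice-Cauchy-Kovalevsky-extension-ball} applies for each such $\rho$ and produces a generalized partial-slice monogenic function $f^*_\rho$ on $B(0,\rho/\sqrt{p+1})$ extending $f_0$.

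Next I would verify that these local CK-extensions are compatible: for $0<\rho_1<\rho_2$, both $f^*_{\rho_1}$ and the restriction of $f^*_{\rho_2}$ to $B(0,\rho_1/\sqrt{p+1})$ are generalized partial-slice monogenic on the p-symmetric slice domain $B(0,\rho_1/\sqrt{p+1})$ and they agree with $f_0$ on the open set $B_{p+1}(0,\rho_1/\sqrt{p+1})\subseteq \mathbb{R}^{p+1}$, which is a $(p+1)$-dimensional smooth manifold sitting inside $B(0,\rho_1/\sqrt{p+1})_{\underline{\omega}}$ for every $\underline{\omega}\in\mathbb{S}$. The Identity Theorem \ref{Identity-theorem} therefore forces them to coincide. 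Gluing the $f^*_\rho$'s yields a well-defined function $CK[f_0]$ on the union $\bigcup_{\rho>0} B(0,\rho/\sqrt{p+1})=\mathbb{R}^{p+q+1}$, which is generalized partial-slice monogenic since generalized partial-slice monogenicity is a local property.

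Finally, for uniqueness, I would argue directly: if $g\in \mathcal{GSM}(\mathbb{R}^{p+q+1})$ satisfies $g|_{\mathbb{R}^{p+1}}=f_0=CK[f_0]|_{\mathbb{R}^{p+1}}$, then $g$ and $CK[f_0]$ agree on $\mathbb{R}^{p+1}$, which again is a $(p+1)$-dimensional smooth manifold contained in $(\mathbb{R}^{p+q+1})_{\underline{\omega}}$ for every $\underline{\omega}\in\mathbb{S}$. Since $\mathbb{R}^{p+q+1}$ is clearly a slice domain, Theorem \ref{Identity-theorem} delivers $g\equiv CK[f_0]$.

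The step I expect to require a little care is the compatibility/gluing argument, simply because one must remember to check that the balls $B(0,\rho/\sqrt{p+1})$ are p-symmetric slice domains so that the hypotheses of the Identity Theorem are genuinely satisfied; once this is noted, the rest is essentially routine once Corollary \ref{slice-Cauchy-Kovalevsky-extension-ball} is in hand.
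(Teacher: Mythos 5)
Your proposal is correct and follows essentially the same route as the paper, which simply lets $\rho\to+\infty$ in Corollary \ref{slice-Cauchy-Kovalevsky-extension-ball} and invokes Theorem \ref{Identity-theorem}; you have merely made explicit the compatibility/gluing step and the final uniqueness argument that the paper leaves implicit.
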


From  Theorem   \ref{Taylor-theorem} and  Corollary \ref{slice-Cauchy-Kovalevsky-extension-entire}, we get directly that
 \begin{corollary}\label{slice-Cauchy-Kovalevsky-extension-entire-Identity}
 Let $f$  be a generalized partial-slice monogenic  function on $\mathbb{R}^{p+q+1}$ with  $f(0,\underline{y})=f_{0}(\underline{y})$. Then
 $f(x,\underline{y})=CK[f_{0}(x)](x,\underline{y}).$
  \end{corollary}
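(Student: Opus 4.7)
The plan is to combine the Taylor expansion of Theorem \ref{Taylor-theorem} with the uniqueness assertion of Corollary \ref{slice-Cauchy-Kovalevsky-extension-entire}, interpreting $f_{0}$ as the restriction $f|_{\mathbb{R}^{p+1}}$ (the hypothesis is most naturally read as $f(x)=f_{0}(x)$ for $x\in\mathbb{R}^{p+1}$). First I would apply Theorem \ref{Taylor-theorem} on every ball $B(0,\rho)\subset\mathbb{R}^{p+q+1}$; since $f$ is generalized partial-slice monogenic on the whole of $\mathbb{R}^{p+q+1}$, the radius $\rho$ is unrestricted, and one obtains the global expansion
$$
f(\bx)=\sum_{k=0}^{+\infty}\sum_{|\mathrm{k}|=k}\mathcal{P}_{\mathrm{k}}(\bx)\,\partial_{\mathrm{k}}f(0),
$$
uniformly convergent on compact subsets of $\mathbb{R}^{p+q+1}$. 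Specializing to $r=|\underline{y}|=0$ collapses each Fueter variable $z_{\ell}=x_{\ell}+r\underline{\eta}e_{\ell}$ to the commuting real scalar $x_{\ell}$, so the permutation sum in Definition \ref{definition-Fueter} simplifies to $\mathcal{P}_{\mathrm{k}}(x)=x^{\mathrm{k}}/\mathrm{k}!$; thus
$$
f_{0}(x)=\sum_{k=0}^{+\infty}\sum_{|\mathrm{k}|=k}\frac{x^{\mathrm{k}}}{\mathrm{k}!}\,\partial_{\mathrm{k}}f(0),
$$
which is precisely the standard multivariable Taylor series of $f_{0}$ about the origin.

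Next I would verify that this Taylor series converges as a multiple power series on all of $\mathbb{R}^{p+1}$. For any $\varrho>0$ and any $\bx\in\overline{B(0,\varrho)}$, one has $|z_{\ell}|^{2}=x_{\ell}^{2}+r^{2}\le\varrho^{2}$ for each $\ell=0,\dots,p$, so Proposition \ref{estimate-Fueter} yields $|\mathcal{P}_{\mathrm{k}}(\bx)|\le\varrho^{|\mathrm{k}|}/\mathrm{k}!$. Coupled with the absolute convergence of the Fueter-polynomial expansion of $f$ inside $B(0,\varrho)$ (which, being a power-series-type expansion, is a standard consequence of the uniform convergence on compacta asserted in Theorem \ref{Taylor-theorem}), this gives $\sum_{\mathrm{k}}\varrho^{|\mathrm{k}|}|\partial_{\mathrm{k}}f(0)|/\mathrm{k}!<+\infty$ for every $\varrho>0$. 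That is exactly the multiple-power-series convergence condition required as input for Corollary \ref{slice-Cauchy-Kovalevsky-extension-entire}.

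Finally, Corollary \ref{slice-Cauchy-Kovalevsky-extension-entire} applies and identifies $CK[f_{0}]$ as the \emph{unique} CK-extension of $f_{0}$ to $\mathbb{R}^{p+q+1}$. But $f$ itself is a generalized partial-slice monogenic function on $\mathbb{R}^{p+q+1}$ with $f|_{\mathbb{R}^{p+1}}=f_{0}$, so by Definition \ref{Slice-Cauchy-Kovalevska-extension} $f$ is a CK-extension of $f_{0}$ as well, and uniqueness forces $f=CK[f_{0}]$, as claimed. The one technical subtlety lies in the middle paragraph: upgrading the total-degree-grouped uniform convergence of Theorem \ref{Taylor-theorem} to the unordered absolute summability over all multi-indices $\mathrm{k}$ that Corollary \ref{slice-Cauchy-Kovalevsky-extension-entire} requires, and it is precisely the Fueter-polynomial norm estimate in Proposition \ref{estimate-Fueter} that bridges this gap.
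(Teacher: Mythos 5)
Your argument is correct and follows essentially the same route as the paper, which derives this corollary directly from Theorem \ref{Taylor-theorem} (giving the global Fueter-polynomial expansion of $f$, whose restriction to $\underline{y}=0$ is the everywhere-convergent Taylor series of $f_{0}$) combined with the uniqueness statement of Corollary \ref{slice-Cauchy-Kovalevsky-extension-entire}. The paper offers no further detail, so your additional care with the passage from degree-grouped convergence to unordered multi-index summability only makes the argument more explicit than the original.
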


\section{Segal-Bargmann transforms}
In this section, we study the Segal-Bargmann transform in the context of generalized partial-slice monogenic functions. To this end, we first recall briefly the classical Segal-Bargmann transform for holomorphic functions.

\subsection{The holomorphic case}
Recall  the heat equation in $\mathbb{R}^{d}$
$$ \frac{1}{2} \Delta_{x} [f](x,t)=\partial_{t}[f](x,t), \quad (x,t)\in \mathbb{R}^{d}\times \mathbb{R}^{+},$$
subject to the initial condition $$\lim_{t\searrow0}f(x,0)=f_{0}(x),$$
where $f_{0}: \mathbb{R}^{d}\rightarrow \mathbb{R}$ is a square integrable function   with respect to Euclidean metric, i.e.
$f_{0}\in L^{2}(\mathbb{R}^{d}, dx)$, and  the limit is in the norm topology of $L^{2}(\mathbb{R}^{d}, dx)$.

It is a well-known that the solution to the heat equation is given by
$$f(x,t)= \exp(\frac{t}{2}\Delta_{x})[f_{0}]:=\int_{\mathbb{R}^{d}} \rho_{t}(x-\xi)f_{0}(\xi)d\xi,$$
where $\rho_{t}(x)=\rho(x,t)=\frac{1}{(2\pi t)^{d/2}}e^{-\frac{|x|^{2}}{2t}}$ is the heat kernel in $\mathbb{R}^{d}$.

Note that the function $\rho_{t}$ admits analytic continuation to $\mathbb{C}^{d}$ given by
$$(\rho_{t})_{\mathbb{C}}(z)=\rho(z,t)=\frac{1}{(2\pi t)^{d/2}}e^{-\frac{z^{2}}{2t}},$$
where $z=(z_1,\ldots, z_d)\in \mathbb{C}^{d}, z^{2}=\sum_{i=1}^{d}z_{i}^{2}$.
\begin{definition}
Define the map $U: L^{2}(\mathbb{R}^{d}, dx)\rightarrow \mathcal{H}(\mathbb{C}^{d})$ as
\begin{equation}\label{Segal-Bargmann-transform-def-holo}
 U[f](z) =  \frac{1}{ (2\pi)^{d/2} }\int_{\mathbb{R}^{d} }e^{-\frac{(z- x)^{2}}{2}}f(x)dx,
\end{equation}
where $\mathcal{H}(\mathbb{C}^{d})$ denotes the space of entire holomorphic functions on $\mathbb{C}^{d}$.
The map $U$ is called to be  the Segal-Bargmann transform (or coherent state transform) (cf. \cite{Hall}).
\end{definition}

 Denote by $\mathcal{A}(\mathbb{R}^{d})$  the space of  real analytic complex-valued functions on $\mathbb{R}^{d}$ with unique analytic
continuation to $\mathcal{H}(\mathbb{C}^{d})$,  and by $\mathcal{C}$   the analytic continuation from   $\mathbb{R}^{d}$ to $\mathbb{C}^{d}$.
In fact, the analytic continuation $\mathcal{C}$  on $f\in \mathcal{A}(\mathbb{R}^{d})$ can be written in the form
$$\mathcal{C}[f](z)=\exp (i \sum_{j=1}^{d}y_{j} \partial _{x_{j}})f(x), \quad z=x+yi\in \mathbb{C}^{d}.  $$

The celebrated Segal-Bargmann theorem reads as follows; see \cite[Theorem 6.4]{Hall}\label{Hall}.
\begin{theorem}\label{Segal-Bargmann-theorem}
The transform $U$ in the diagram
$$\xymatrix{
L^{2}(\mathbb{R}^{d}, dx) \ar[dr]_{U} \ar[r]^{\exp\frac{\Delta_{x}}{2}}
                & \widetilde{\mathcal{A}}(\mathbb{R}^{d}) \ar[d]^{\mathcal{C}}  \\
                & \mathcal{H}L^{2}(\mathbb{C}^{d},\frac{1}{ \pi^{d/2}}e^{-y^{2}}dxdy)          }$$
is a unitary isomorphism, where   $\widetilde{\mathcal{A}}(\mathbb{R}^{d}) \subset\mathcal{A}(\mathbb{R}^{d})$  denotes the image of $L^{2}(\mathbb{R}^{d}, dx)$ by the operator $\exp\frac{\Delta_{x}}{2}$ and
 $$\mathcal{H}L^{2}(\mathbb{C}^{d},\frac{1}{ \pi^{n/2} }e^{-y^{2}}dxdy)=\mathcal{H}(\mathbb{C}^{d})\cap L^{2}(\mathbb{C}^{d}, \frac{1}{ \pi^{n/2}}e^{-y^{2}}dxdy).$$
 \end{theorem}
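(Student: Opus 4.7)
The plan is to establish Theorem \ref{Segal-Bargmann-theorem} in three stages: (i) commutativity of the diagram, (ii) isometry of $U$, and (iii) surjectivity of $U$ onto the Fock space $\mathcal{H}L^2(\mathbb{C}^d,\frac{1}{\pi^{d/2}}e^{-y^2}dxdy)$.

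First, I would verify commutativity by identifying $\exp(\frac{\Delta_x}{2})[f_0]$ with the heat convolution $\rho_1\ast f_0$ and then checking that its analytic continuation in $x$ is obtained by replacing $x$ with $z = x+iy$ inside the integral. The kernel $\rho_1(x-\xi) = (2\pi)^{-d/2}\exp(-(x-\xi)^2/2)$ is entire in the $x$-variable and extends to $(2\pi)^{-d/2}\exp(-(z-\xi)^2/2)$; the Gaussian decay in $\xi$ is uniform on compact subsets of $\mathbb{C}^d$, which justifies interchanging the operator $\mathcal{C}$ with the integral. This yields
$$\mathcal{C}\circ \exp\bigl(\tfrac{\Delta_x}{2}\bigr)[f_0](z) \;=\; \frac{1}{(2\pi)^{d/2}}\int_{\mathbb{R}^d} e^{-(z-\xi)^2/2} f_0(\xi)\, d\xi \;=\; U[f_0](z).$$

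Second, for the unitarity of $U$, the cleanest route is via the Hermite basis. The normalized Hermite functions $\{h_n\}_{n\in\mathbb{N}_0^d}$ on $\mathbb{R}^d$ form an orthonormal basis of $L^2(\mathbb{R}^d,dx)$. A direct calculation based on the generating function of the Hermite polynomials (and the integral representation of $U$) shows that, up to a fixed normalization, $U[h_n](z)$ is proportional to the monomial $z^n/\sqrt{n!}$. In parallel, a computation in polar coordinates using the Gamma function identities shows that $\{z^n/\sqrt{n!}\}_{n\in\mathbb{N}_0^d}$ forms an orthonormal family in $\mathcal{H}L^2(\mathbb{C}^d,\frac{1}{\pi^{d/2}}e^{-y^2}dxdy)$. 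Thus $U$ maps an orthonormal basis to an orthonormal family, so it is an isometry.

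Third, for surjectivity, I would check that the monomials $\{z^n/\sqrt{n!}\}$ actually span a dense subspace of the Fock space. This follows because any element of $\mathcal{H}L^2(\mathbb{C}^d,\frac{1}{\pi^{d/2}}e^{-y^2}dxdy)$ is entire and its Taylor expansion at the origin converges in the $L^2$ norm (by the reproducing kernel structure of Fock-type spaces, verifiable by another polar-coordinates computation). Combined with the isometry, this gives the unitary isomorphism.

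The main obstacle will be the isometry computation, specifically matching the non-standard Hall measure $\pi^{-d/2}e^{-y^2}dxdy$ (depending only on the imaginary part) against the more familiar $\pi^{-d}e^{-|z|^2}dA(z)$ Bargmann measure. The key identity reducing one to the other is the Gaussian integration $\int_{\mathbb{R}^d} |U[f_0](x+iy)|^2\, dx$ as a function of $y$, which after expanding the square and applying Fubini reduces to a Plancherel-type identity that cancels the $e^{-y^2}$ weight. Handling this normalization carefully is the crux of the proof.
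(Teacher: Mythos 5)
The paper does not actually prove this statement: it is the classical Segal--Bargmann theorem in Hall's formulation, quoted with a citation to Segal, Bargmann and \cite{Hall}. The closest thing to a proof in the paper is the argument for Theorem \ref{main-Coherent-state-transforms} in Section 6, which follows the same Hermite-basis strategy you propose, so your overall plan (commutativity, isometry on a basis, density) is the right one; your step (i) is fine as written.

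There is, however, a concrete error in steps (ii)--(iii). For this version of the transform (Hall's ``C-version'', with measure $\pi^{-d/2}e^{-y^2}\,dx\,dy$ depending only on $\operatorname{Im}z$), the Hermite functions do \emph{not} map to monomials. As the paper's own Lemma \ref{factor-onto} shows, $\exp(\tfrac{\Delta_x}{2})\bigl[H_{\mathrm k}(x)e^{-|x|^2/2}\bigr]=2^{-d/2}x^{\mathrm k}e^{-|x|^2/4}$, so $U$ sends the Hermite function to $2^{-d/2}z^{\mathrm k}e^{-z^2/4}$. Moreover the monomials $z^{\mathrm k}$ are not even elements of $\mathcal{H}L^{2}(\mathbb{C}^{d},\pi^{-d/2}e^{-y^{2}}dxdy)$: the weight $e^{-y^2}$ gives no decay in $x$, so $\int_{\mathbb{C}^d}|z^{\mathrm k}|^2e^{-y^2}\,dx\,dy=\infty$. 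Hence ``$U$ maps an orthonormal basis to the orthonormal family $\{z^n/\sqrt{n!}\}$'' and ``the monomials span a dense subspace'' both fail as stated. The repair is to work with the family $\psi_{\mathrm k}(z)=z^{\mathrm k}e^{-z^{2}/4}$: since $|e^{-z^{2}/4}|^{2}e^{-y^{2}}=e^{-(x^{2}-y^{2})/2}e^{-y^{2}}=e^{-|z|^{2}/2}$, the inner products of the $\psi_{\mathrm k}$ in the Hall space reduce to integrals of $\bar z^{\mathrm l}z^{\mathrm k}$ against the rotation-invariant Gaussian $e^{-|z|^{2}/2}$, which gives orthogonality, the correct norms matching $\|H_{\mathrm k}e^{-|x|^2/2}\|_{L^2}$, and (via the unitary multiplier $F\mapsto Fe^{z^{2}/4}$ onto the standard Fock space, where polynomials are dense) the completeness needed for surjectivity. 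With that substitution your argument goes through; as written, the isometry and surjectivity steps rest on a false identification of the image basis.
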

The result in Theorem  \ref{Hall} is a slight modification of results of Segal \cite{Segal} and Bargmann \cite{Bargmann}.
For more details, we refer the reader to \cite{Hall}.

\subsection{The generalized partial-slice monogenic case}

 To obtain  an analogous   of Theorem \ref{Segal-Bargmann-theorem} in the setting of  generalized partial-slice monogenic functions, we  replace the  analytic continuation $\mathcal C$ by the extension which is the suitable one in this context, namely the CK-extension and we consider the diagram:
$$\xymatrix{
L^{2}(\mathbb{R}^{p+1}, dx)\otimes \mathbb{C}_{p+q} \ar[dr]_{?} \ar[r]^{\exp\frac{\Delta_{x}}{2}}
                &  \widetilde{\mathcal{A}}(\mathbb{R}^{p+1})\otimes \mathbb{C}_{p+q} \ar[d]^{CK}   \\
                & \mathcal{GSM}(\mathbb{R}^{p+q+1}).           }$$
where   $\widetilde{\mathcal{A}}(\mathbb{R}^{p+1})$  denotes the image of $L^{2}(\mathbb{R}^{p+1}, dx)$ by the operator $\exp\frac{\Delta_{x}}{2}$ as in Theorem \ref{Segal-Bargmann-theorem}.

To this end this, we first need  to show the first issue
\begin{equation}\label{inclusion}
\widetilde{\mathcal{A}}(\mathbb{R}^{p+1})\otimes \mathbb{C}_{p+q} \subset \mathcal{AGSM}(\mathbb{R}^{p+1}),
\end{equation}
where  $\mathcal{AGSM}(\mathbb{R}^{p+1})$  denotes  the set of all $\mathbb{C}_{p+q}$-valued  real analytic  functions defined in $\mathbb{R}^{p+1}$ with unique generalized partial-slice monogenic  extensions to $\mathbb{R}^{p+q+1}$.

By Theorem \ref{Segal-Bargmann-theorem}, it follows that $\widetilde{\mathcal{A}}(\mathbb{R}^{p+1}) \subset\mathcal{A}(\mathbb{R}^{p+1})$, and so all functions in $\widetilde{\mathcal{A}}(\mathbb{R}^{p+1})$
are   real analytic  and such that their Taylor series about $0$ converge as a multiple power series in $\mathbb{R}^{p+1}$. Hence, Corollary \ref{slice-Cauchy-Kovalevsky-extension-entire} yields the inclusion in (\ref{inclusion}).

Next, one has to give a suitable definition of transform, generalizing the classic Segal-Bargmann transform in (\ref{Segal-Bargmann-transform-def-holo}).

Observe that
$$\Delta_{x} e^{i\langle x, \xi \rangle}=-|\xi|^{2}e^{i\langle x, \xi \rangle},$$
$$\exp(\frac{\Delta_{x}}{2}) e^{i\langle x, \xi \rangle}=  e^{-\frac{|\xi|^{2}}{2}} e^{i\langle x, \xi \rangle},$$
and
$$e^{-\frac{|x|^{2}}{2}}=  \frac{1}{ (2\pi)^{(p+1)/2} }\int_{\mathbb{R}^{p+1} } e^{-\frac{|\xi|^{2}}{2}} e^{i\langle x, \xi \rangle} d\xi.$$
Hence,
$$CK[\rho_{1}](\bx)=\exp(\underline{y}D_{x}) \rho_{1}(x)=\frac{1}{ (2\pi)^{p+1 } }\int_{\mathbb{R}^{p+1} } e(\bx, \xi)e^{-\frac{|\xi|^{2}}{2}}d\xi. $$

 Inspired by the formula above, we give the following definition in the setting of generalized partial-slice
monogenic functions.

\begin{definition} \label{definition-Segal-Bargmann-transform}
 For $f\in L^2(\mathbb{R}^{p+1}, dx)\otimes \mathbb{C}_{p+q}$, we define  the Segal-Bargmann  transform (or coherent state transform ) $U_{p,q}$ of $f$ as
 $$U_{p,q} [f] (\bx)=\frac{1}{ (2\pi)^{p+1} }\int_{\mathbb{R}^{p+1} } \Big( \int_{\mathbb{R}^{p+1} }e(\bx, \xi)  e^{-\frac{|\xi|^{2}}{2}} e^{-i\langle \zeta, \xi \rangle}d\xi \Big)  f(\zeta) d\zeta.$$
 \end{definition}
\begin{lemma} \label{partial-slice-monogenic}
If  $f\in L^2(\mathbb{R}^{p+1}, dx)\otimes \mathbb{C}_{p+q}$,  then $ U_{p,q} [f]$ is a generalized partial-slice monogenic function on $\mathbb{R}^{p+q+1}$.
\end{lemma}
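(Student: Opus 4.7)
The plan is to prove monogenicity by exchanging the differential operator $D_{\underline{\omega}}=D_{x}+\underline{\omega}\partial_{r}$ with the two nested integrals defining $U_{p,q}[f]$, leveraging the fact that by Proposition \ref{example-e} the kernel $e(\bx,\xi)$ is, for every fixed $\xi\in\mathbb{R}^{p+1}$, generalized partial-slice monogenic in $\bx$; in particular $D_{\underline{\omega}}e(\bx,\xi)=0$ on each half-plane $\mathrm{H}_{\underline{\omega}}$.

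First I would rewrite
$$U_{p,q}[f](\bx)=\int_{\mathbb{R}^{p+1}} K(\bx,\zeta)\, f(\zeta)\, d\zeta,\qquad K(\bx,\zeta):=\frac{1}{(2\pi)^{p+1}}\int_{\mathbb{R}^{p+1}} e(\bx,\xi)\, e^{-|\xi|^{2}/2}\, e^{-i\langle\zeta,\xi\rangle}\, d\xi,$$
so that, up to the constant $(2\pi)^{(p+1)/2}$, $K(\bx,\cdot)$ is the Fourier transform in $\xi$ of $e(\bx,\xi)\,e^{-|\xi|^{2}/2}$. From \eqref{Representation-e} one reads off the pointwise bounds $|\partial_{\bx}^{\alpha}e(\bx,\xi)|\le C_{\alpha}(1+|\xi|)^{|\alpha|}e^{|\underline{y}||\xi|}$ for every multi-index $\alpha$ in the variables $x_{0},\ldots,x_{p+q}$. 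Multiplied by $e^{-|\xi|^{2}/2}$ these are Schwartz in $\xi$ with seminorms locally bounded in $\bx$; this legitimizes differentiating under the inner integral and, together with $D_{\underline{\omega}}e(\bx,\xi)=0$, shows that $K(\bx,\zeta)$ is $C^{\infty}$ and generalized partial-slice monogenic in $\bx$ for every fixed $\zeta$.

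Next, Plancherel's identity in the $\xi$-variable gives
$$\|\partial_{\bx}^{\alpha}K(\bx,\cdot)\|_{L^{2}(\mathbb{R}^{p+1})}^{2}=\frac{1}{(2\pi)^{p+1}}\int_{\mathbb{R}^{p+1}}|\partial_{\bx}^{\alpha}(e(\bx,\xi)\,e^{-|\xi|^{2}/2})|^{2}\, d\xi,$$
whose right-hand side is finite and locally bounded in $\bx$ thanks to the Gaussian decay. Cauchy-Schwarz then guarantees that $U_{p,q}[f](\bx)$ is well-defined for $f\in L^{2}$, and a standard difference-quotient argument combined with dominated convergence, using the $L^{2}$-boundedness of $\partial_{\bx}^{\alpha}K(\bx,\cdot)$ on small neighborhoods of any $\bx$, allows one to move $D_{\underline{\omega}}$ inside the outer integral. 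Consequently,
$$D_{\underline{\omega}}U_{p,q}[f](\bx)=\int_{\mathbb{R}^{p+1}}D_{\underline{\omega}}K(\bx,\zeta)\, f(\zeta)\, d\zeta=0\quad\text{for every }\underline{\omega}\in\mathbb{S},$$
which gives $U_{p,q}[f]\in\mathcal{GSM}(\mathbb{R}^{p+q+1})$.

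The main obstacle is the rigorous justification of these two interchanges, especially the outer one, since $f$ is only $L^{2}$ and a bare $L^{1}$-dominated convergence argument is unavailable. The resolution is to pair the $L^{2}$-regularity of $f$ with the fact that $K(\bx,\cdot)$ and all its $\bx$-derivatives lie in $L^{2}(\mathbb{R}^{p+1})$ with locally uniformly bounded norms, established via Plancherel, and then invoke Cauchy-Schwarz.
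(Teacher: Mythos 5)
Your proposal is correct and follows essentially the same route as the paper: both prove monogenicity by differentiating under the two integrals and using that $D_{\underline{\omega}}e(\bx,\xi)=0$ for each fixed $\xi$. The only difference is that the paper dispatches the interchange with the remark that the Gaussian factor ensures integrability of all derivatives, whereas you supply the missing justification for the outer integral (where $f$ is only $L^{2}$) via Plancherel bounds on $\partial_{\bx}^{\alpha}K(\bx,\cdot)$ and Cauchy--Schwarz, which is a welcome sharpening of the same argument.
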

 \begin{proof}
First of all, we note that the gaussian factor in the integrand ensures the integrability of all of its derivatives. Then, for every $\underline{\omega}\in\mathbb S$, we get
$$D_{\underline{\omega}}U_{p,q} [f] (\bx)=\frac{1}{ (2\pi)^{p+1} }\int_{\mathbb{R}^{p+1} } \Big( \int_{\mathbb{R}^{p+1} }(D_{\underline{\omega}}e(\bx, \xi))  e^{-\frac{|\xi|^{2}}{2}} e^{-i\langle \zeta, \xi \rangle}d\xi \Big)  f(\zeta) d\zeta=0,$$
i.e., $ U_{p,q} [f]$ is generalized partial-slice monogenic  on $\mathbb{R}^{p+q+1}$.
 \end{proof}

 \begin{lemma}\label{remark-Segal-Bargmann-transform}
 For $f\in \mathcal{S}(\mathbb{R}^{p+1})\otimes \mathbb{C}_{p+q}$,    $ U_{p,q} [f]$  can be rewritten as
 $$U_{p,q} [f]  =CK \circ \exp(\frac{\Delta_{x}}{2}) [f],$$
and
\begin{eqnarray}\label{FCK}
 U_{p,q}[f] (\bx)
=\frac{1}{ (2\pi)^{(p+1)/2} }\int_{\mathbb{R}^{p+1} }  e^{-\frac{|\xi|^{2}}{2}} e(\bx, \xi) \widehat{f}(\xi)d\xi,
\end{eqnarray}
where $\bx=x+\underline{y}$ and $\widehat{f}$ denotes the standard Fourier transform of $f$ in $\mathbb{R}^{p+1}$.
 \end{lemma}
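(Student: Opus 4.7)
The plan is to first establish the Fourier-side representation (\ref{FCK}) via Fubini's theorem, and then to deduce the factorization $U_{p,q}[f] = CK\circ\exp(\tfrac{\Delta_x}{2})[f]$ by combining (\ref{FCK}) with the Identity Theorem.

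For (\ref{FCK}), I would apply Fubini's theorem to swap the order of integration in Definition \ref{definition-Segal-Bargmann-transform}. Joint integrability of the integrand $e(\bx,\xi)\, e^{-|\xi|^{2}/2}\, e^{-i\langle\zeta,\xi\rangle}\, f(\zeta)$ in the variables $(\xi,\zeta)$ is guaranteed by two ingredients: the Schwartz decay of $f$, and the pointwise bound $|e(\bx,\xi)| \leq C_{p+q}\, e^{|\underline{y}||\xi|}$ that follows from the explicit formula (\ref{Representation-e}) together with Proposition \ref{norm-product}; the factor $e^{-|\xi|^{2}/2}$ defeats the at-most-exponential growth in $\xi$ for every fixed $\bx$. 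After exchanging the order, the inner $\zeta$-integral equals $(2\pi)^{(p+1)/2}\widehat{f}(\xi)$ by the chosen normalization of the Fourier transform, and (\ref{FCK}) follows.

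For the factorization identity, I would argue by uniqueness of the generalized partial-slice monogenic extension. On one hand, $U_{p,q}[f]$ is generalized partial-slice monogenic on all of $\mathbb{R}^{p+q+1}$ by Lemma \ref{partial-slice-monogenic}. On the other hand, since the heat flow preserves the Schwartz class, $\exp(\tfrac{\Delta_x}{2})[f]\in\mathcal{S}(\mathbb{R}^{p+1})\otimes\mathbb{C}_{p+q}$, so Proposition \ref{Ck-entire-condition-Schwarz} (equivalently, the discussion following (\ref{inclusion}) combined with Corollary \ref{slice-Cauchy-Kovalevsky-extension-entire}) guarantees that $CK[\exp(\tfrac{\Delta_x}{2})[f]]$ is a well-defined generalized partial-slice monogenic function on $\mathbb{R}^{p+q+1}$. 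Restricting (\ref{FCK}) to $\bx = x \in \mathbb{R}^{p+1}$ and using $e(x,\xi)=e^{i\langle x,\xi\rangle}$ from Proposition \ref{example-e}, together with Fourier inversion for Schwartz functions, gives
\begin{equation*}
U_{p,q}[f](x) \;=\; \frac{1}{(2\pi)^{(p+1)/2}}\int_{\mathbb{R}^{p+1}} e^{-|\xi|^{2}/2}\, e^{i\langle x,\xi\rangle}\, \widehat{f}(\xi)\, d\xi \;=\; \exp\!\bigl(\tfrac{\Delta_x}{2}\bigr)[f](x),
\end{equation*}
which is precisely the restriction of $CK\circ\exp(\tfrac{\Delta_x}{2})[f]$ to $\mathbb{R}^{p+1}$. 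Since $\mathbb{R}^{p+1}$ is a $(p+1)$-dimensional smooth manifold contained in $(\mathbb{R}^{p+q+1})_{\underline{\omega}}$ for every $\underline{\omega}\in\mathbb{S}$, the Identity Theorem (Theorem \ref{Identity-theorem}) forces the two generalized partial-slice monogenic functions to agree on the whole slice domain $\mathbb{R}^{p+q+1}$.

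The main technical point requiring care is the well-definedness of $CK[\exp(\tfrac{\Delta_x}{2})[f]]$ on all of $\mathbb{R}^{p+q+1}$, which rests on the Schwartz version of the CK-extension (Proposition \ref{Ck-entire-condition-Schwarz}); everything else is routine Fubini together with Fourier inversion and the Identity Theorem.
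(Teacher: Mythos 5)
Your proof is correct, and while the first half (establishing \eqref{FCK} by Fubini and recognizing the inner $\zeta$-integral as $(2\pi)^{(p+1)/2}\widehat{f}(\xi)$) coincides with the paper's argument, your derivation of the factorization $U_{p,q}[f]=CK\circ\exp(\tfrac{\Delta_x}{2})[f]$ takes a genuinely different route. The paper proceeds by rewriting the integrand of \eqref{FCK} as $CK[e^{i\langle \cdot,\xi\rangle}e^{-|\xi|^2/2}](\bx)$, pulling the $CK$ operator outside the $\xi$-integral, and then using $\exp(\tfrac{\Delta_x}{2})e^{i\langle x,\xi\rangle}=e^{-|\xi|^2/2}e^{i\langle x,\xi\rangle}$ together with Fourier inversion; the key (and tacitly justified) step there is the interchange of the infinite series of differential operators defining $CK$ with the integral over $\xi$. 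You instead verify the identity only on the real slice $\underline{y}=0$, where both sides reduce to $\exp(\tfrac{\Delta_x}{2})[f](x)$ by Fourier inversion, and then propagate the equality to all of $\mathbb{R}^{p+q+1}$ via the Identity Theorem, having first secured that both sides are globally generalized partial-slice monogenic (Lemma \ref{partial-slice-monogenic} for $U_{p,q}[f]$, and Proposition \ref{Ck-entire-condition-Schwarz} plus preservation of the Schwartz class under the heat flow for the right-hand side). Your route trades the operator--integral interchange for an appeal to uniqueness of the monogenic extension; it is arguably cleaner on that technical point, at the cost of needing the a priori global monogenicity of both sides, which you correctly supply. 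Both arguments are sound and rest on the same underlying facts.
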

\begin{proof}
For $f\in \mathcal{S}(\mathbb{R}^{p+1})\otimes \mathbb{C}_{p+q}$, it holds that for $\bx=x+\underline{y}\in \mathbb{R}^{p+q+1}$
\begin{eqnarray*}
 U_{p,q} [f] (\bx)
&=&\frac{1}{ (2\pi)^{p+1} }\int_{\mathbb{R}^{p+1} } \Big( \int_{\mathbb{R}^{p+1} }e(\bx, \xi)  e^{-\frac{|\xi|^{2}}{2}} e^{-i\langle \zeta, \xi \rangle}f(\zeta)d\xi \Big)  d\zeta
\\
&=&\frac{1}{ (2\pi)^{p+1} }\int_{\mathbb{R}^{p+1} }e(\bx, \xi) e^{-\frac{|\xi|^{2}}{2}}   d\xi
\int_{\mathbb{R}^{p+1} } e^{-i\langle \xi,\zeta \rangle} f(\zeta) d\zeta
\\
 &=&\frac{1}{ (2\pi)^{(p+1)/2} }\int_{\mathbb{R}^{p+1} }e(\bx, \xi) e^{-\frac{|\xi|^{2}}{2}}   \widehat{f}(\xi)d\xi
\\
 &=&\frac{1}{ (2\pi)^{(p+1)/2} }\int_{\mathbb{R}^{p+1} } CK  [e^{i\langle x, \xi \rangle}e^{-\frac{|\xi|^{2}}{2}}](\bx) \widehat{f}(\xi)d\xi
\\
 &=&CK    \Big[ \frac{1}{ (2\pi)^{(p+1)/2} }\int_{\mathbb{R}^{p+1} }e^{i\langle x, \xi \rangle}e^{-\frac{|\xi|^{2}}{2}} \widehat{f}(\xi)d\xi  \Big](\bx)
 \\
 &=& CK  \circ \exp\frac{\Delta_{x}}{2} \Big[ \frac{1}{ (2\pi)^{(p+1)/2} }\int_{\mathbb{R}^{p+1} }e^{i\langle x, \xi \rangle} \widehat{f}(\xi)d\xi  \Big](\bx)
 \\
 &=&CK  \circ \exp\frac{\Delta_{x}}{2} [ f](\bx),
 \end{eqnarray*}
as asserted.
\end{proof}

Note that \eqref{FCK} can be further rewritten as:
\begin{eqnarray*}
 U_{p,q}[f] (\bx)
&=&\frac{1}{ (2\pi)^{(p+1)/2} }\int_{\mathbb{R}^{p+1} }  e^{-\frac{|\xi|^{2}}{2}} e(\bx, \xi) \widehat{f}(\xi)d\xi
\\
&\xlongequal{\text{(\ref{Representation-e})}}&\frac{1}{ (2\pi)^{(p+1)/2} }\int_{\mathbb{R}^{p+1} }\cosh(|\underline{y}| |\xi|)   e^{-\frac{|\xi|^{2}}{2}} e^{i\langle x, \xi \rangle} \widehat{f} (\xi)d\xi
 \\
 && +i \frac{\underline{y}}{|\underline{y}|} \frac{1}{ (2\pi)^{(p+1)/2} }\int_{\mathbb{R}^{p+1} } \frac{\xi}{|\xi|} \sinh(|\underline{y}| |\xi|)  e^{-\frac{|\xi|^{2}}{2}} e^{i\langle x, \xi \rangle}  \widehat{f}(\xi)d\xi,
\end{eqnarray*}
where $\bx=x+\underline{y}$.
 \begin{proposition}
Let $f\in \mathcal{S}(\mathbb{R}^{p+1})\otimes \mathbb{C}_{p+q}$  be such that
\begin{equation}\label{condition-S}
e(\underline{y}, \cdot) \widehat{f}(\cdot) \in \mathcal{S}(\mathbb{R}^{p+1})\otimes \mathbb{C}_{p+q}, \quad \text{for\ all\ }\underline{y}\in \mathbb{R}^{q}.\end{equation}
Then
$$F(\bx)=\frac{1}{ (2\pi)^{(p+1)/2} }\int_{\mathbb{R}^{p+1} }  e(\bx, \xi)  \widehat{f}(\xi)d\xi
=\frac{1}{ (2\pi)^{(p+1)/2} }\int_{\mathbb{R}^{p+1} }  e^{i\langle x, \xi \rangle} e(\underline{y}, \xi)  \widehat{f}(\xi)d\xi$$
 defines a generalized partial-slice monogenic  function  on $\mathbb{R}^{p+q+1}$ satisfying $F(x,\underline{0})=f(x)$.
  \end{proposition}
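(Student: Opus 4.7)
The plan is to verify three things: the equivalence of the two integral expressions for $F$, the boundary identity $F(x,\underline{0})=f(x)$, and membership of $F$ in $\mathcal{GSM}(\mathbb{R}^{p+q+1})$. The equality of the two forms of $F$ is immediate from the factorization $e(\bx,\xi)=e^{i\langle x,\xi\rangle}e(\underline{y},\xi)$ in \eqref{proposition-e-1}, so I would fix the second representation for the remainder of the argument. Well-definedness of the integral is then clear from the hypothesis \eqref{condition-S}: for each fixed $\underline{y}\in\mathbb{R}^{q}$ the integrand is the product of the unimodular factor $e^{i\langle x,\xi\rangle}$ with the Schwartz function $e(\underline{y},\xi)\widehat{f}(\xi)$, hence absolutely integrable.

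For the boundary value, setting $\underline{y}=\underline{0}$ in \eqref{Representation-e} gives $e(\underline{0},\xi)=\cosh(0)=1$ (the apparently singular term $\underline{y}/|\underline{y}|$ is multiplied by $\sinh(0)=0$), so
\[
F(x,\underline{0}) = \frac{1}{(2\pi)^{(p+1)/2}}\int_{\mathbb{R}^{p+1}}e^{i\langle x,\xi\rangle}\widehat{f}(\xi)\,d\xi = f(x)
\]
by Fourier inversion applied to the Schwartz function $f$.

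For generalized partial-slice monogenicity, I would fix an arbitrary $\underline{\omega}\in\mathbb{S}$ and a point $\bx=x+r\underline{\omega}\in\mathbb{R}^{p+2}$, and differentiate under the integral sign to obtain
\[
D_{\underline{\omega}}F(\bx) = \frac{1}{(2\pi)^{(p+1)/2}}\int_{\mathbb{R}^{p+1}}\bigl(D_{\underline{\omega}}e(\bx,\xi)\bigr)\widehat{f}(\xi)\,d\xi = 0,
\]
where the vanishing of the integrand for each fixed $\xi$ follows from Proposition \ref{example-e}, which states that $\bx\mapsto e(\bx,\xi)=CK[e^{i\langle\cdot,\xi\rangle}](\bx)$ is generalized partial-slice monogenic.

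The main technical point is justifying this interchange of differentiation and integration. Differentiations in $x_{j}$ only produce polynomial factors $i\xi_{j}$, and polynomial multiplication preserves the Schwartz class, so no issue arises there. The $\partial_{r}$-derivative of $e(r\underline{\omega},\xi)$ produces terms bounded by $|\xi|\cosh(r|\xi|)+|\xi|\sinh(r|\xi|)\leq 2|\xi|\,e^{r|\xi|}$, and higher-order derivatives yield bounds of the form $C|\xi|^{N}e^{r|\xi|}$. Using the identity $|e(\underline{y}',\xi)|^{2}=\cosh(2|\underline{y}'||\xi|)$, which follows from \eqref{proposition-e-3} together with Proposition \ref{norm-product}, one can dominate $e^{r|\xi|}$ on the compact set $\{|\underline{y}|\leq R\}$ by a constant multiple of $|e(R\underline{\omega}_{0},\xi)|$ for any fixed $\underline{\omega}_{0}\in\mathbb{S}$. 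Consequently the derivatives of the integrand are dominated, locally uniformly in $\bx$, by $C|\xi|^{N}|e(R\underline{\omega}_{0},\xi)\widehat{f}(\xi)|$; since \eqref{condition-S} applied with $\underline{y}=R\underline{\omega}_{0}$ gives that $e(R\underline{\omega}_{0},\cdot)\widehat{f}(\cdot)$ is Schwartz, multiplication by $|\xi|^{N}$ keeps it integrable. Dominated convergence then legitimizes the interchange and completes the proof.
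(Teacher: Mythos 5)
Your proof follows essentially the same route as the paper's: Fourier inversion for the boundary identity $F(x,\underline{0})=f(x)$, and differentiation under the integral sign using that $e(\cdot,\xi)$ is generalized partial-slice monogenic for each fixed $\xi$; in fact you supply the domination argument that the paper merely asserts. The one step that needs repair is the final bound $e^{r|\xi|}\,|\widehat{f}(\xi)|\leq C\,|e(R\underline{\omega}_{0},\xi)\widehat{f}(\xi)|$: in a Clifford algebra $|\lambda\mu|$ does not control $|\lambda|\,|\mu|$ from below, and here one only gets $|e(R\underline{\omega}_{0},\xi)\widehat{f}(\xi)|\geq c\,e^{-R|\xi|}|\widehat{f}(\xi)|$, which falls short of the needed $e^{R|\xi|}|\widehat{f}(\xi)|$ by a factor $e^{2R|\xi|}$. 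The clean fix uses the same hypothesis: applying \eqref{condition-S} at $\underline{y}$ and $-\underline{y}$ and averaging shows that $\cosh(|\underline{y}||\xi|)\widehat{f}(\xi)$ is Schwartz for every $\underline{y}$, and this scalar factor dominates $e^{r|\xi|}|\widehat{f}(\xi)|$ directly, so dominated convergence goes through as you intended.
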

  \begin{proof}
 For $f\in \mathcal{S}(\mathbb{R}^{p+1})\otimes \mathbb{C}_{p+q}$, we have by definition
$$F(x,\underline{0})
=\frac{1}{ (2\pi)^{(p+1)/2} }\int_{\mathbb{R}^{p+1} }  e^{i\langle x, \xi \rangle}    \widehat{f}(\xi)d\xi=f(x).$$
Note that, for each $\xi\in \mathbb{R}^{p+1}$, $e( \cdot, \xi)$ is left generalized partial-slice   monogenic function  on $ \mathbb{R}^{p+q+1}$ by Proposition \ref{example-e}.  The hypothesis  (\ref{condition-S}) ensures that  $F(\bx)$ is well-defined on   $\mathbb{R}^{p+q+1}$ and we  can differentiate under the integral  in  $F$ and thus we can conclude that $F$  is generalized partial-slice monogenic.
\end{proof}

Now we can state the  main result in this section.

 \begin{theorem}\label{main-Coherent-state-transforms}
 For $f\in L^2(\mathbb{R}^{p+1}, dx)\otimes \mathbb{C}_{p+q}$, we have
$$U_{p,q} [f] (\bx) =CK \circ \exp(\frac{\Delta_{x}}{2}) [f](\bx). $$
Then,  in the commutative diagram
$$\xymatrix{
L^{2}(\mathbb{R}^{p+1}, dx)\otimes \mathbb{C}_{p+q} \ar[dr]_{U_{p,q}} \ar[r]^{\exp\frac{\Delta_{x}}{2}}
                & \widetilde{\mathcal{A}}(\mathbb{R}^{p+1})\otimes \mathbb{C}_{p+q} \ar[d]^{CK}  \\
                & \mathcal{GSM} L^{2}(\mathbb{R}^{p+q+1}, d\mu),           }$$
the map $U_{p,q}:L^{2}(\mathbb{R}^{p+1}, dx)\otimes \mathbb{C}_{p+q}\rightarrow \mathcal{GSM} L^{2}(\mathbb{R}^{p+q+1}, d\mu)$  is a unitary isomorphism, where $\widetilde{\mathcal{A}}(\mathbb{R}^{p+1})$  denotes the image of $L^{2}(\mathbb{R}^{p+1}, dx)$ by the operator $\exp\frac{\Delta_{x}}{2}$, and the set
 $$\mathcal{GSM} L^{2}(\mathbb{R}^{p+q+1}, d\mu) = \mathcal{GSM}(\mathbb{R}^{p+q+1}) \cap (L^{2}(\mathbb{R}^{p+q+1}, d\mu)\otimes \mathbb{C}_{p+q})$$ is the  Hilbert space of generalized partial-slice monogenic   functions on $\mathbb{R}^{p+q+1}$ which are $L^{2}(\mathbb{R}^{p+q+1}, d\mu)\otimes \mathbb{C}_{p+q}$ with respect to
the measure
 $$d\mu(\bx) = \frac{2}{\sqrt{\pi}}\frac{1}{ |\mathbb{S}|}  \frac{e^{-|\underline{y}|^{2}}}{|\underline{y}|^{q-1}} dxd\underline{y}, \quad \bx=x+\underline{y},$$
with $|\mathbb{S}|=2{\pi^{\frac q2}}/{\Gamma\left(\frac q2\right)}$.
 \end{theorem}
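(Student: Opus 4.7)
The plan is to establish three things in turn: the identity $U_{p,q}=CK\circ\exp(\Delta_x/2)$ on $L^2(\mathbb R^{p+1},dx)\otimes\mathbb C_{p+q}$, the isometry $\|U_{p,q}[f]\|^2_{L^2(d\mu)}=\|f\|^2_{L^2}$, and the surjectivity of $U_{p,q}$ onto $\mathcal{GSM}L^2(\mathbb R^{p+q+1},d\mu)$.

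For the first step, Lemma \ref{remark-Segal-Bargmann-transform} already gives the desired equality on $\mathcal S(\mathbb R^{p+1})\otimes\mathbb C_{p+q}$. I would extend it to all of $L^2$ by density, using the Gaussian damping in the integrand to guarantee that both sides are $L^2$-continuous in $f$.

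For the isometry, I would compute $\|U_{p,q}[f]\|^2_{L^2(d\mu)}$ directly for Schwartz $f$ starting from the Fourier representation (\ref{FCK}). Combining (\ref{proposition-e-1}) with $e(\underline y,\xi)^\dag=e(\underline y,\xi)$ from (\ref{proposition-e-proof-1}) yields
\begin{equation*}
e(\bx,\xi)^{\dag}e(\bx,\xi') = e^{i\langle x,\xi'-\xi\rangle}\,e(\underline y,\xi)\,e(\underline y,\xi').
\end{equation*}
Fubini (justified by the Gaussian weights) together with the distributional identity $\int_{\mathbb R^{p+1}}e^{i\langle x,\xi'-\xi\rangle}\,dx=(2\pi)^{p+1}\delta(\xi-\xi')$ collapse the double spectral integral to its diagonal. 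On the diagonal $\xi=\xi'$, formula (\ref{proposition-e-3}) gives $e(\underline y,\xi)^2=e(2\underline y,\xi)$, whose $i(\underline y/|\underline y|)(\xi/|\xi|)\sinh$ component is odd in $\underline\omega$ and therefore vanishes after integration against $d\sigma(\underline\omega)$. Polar coordinates $\underline y=r\underline\omega$ with $d\underline y=r^{q-1}dr\,d\sigma(\underline\omega)$ absorb the factor $|\underline y|^{1-q}$ in $d\mu$, and the computation reduces to the Gaussian integral
\begin{equation*}
\int_0^{\infty}\cosh(2r|\xi|)e^{-r^2}\,dr = \frac{\sqrt{\pi}}{2}e^{|\xi|^2}.
\end{equation*}
The prefactor $\frac{2}{\sqrt{\pi}|\mathbb S|}$ of $d\mu$ multiplied by the sphere volume $|\mathbb S|$ produces $e^{|\xi|^2}$, which exactly cancels the paired Gaussian factor $e^{-|\xi|^2}$ coming from the two copies of $e^{-|\xi|^2/2}$ in the integrand. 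Plancherel then yields $\|U_{p,q}[f]\|^2_{L^2(d\mu)}=\int|\widehat f(\xi)|^2d\xi=\|f\|^2_{L^2}$, and the isometry passes to $L^2$ by density.

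For the surjectivity, the isometry gives closed range. Given $g\in\mathcal{GSM}L^2(d\mu)$, Corollary \ref{slice-Cauchy-Kovalevsky-extension-entire-Identity} represents $g=CK[g_0]$ with $g_0:=g|_{\mathbb R^{p+1}}$, so it suffices to produce $f\in L^2(\mathbb R^{p+1},dx)\otimes\mathbb C_{p+q}$ with $\exp(\Delta_x/2)[f]=g_0$. I would decompose $g_0=\sum_A g_{0,A}I_A$ via the Splitting Lemma (Lemma \ref{Splitting-lemma}) and apply the classical Segal--Bargmann theorem (Theorem \ref{Segal-Bargmann-theorem}) to each complex component of each $g_{0,A}$. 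The main obstacle is to certify that the global finiteness $\|g\|_{L^2(d\mu)}<\infty$ forces the holomorphic continuation of each $g_{0,A}$ to $\mathbb C^{p+1}$ to lie in the classical Fock space, so that Theorem \ref{Segal-Bargmann-theorem} applies and produces an $L^2$ preimage whose norm is controlled by $\|g\|_{L^2(d\mu)}$; this slice-to-holomorphic norm comparison is the technical heart of the proof.
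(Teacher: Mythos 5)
Your first two steps coincide with the paper's argument: the identity $U_{p,q}=CK\circ\exp(\Delta_x/2)$ on Schwartz functions plus density is Lemma \ref{remark-Segal-Bargmann-transform}, and your isometry computation (the $e(\bx,\xi)^\dag e(\bx,\zeta)$ factorization, the collapse to the diagonal, the vanishing of the odd $\sinh$ term, polar coordinates, the Gaussian integral $\int_0^\infty\cosh(2r|\xi|)e^{-r^2}dr=\tfrac{\sqrt\pi}{2}e^{|\xi|^2}$, and Plancherel) is exactly the paper's Lemma \ref{isometry-Schwarz}.

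The surjectivity step, however, contains a genuine gap, and it is precisely the step you flag as ``the technical heart'' and then do not carry out. You propose to reduce to the classical Segal--Bargmann theorem componentwise, which requires showing that $\|g\|_{L^2(\mathbb{R}^{p+q+1},d\mu)}<\infty$ forces the \emph{holomorphic continuation} $\mathcal{C}[g_{0,A}]$ of each component to lie in the Fock space $\mathcal{H}L^2(\mathbb{C}^{p+1},\pi^{-(p+1)/2}e^{-y^2}dxdy)$. This comparison is not a routine estimate: the two extensions are different operators ($CK=\exp(\underline{y}D_x)$ versus $\mathcal{C}=\exp(i\sum_j y_j\partial_{x_j})$), and the two weighted spaces live on Euclidean spaces of different dimensions ($p+q+1$ versus $2(p+1)$), so there is no direct change of variables relating $d\mu$ to the Fock measure when $p\geq 1$. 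The paper circumvents this entirely: it uses the Hermite functions $\varphi_{\mathrm{k}}=H_{\mathrm{k}}e^{-|x|^2/2}$ (an orthogonal basis of $L^2(\mathbb{R}^{p+1},dx)$, Lemma \ref{factor-onto-orth}), computes $\exp(\Delta_x/2)\varphi_{\mathrm{k}}=2^{-(p+1)/2}x^{\mathrm{k}}e^{-|x|^2/4}$ (Lemma \ref{factor-onto}), and obtains from the already-proved isometry an orthogonal basis $\psi_{\mathrm{k}}e_A=CK[x^{\mathrm{k}}e^{-|x|^2/4}]e_A$ of the image. Given $g\in\mathcal{GSM}L^2(d\mu)$, expanding $g_0(x)e^{|x|^2/4}$ in its Taylor series and applying Corollary \ref{slice-Cauchy-Kovalevsky-extension-entire-Identity} writes $g=\sum_{\mathrm{k}}\psi_{\mathrm{k}}a_{\mathrm{k}}$, and orthogonality converts the condition $g\in L^2(d\mu)$ into $\sum_{\mathrm{k}}\varphi_{\mathrm{k}}a_{\mathrm{k}}\in L^2(\mathbb{R}^{p+1},dx)$, which hands you the preimage directly. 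To complete your proof you would either need to supply the missing slice-to-holomorphic norm comparison (whose validity is unclear as stated) or switch to a basis argument of this kind.
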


The proof of Theorem \ref{main-Coherent-state-transforms} is postponed to the next section. Here we discuss some direct consequences of this result.

 \begin{corollary}
The subspace $\mathcal{GSM} L^{2}(\mathbb{R}^{p+q+1}, d\mu)$ of generalized partial-slice monogenic    functions, which are in $L^{2}(\mathbb{R}^{p+q+1}, d\mu)\otimes \mathbb{C}_{p+q}$,  is a closed subspace of $L^{2}(\mathbb{R}^{p+q+1}, $ $d\mu)\otimes \mathbb{C}_{p+q}$.
  \end{corollary}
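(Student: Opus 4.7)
The strategy is to reduce the closedness assertion to a direct consequence of Theorem~\ref{main-Coherent-state-transforms}. The key observation is that a subspace of a Hilbert space which arises as the isometric image of a complete space is itself complete, and hence closed. So the plan is simply to transport Cauchy sequences back through $U_{p,q}$ to the complete space $L^{2}(\mathbb{R}^{p+1}, dx)\otimes \mathbb{C}_{p+q}$, take the limit there, and push it forward.

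In detail, I would let $(F_n)_{n\in\mathbb{N}}$ be a Cauchy sequence in $\mathcal{GSM} L^{2}(\mathbb{R}^{p+q+1}, d\mu)$ with respect to the ambient norm of $L^{2}(\mathbb{R}^{p+q+1}, d\mu)\otimes \mathbb{C}_{p+q}$. By the surjectivity part of Theorem~\ref{main-Coherent-state-transforms}, there exist uniquely determined $f_n \in L^{2}(\mathbb{R}^{p+1}, dx)\otimes \mathbb{C}_{p+q}$ with $F_n = U_{p,q}[f_n]$. The isometry property immediately gives the identity $\|f_n - f_m\|_{L^{2}(\mathbb{R}^{p+1}, dx)} = \|F_n - F_m\|_{L^{2}(\mathbb{R}^{p+q+1}, d\mu)}$, so $(f_n)$ is Cauchy in the complete space $L^{2}(\mathbb{R}^{p+1}, dx)\otimes \mathbb{C}_{p+q}$ and therefore converges to some limit $f$. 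Continuity of $U_{p,q}$ (again from its isometric character) yields $F_n \to U_{p,q}[f]$ in $L^{2}(d\mu)$; by uniqueness of limits in $L^{2}(d\mu)$, the sequence $(F_n)$ converges to $U_{p,q}[f] \in \mathcal{GSM} L^{2}(\mathbb{R}^{p+q+1}, d\mu)$, which is exactly the closedness claim.

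No substantive obstacle arises: the argument is purely formal once Theorem~\ref{main-Coherent-state-transforms} is available. For completeness I would also mention the alternative direct route, which would establish a pointwise reproducing-kernel-type estimate bounding $|F(\bx)|$ by a constant (depending on $\bx$) times $\|F\|_{L^{2}(d\mu)}$ for $F\in \mathcal{GSM} L^{2}(\mathbb{R}^{p+q+1}, d\mu)$, then upgrade $L^{2}(d\mu)$-convergence to uniform convergence on compact subsets of $\mathbb{R}^{p+q+1}$ and invoke the Weierstrass-type Lemma~\ref{Weierstrass} to identify the limit as generalized partial-slice monogenic. This path is viable but noticeably heavier than the unitarity argument and would only be worth pursuing if the main theorem were not already available at this point of the paper.
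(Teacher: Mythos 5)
Your argument is correct and is exactly the one the paper intends: the corollary is stated as a direct consequence of Theorem \ref{main-Coherent-state-transforms}, whose proof already notes that $U_{p,q}$ is an isometry of the complete space $L^{2}(\mathbb{R}^{p+1}, dx)\otimes \mathbb{C}_{p+q}$ onto $\mathcal{GSM} L^{2}(\mathbb{R}^{p+q+1}, d\mu)$, so the image is complete and hence closed. Your Cauchy-sequence transport is just the standard spelling-out of that observation, and your remark on the heavier reproducing-kernel alternative is accurate but not needed.
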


A  characterization of the range of the heat operator is obtained in terms of generalized partial-slice monogenic
functions is obtained below. It is analogous to that one for monogenic functions \cite[Corollary 3.3]{Qian-17}.
   \begin{corollary}
A real analytic function $F$ on  $\mathbb{R}^{p+1}$ is of the form
$$F=\exp(\frac{\Delta_{x}}{2})[f], \quad  f\in L^{2}(\mathbb{R}^{p+1}, dx)\otimes \mathbb{C}_{p+q},$$
if and only if  its CK-extension to $\mathbb{R}^{p+q+1}$ exists and is $d\mu$-square-integrable, i.e.
$$CK[f]\in \mathcal{GSM} L^{2}(\mathbb{R}^{p+q+1}, d\mu).$$
\end{corollary}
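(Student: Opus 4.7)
I plan to verify three ingredients in turn: (i) the factorization $U_{p,q} = CK\circ\exp(\Delta_x/2)$ on all of $L^{2}(\mathbb{R}^{p+1},dx)\otimes \mathbb{C}_{p+q}$, already proved on Schwartz functions in Lemma \ref{remark-Segal-Bargmann-transform}; (ii) the isometric property; (iii) surjectivity of $U_{p,q}$ onto $\mathcal{GSM}L^{2}(\mathbb{R}^{p+q+1},d\mu)$. Once (ii) is established, (i) extends from $\mathcal{S}(\mathbb{R}^{p+1})\otimes \mathbb{C}_{p+q}$ to all of $L^{2}$ by density, using the continuity supplied by the isometry on both sides (classical Segal--Bargmann provides the continuity of $\exp(\Delta_x/2)$, and (ii) provides that of $U_{p,q}$).

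The heart of the argument is (ii). Starting from the Fourier representation (\ref{FCK}) for $f\in\mathcal{S}(\mathbb{R}^{p+1})\otimes \mathbb{C}_{p+q}$, I would expand
\[
|U_{p,q}[f](\bx)|^{2}=\mathrm{Sc}\bigl(U_{p,q}[f](\bx)^{\dag}\,U_{p,q}[f](\bx)\bigr)
\]
as a double integral over frequency variables $\xi,\eta\in\mathbb{R}^{p+1}$, and use Proposition \ref{proposition-e}, formulas (\ref{proposition-e-1})--(\ref{proposition-e-2}), to factor the kernel as $e(\bx,\xi)^{\dag}e(\bx,\eta)=e^{i\langle x,\eta-\xi\rangle}\,e(\underline{y},\xi)\,e(\underline{y},\eta)$. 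Integrating against $d\mu(\bx)$, the $x$-integration yields the distributional factor $(2\pi)^{p+1}\delta(\eta-\xi)$, collapsing to the diagonal $\xi=\eta$, on which by (\ref{proposition-e-3}) the remaining $\underline{y}$-integrand equals $e(2\underline{y},\xi)=\cosh(2|\underline{y}||\xi|)+i\tfrac{\underline{y}}{|\underline{y}|}\tfrac{\xi}{|\xi|}\sinh(2|\underline{y}||\xi|)$. Passing to spherical coordinates $\underline{y}=r\underline{\omega}$ simplifies the measure to $\frac{2}{\sqrt{\pi}\,|\mathbb{S}|}e^{-r^{2}}dx\,dr\,d\sigma(\underline{\omega})$; the sinh term disappears under Sc because of the zero average $\int_{\mathbb{S}}\underline{\omega}\,d\sigma=0$, and the cosh term contributes the classical Gaussian integral $\int_{0}^{\infty}e^{-r^{2}}\cosh(2r|\xi|)\,dr=\tfrac{\sqrt{\pi}}{2}e^{|\xi|^{2}}$, exactly cancelling the $e^{-|\xi|^{2}}$ prefactor. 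Plancherel on $\mathbb{R}^{p+1}$ then delivers $\|U_{p,q}[f]\|_{d\mu}^{2}=\|\widehat{f}\|_{L^{2}}^{2}=\|f\|_{L^{2}}^{2}$.

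For (iii), the isometry forces the range of $U_{p,q}$ to be closed, so surjectivity reduces to density of the image. Given $g\in\mathcal{GSM}L^{2}(\mathbb{R}^{p+q+1},d\mu)$, Corollary \ref{slice-Cauchy-Kovalevsky-extension-entire-Identity} identifies $g$ with the CK-extension of its real-analytic restriction $g|_{\mathbb{R}^{p+1}}$, so it suffices to show that this restriction lies in $\widetilde{\mathcal{A}}(\mathbb{R}^{p+1})\otimes \mathbb{C}_{p+q}$, i.e.\ in the image of $\exp(\Delta_x/2)$ acting on $L^{2}(\mathbb{R}^{p+1})$. I would apply the Splitting Lemma \ref{Splitting-lemma} along a fixed slice $\mathrm{H}_{\underline{\omega}}$ to reduce the question to $\mathbb{C}_{p+1}$-valued monogenic components on $\mathbb{R}^{p+2}$, then relate their Gaussian-weighted monogenic $L^{2}$ norms (in which the $|\underline{y}|^{-(q-1)}$ factor of $d\mu$ is absorbed by the Jacobian $r^{q-1}$) to the classical Fock norm on $\mathbb{C}^{p+1}$ and invoke the classical Segal--Bargmann theorem (Theorem \ref{Segal-Bargmann-theorem}). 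I expect this surjectivity step to be the main obstacle: one has to match the $q$-dependent normalization constants and the sphere volume $|\mathbb{S}|$ in $d\mu$ precisely with the classical Fock weight $\pi^{-(p+1)/2}e^{-y^{2}}$, which requires the Gaussian cosh integral (that also drove the isometry) to be invoked in reverse.
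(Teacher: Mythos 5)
The first thing to note is that in the paper this statement is not proved from scratch at all: it is listed among the ``direct consequences'' of Theorem \ref{main-Coherent-state-transforms}, from which it follows in one line ($F=\exp(\frac{\Delta_x}{2})[f]$ with $f\in L^2$ iff $CK[F]=U_{p,q}[f]$ lies in the image of the unitary map $U_{p,q}$, which by that theorem is exactly $\mathcal{GSM}L^{2}(\mathbb{R}^{p+q+1},d\mu)$). Your proposal instead re-proves the theorem itself. Your steps (i) and (ii) are essentially identical to the paper's Section 6: the isometry computation via $e(\bx,\xi)^{\dag}e(\bx,\zeta)=e(x,\zeta-\xi)e(\underline{y},\xi)e(\underline{y},\zeta)$, the collapse to the diagonal, the vanishing of the $\sinh$ term by odd symmetry in $\underline{y}$, the Gaussian $\cosh$ integral cancelling $e^{-|\xi|^2}$, and Plancherel is precisely Lemma \ref{isometry-Schwarz}, and the density extension of the factorization of Lemma \ref{remark-Segal-Bargmann-transform} is as in the paper.

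The genuine gap is in your step (iii). After applying the Splitting Lemma \ref{Splitting-lemma}, the components $F_A$ are \emph{monogenic} $\mathbb{C}_{p+1}$-valued functions on a slice of $\mathbb{R}^{p+2}$; they are not holomorphic functions on $\mathbb{C}^{p+1}$, so the classical Segal--Bargmann theorem (Theorem \ref{Segal-Bargmann-theorem}) simply does not apply to them. A statement characterizing the image of $\exp(\frac{\Delta_x}{2})$ on $L^{2}(\mathbb{R}^{p+1})$ in terms of Gaussian-weighted monogenic $L^2$ functions on $\mathbb{R}^{p+2}$ is itself the $q=1$ instance of the theorem you are trying to prove (it is the content of the cited result of Mour\~ao--Nunes--Qian), so your reduction is circular unless you supply that case independently. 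The paper avoids this entirely: surjectivity is proved by showing that $\psi_{\mathrm{k}}=CK[x^{\mathrm{k}}e^{-|x|^2/4}]=2^{(p+1)/2}U_{p,q}[\varphi_{\mathrm{k}}]$ (Lemmas \ref{factor-onto-orth} and \ref{factor-onto}) give an orthogonal basis of the target space, and that any $g\in\mathcal{GSM}L^{2}(\mathbb{R}^{p+q+1},d\mu)$ is determined by its restriction $g_0$ to $\mathbb{R}^{p+1}$ (Corollary \ref{slice-Cauchy-Kovalevsky-extension-entire-Identity}), whose expansion $g_0(x)=\sum_{\mathrm{k}}x^{\mathrm{k}}e^{-|x|^2/4}a_{\mathrm{k}}$ exhibits $g=\sum_{\mathrm{k}}\psi_{\mathrm{k}}a_{\mathrm{k}}$ as a limit of elements of the closed range, with the $d\mu$-integrability of $g$ equivalent, via the isometry, to $\sum_{\mathrm{k}}\varphi_{\mathrm{k}}a_{\mathrm{k}}\in L^{2}(\mathbb{R}^{p+1},dx)\otimes\mathbb{C}_{p+q}$. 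You would need to replace your slice-reduction argument by such a basis argument (or some other self-contained density argument) for the ``only if'' direction of the corollary to be justified.
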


The following two corollaries are the counterparts of analogous results obtained in \cite{Qian-16, Qian-17}.
In  the special case $(p,q)=(0,n)$, we obtain a counterpart of \cite[Theorem 4.3]{Qian-16}.
 \begin{corollary}\label{main-Coherent-state-transforms-0n}
 For $f\in L^2(\mathbb{R}, dx)\otimes \mathbb{C}_{n}$, define
  $$U_{s} [f] (\bx)=\frac{1}{ 2\pi  }\int_{\mathbb{R}  } \Big( \int_{\mathbb{R} }e(\bx, \xi)  e^{-\frac{|\xi|^{2}}{2}} e^{-i\langle \zeta, \xi \rangle}d\xi \Big)  f(\zeta) d\zeta,$$
  where $\bx=x+\underline{y}\in \mathbb{R} \oplus \mathbb{R}^{n}.$

Then,  in the commutative diagram
$$\xymatrix{
L^{2}(\mathbb{R}, dx)\otimes \mathbb{C}_{n} \ar[dr]_{U_s} \ar[r]^{\exp\frac{\Delta_{x}}{2}}
                & \widetilde{\mathcal{A}}(\mathbb{R})\otimes \mathbb{C}_{n} \ar[d]^{M_s}  \\
                & \mathcal{SM} L^{2}(\mathbb{R}^{n+1}, d\mu),           }$$
the map $U_{s}:L^{2}(\mathbb{R}, dx)\otimes \mathbb{C}_{n}\rightarrow \mathcal{SM} L^{2}(\mathbb{R}^{n+1}, d\mu)$  is a unitary isomorphism, where $$\mathcal{SM} L^{2}(\mathbb{R}^{n+1}, d\mu) = \mathcal{SM}(\mathbb{R}^{n+1}) \cap (L^{2}(\mathbb{R}^{n+1}, d\mu)\otimes \mathbb{C}_{n})$$ is the Hilbert space of slice monogenic   functions on $\mathbb{R}^{n+1}$ which are $L^{2}(\mathbb{R}^{n+1}, d\mu)\otimes \mathbb{C}_{n}$ with respect to
the measure
 $$d\mu(\bx) = \frac{2}{\sqrt{\pi}}\frac{1}{ |\mathbb{S}|}  \frac{e^{-|\underline{y}|^{2}}}{|\underline{y}|^{n-1}} dxd\underline{y}, \quad \bx=x+\underline{y},$$
 with $|\mathbb{S}|=2 {\pi^{\frac n2}}/{\Gamma\left(\frac n2\right)}$.
 \end{corollary}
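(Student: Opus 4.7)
The plan is to obtain Corollary \ref{main-Coherent-state-transforms-0n} as the direct specialization of Theorem \ref{main-Coherent-state-transforms} to the parameter choice $(p,q)=(0,n)$. The strategy is therefore to substitute $p=0$ and $q=n$ in every object appearing in Theorem \ref{main-Coherent-state-transforms} and to verify, using the remarks already established in the paper, that each such object coincides with the one used in the statement of the corollary. No new analytic estimates, no new integrability argument, and no new functional-analytic construction is needed.

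First I would carry out the symbolic substitutions. With $p=0$ one has $\mathbb R^{p+1}=\mathbb R$, so that $L^{2}(\mathbb R^{p+1},dx)\otimes\mathbb C_{p+q}=L^{2}(\mathbb R,dx)\otimes\mathbb C_{n}$; the ambient space becomes $\mathbb R^{p+q+1}=\mathbb R^{n+1}$, and a point $\bx=x+\underline y$ lies in $\mathbb R\oplus\mathbb R^{n}$. The constant $(2\pi)^{p+1}$ reduces to $2\pi$, so the defining formula for $U_{p,q}$ collapses exactly to the integral formula defining $U_{s}$ in the corollary. The operator $\exp(\Delta_{x}/2)$, which acts on a single real variable, and the space $\widetilde{\mathcal A}(\mathbb R^{p+1})\otimes\mathbb C_{p+q}$, become the image of $L^{2}(\mathbb R,dx)\otimes\mathbb C_{n}$ under $\exp(\Delta_{x}/2)$.

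Next I would identify the two ``target'' structures. By Remark \ref{Cauchy-Kovalevska-extension-special-cases}, with $(p,q)=(0,n)$ the CK-extension is precisely the standard slice monogenic extension $M_{s}$; by Remark \ref{rem32}, with the same parameters the class $\mathcal{GSM}(\mathbb R^{p+q+1})$ coincides with $\mathcal{SM}(\mathbb R^{n+1})$. Consequently $\mathcal{GSM}L^{2}(\mathbb R^{p+q+1},d\mu)$ becomes $\mathcal{SM}L^{2}(\mathbb R^{n+1},d\mu)$. The gaussian weight is read off directly: setting $q=n$ turns $|\underline y|^{q-1}$ into $|\underline y|^{n-1}$, and $|\mathbb S|=2\pi^{q/2}/\Gamma(q/2)$ into $|\mathbb S|=2\pi^{n/2}/\Gamma(n/2)$, so the measure $d\mu$ in the corollary is literally the measure of Theorem \ref{main-Coherent-state-transforms} at $q=n$. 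With every node and every arrow of the diagram matched, the commutativity identity $U_{s}=M_{s}\circ\exp(\Delta_{x}/2)$ and the fact that $U_{s}$ is a unitary isomorphism are immediate transcriptions of the corresponding statements for $U_{p,q}$.

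There is no real obstacle here; the only minor point to be careful about is bookkeeping, namely checking that each symbol in the statement of the corollary is obtained from the corresponding symbol of Theorem \ref{main-Coherent-state-transforms} under the same substitution, and that the two remarks \ref{Cauchy-Kovalevska-extension-special-cases} and \ref{rem32} are invoked to replace $CK$ by $M_{s}$ and $\mathcal{GSM}$ by $\mathcal{SM}$ respectively. Once that identification is recorded, the corollary is obtained without further work.
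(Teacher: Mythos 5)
Your proposal is correct and coincides with the paper's own treatment: the corollary is presented there as a direct consequence of Theorem \ref{main-Coherent-state-transforms}, obtained exactly by setting $(p,q)=(0,n)$ and invoking the identifications of $CK$ with $M_{s}$ and of $\mathcal{GSM}$ with $\mathcal{SM}$ from the relevant remarks. No further argument is needed.
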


In the special case $(p,q)=(n-1,1)$, we obtain a result equivalent to \cite[Theorem 3.1]{Qian-17}.
 \begin{corollary}\label{main-Coherent-state-transforms-0n}
 For $f\in L^2(\mathbb{R}^{n}, dx)\otimes \mathbb{C}_{n}$, define
  $$V [f] (\bx)=\frac{1}{ (2\pi)^{n}  }\int_{\mathbb{R}^{n}  } \Big( \int_{\mathbb{R}^{n} }e(\bx, \xi)  e^{-\frac{|\xi|^{2}}{2}} e^{-i\langle \zeta, \xi \rangle}d\xi \Big)  f(\zeta) d\zeta,$$
  where $\bx=x+\underline{y}\in \mathbb{R}^{n} \oplus \mathbb{R}.$

Then,  in the commutative diagram
$$\xymatrix{
L^{2}(\mathbb{R}^{n}, dx)\otimes \mathbb{C}_{n} \ar[dr]_{V} \ar[r]^{\exp\frac{\Delta_{x}}{2}}
                & \widetilde{\mathcal{A}}(\mathbb{R}^{n})\otimes \mathbb{C}_{n} \ar[d]^{CK}  \\
                & \mathcal{M} L^{2}(\mathbb{R}^{n+1}, d\mu),           }$$
the map $V:L^{2}(\mathbb{R}^{n}, dx)\otimes \mathbb{C}_{n}\rightarrow \mathcal{M} L^{2}(\mathbb{R}^{n+1}, d\mu)$  is a unitary isomorphism, where $$\mathcal{M} L^{2}(\mathbb{R}^{n+1}, d\mu) = \mathcal{M}(\mathbb{R}^{n+1}) \cap (L^{2}(\mathbb{R}^{n+1}, d\mu)\otimes \mathbb{C}_{n})$$ is the Hilbert space of  monogenic   functions on $\mathbb{R}^{n+1}$ which are $L^{2}(\mathbb{R}^{n+1}, d\mu)\otimes \mathbb{C}_{n}$ with respect to
the measure
 $$d\mu(\bx) = \frac{1}{\sqrt{\pi}}  e^{-|\underline{y}|^{2}} dxd\underline{y}, \quad \bx=x+\underline{y}.$$ \end{corollary}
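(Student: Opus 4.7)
The plan is to obtain this corollary as a direct specialization of Theorem \ref{main-Coherent-state-transforms} to the parameter choice $(p,q) = (n-1, 1)$. First, I would identify the geometric and algebraic reductions that occur in this case. By Remark \ref{rem32}, generalized partial-slice monogenic functions of type $(n-1, 1)$ on domains in $\mathbb{R}^{n+1}$ with values in $\mathbb{C}_n$ coincide with classical monogenic functions $\mathcal{M}(\mathbb{R}^{n+1})$, and by Remark \ref{Cauchy-Kovalevska-extension-special-cases} the operator $CK$ reduces to the standard (complexified) Cauchy-Kovalevskaya extension for monogenic functions. The paravector splitting $\bx = x + \underline{y}$ with $p = n-1$ and $q = 1$ gives $x \in \mathbb{R}^n$ and $\underline{y} \in \mathbb{R} e_n$, so the sphere $\mathbb{S}$ of unit $1$-vectors in $\mathbb{R}^q = \mathbb{R}$ consists only of the two points $\pm e_n$.

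Next, I would match the integration kernels and the diagrams. With $p+1 = n$ and $q = 1$ the definition of $V[f]$ in the corollary is, word-for-word, the definition of $U_{p,q}[f]$ in Definition \ref{definition-Segal-Bargmann-transform}; consequently $V = U_{n-1, 1}$ and the commutative square in the corollary is exactly the commutative square of Theorem \ref{main-Coherent-state-transforms} with the generic $CK$-arrow and Hilbert space $\mathcal{GSM}L^2(\mathbb{R}^{p+q+1}, d\mu)$ relabelled, via the identifications above, as $CK$ (monogenic) and $\mathcal{M}L^2(\mathbb{R}^{n+1}, d\mu)$ respectively.

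Finally, I would check that the measure appearing in the corollary is precisely the specialization of the measure in the main theorem. Substituting $q = 1$ into
$$d\mu(\bx) = \frac{2}{\sqrt{\pi}}\frac{1}{|\mathbb{S}|}\frac{e^{-|\underline{y}|^{2}}}{|\underline{y}|^{q-1}}\, dx\, d\underline{y}$$
gives $|\underline{y}|^{q-1} = |\underline{y}|^{0} = 1$ and $|\mathbb{S}| = 2\pi^{1/2}/\Gamma(1/2) = 2$, so the overall constant becomes $\frac{2}{\sqrt{\pi}} \cdot \frac{1}{2} = \frac{1}{\sqrt{\pi}}$ and we recover
$$d\mu(\bx) = \frac{1}{\sqrt{\pi}}\, e^{-|\underline{y}|^{2}}\, dx\, d\underline{y},$$
exactly as stated. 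Applying Theorem \ref{main-Coherent-state-transforms} to $(p,q) = (n-1, 1)$ then immediately yields the claimed unitary isomorphism $V \colon L^{2}(\mathbb{R}^{n}, dx) \otimes \mathbb{C}_{n} \to \mathcal{M}L^{2}(\mathbb{R}^{n+1}, d\mu)$ together with the commutativity of the diagram.

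There is essentially no obstacle here beyond bookkeeping: the only point requiring any verification is the collapse of the surface-measure normalization, which is a one-line $\Gamma$-function computation. Once the parameter identifications are made, every claim in the corollary is a direct corollary of the already established main theorem.
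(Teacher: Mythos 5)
Your proposal is correct and follows exactly the route the paper intends: the paper presents this corollary as a direct specialization of Theorem \ref{main-Coherent-state-transforms} to $(p,q)=(n-1,1)$ with no further argument, and your verification of the identifications (monogenic functions, the CK-extension, $V=U_{n-1,1}$) and of the measure constant via $|\mathbb{S}|=2\pi^{1/2}/\Gamma(1/2)=2$ is precisely the bookkeeping required.
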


 \section{Proof of Theorem \ref{main-Coherent-state-transforms}}
Following \cite{Qian-17}, to  prove our main Theorem \ref{main-Coherent-state-transforms} we need some lemmas.

Before to state and prove the next result, we recall the well-known Plancherel theorem.
 \begin{theorem}[Plancherel theorem]\label{Plancherel-theorem}
For $f,h\in \mathcal{S}(\mathbb{R}^{p+1})\otimes \mathbb{C}_{p+q}$, we have
$$\langle \widehat{f}, \widehat{h} \rangle_{L^{2}(\mathbb{R}^{p+1}, d\xi)\otimes \mathbb{C}_{p+q}}
  = \langle  f, h \rangle_{L^{2}(\mathbb{R}^{p+1}, dx)\otimes \mathbb{C}_{p+q}}.$$
\end{theorem}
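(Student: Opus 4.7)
The plan is to reduce the Clifford-algebra-valued Plancherel identity to the classical scalar one componentwise, using only the algebraic structure of the Hermitian inner product on $\mathbb{C}_{p+q}$ recalled in \eqref{sharp}. Writing $f=\sum_A f_A e_A$ and $h=\sum_A h_A e_A$ with $f_A,h_A\in\mathcal{S}(\mathbb{R}^{p+1})\otimes\mathbb{C}$, linearity of the scalar Fourier transform gives $\widehat{f}=\sum_A \widehat{f_A}\,e_A$ and similarly for $\widehat{h}$, so the whole question is whether the pointwise pairing $(\cdot,\cdot)$ on $\mathbb{C}_{p+q}$ diagonalizes the basis $\{e_A\}$.

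The first step is to verify the orthogonality relation $Sc(\overline{e_A}\,e_B)=\delta_{A,B}$. For $A=B=\{j_1<\cdots<j_k\}$ one computes $\overline{e_A}=(-1)^k e_{j_k}\cdots e_{j_1}$ and, using $e_{j}^{2}=-1$, one gets $\overline{e_A}e_A=1$. For $A\neq B$ the product $\overline{e_A}e_B$ is $\pm e_C$ for a nonempty $C$, hence has zero scalar part. Combining this with $(\lambda,\mu)=Sc(\lambda^\dagger\mu)$ and the fact that $e_A^\dagger=\overline{e_A}$ (since the Hermitian conjugate acts by complex conjugation on coefficients and by Clifford conjugation on basis vectors), one obtains the pointwise identity
\begin{equation*}
(f(x),h(x))=\sum_A f_A(x)^{c}\,h_A(x).
\end{equation*}

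Integrating this identity over $\mathbb{R}^{p+1}$ and using Fubini (each $f_A,h_A\in\mathcal{S}(\mathbb{R}^{p+1})$ so all sums are finite and the integrals converge absolutely) gives
\begin{equation*}
\langle f,h\rangle_{L^{2}(\mathbb{R}^{p+1},dx)\otimes\mathbb{C}_{p+q}}
=\sum_A \langle f_A,h_A\rangle_{L^{2}(\mathbb{R}^{p+1},dx)},
\end{equation*}
and similarly for the pairing of $\widehat{f}$ and $\widehat{h}$ using $\widehat{f_A}, \widehat{h_A}\in\mathcal{S}(\mathbb{R}^{p+1})$. The classical scalar Plancherel theorem applied to each complex-valued pair $(f_A,h_A)$ then yields $\langle \widehat{f_A},\widehat{h_A}\rangle=\langle f_A,h_A\rangle$, and summing over $A$ concludes the proof.

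There is essentially no obstacle here: the only nontrivial content is the orthogonality $Sc(\overline{e_A}e_B)=\delta_{A,B}$ that realizes the Hermitian inner product on $\mathbb{C}_{p+q}$ as the $\ell^{2}$-sum of coefficient products, after which the result is the scalar Plancherel theorem applied finitely many times. The hypothesis $f,h\in\mathcal{S}(\mathbb{R}^{p+1})\otimes\mathbb{C}_{p+q}$ guarantees both absolute integrability and the validity of the scalar Plancherel identity on each component.
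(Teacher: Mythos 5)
Your proof is correct. Note, however, that the paper does not actually prove this statement: it is introduced with ``we recall the well-known Plancherel theorem'' and used as a black box, so there is no argument in the paper to compare against. Your componentwise reduction is the standard and appropriate way to supply the missing details: the key observation that $Sc(\overline{e_A}e_B)=\delta_{A,B}$, hence $(\lambda,\mu)=\sum_A\lambda_A^c\mu_A$, is consistent with the norm formula $|\lambda|=\sqrt{\sum_A|\lambda_A|^2}$ given in Section~2, and since the Fourier kernel $e^{-i\langle\zeta,\xi\rangle}$ is a central complex scalar the transform acts componentwise, so the identity reduces to finitely many applications of the scalar Plancherel theorem for Schwartz functions (with the unitary normalization $(2\pi)^{-(p+1)/2}$ that the paper uses). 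No gaps.
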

\begin{lemma}\label{isometry-Schwarz}
For $f,h\in \mathcal{S}(\mathbb{R}^{p+1})\otimes \mathbb{C}_{p+q}$, we have the equality
$$\langle U_{p,q} [f], U_{p,q} [h]\rangle_{L^{2}(\mathbb{R}^{p+q+1}, d\mu)\otimes \mathbb{C}_{p+q}}=\langle  f, h \rangle_{L^{2}(\mathbb{R}^{p+1}, dx)\otimes \mathbb{C}_{p+q}}. $$
\end{lemma}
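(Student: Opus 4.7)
The plan is to compute both sides explicitly using the Fourier representation \eqref{FCK} from Lemma \ref{remark-Segal-Bargmann-transform}, and reduce the $d\mu$--inner product to the standard $L^{2}$--inner product of Fourier transforms, after which Plancherel's theorem (Theorem \ref{Plancherel-theorem}) closes the argument.

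First, writing $F=\widehat f$ and $H=\widehat h$, we have
\[
U_{p,q}[f](\bx)=\frac{1}{(2\pi)^{(p+1)/2}}\int_{\mathbb R^{p+1}}e^{-|\xi|^{2}/2}e(\bx,\xi)F(\xi)d\xi,
\]
and similarly for $h$. I would take the Hermitian conjugate of the first factor and use the factorization $e(\bx,\xi)=e^{i\langle x,\xi\rangle}e(\underline y,\xi)$ together with $e(\underline y,\xi)^{\dag}=e(\underline y,\xi)$ from \eqref{proposition-e-proof-1} to rewrite
\[
U_{p,q}[f](\bx)^{\dag}U_{p,q}[h](\bx)=\frac{1}{(2\pi)^{p+1}}\iint F(\xi)^{\dag}e^{i\langle x,\eta-\xi\rangle}e(\underline y,\xi)e(\underline y,\eta)H(\eta)e^{-(|\xi|^{2}+|\eta|^{2})/2}d\xi d\eta.
\]

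Next, I would integrate in $x\in\mathbb R^{p+1}$ first: this produces a Dirac mass $(2\pi)^{p+1}\delta(\eta-\xi)$, collapsing the double integral to a single one in $\xi$ and yielding the factor $e(\underline y,\xi)^{2}=e(2\underline y,\xi)$ by \eqref{proposition-e-proof-2}, together with $e^{-|\xi|^{2}}$. Then I would take the scalar part and switch to polar coordinates $\underline y=r\underline\omega$, $r\geq 0$, $\underline\omega\in\mathbb S$, so that $\frac{1}{|\underline y|^{q-1}}d\underline y=dr\, d\underline\omega$. Using
\[
e(2\underline y,\xi)=\cosh(2r|\xi|)+i\underline\omega\tfrac{\xi}{|\xi|}\sinh(2r|\xi|),
\]
the key observation is that the $\sinh$--term is linear in $\underline\omega\in\mathbb S$ and hence integrates to $0$ over the sphere, while the $\cosh$--term contributes $|\mathbb S|\cosh(2r|\xi|)\,Sc(F(\xi)^{\dag}H(\xi))$.

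The remaining $r$--integral is
\[
\int_{0}^{\infty}e^{-r^{2}}\cosh(2r|\xi|)dr=\tfrac{1}{2}\int_{-\infty}^{\infty}e^{-(r-|\xi|)^{2}+|\xi|^{2}}dr=\tfrac{\sqrt{\pi}}{2}e^{|\xi|^{2}},
\]
which together with the normalization $\tfrac{2}{\sqrt\pi}\tfrac{1}{|\mathbb S|}$ in $d\mu$ cancels precisely the factors $|\mathbb S|$, $\tfrac{\sqrt\pi}{2}$ and $e^{-|\xi|^{2}}$. What remains is $\int_{\mathbb R^{p+1}}Sc(F(\xi)^{\dag}H(\xi))d\xi=\langle\widehat f,\widehat h\rangle_{L^{2}(\mathbb R^{p+1},d\xi)\otimes\mathbb C_{p+q}}$, and Theorem \ref{Plancherel-theorem} gives $\langle f,h\rangle_{L^{2}(\mathbb R^{p+1},dx)\otimes\mathbb C_{p+q}}$, as required.

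The main technical obstacle is justifying the interchange of the $x$--integration with the two $\xi,\eta$ integrals producing the delta function, and verifying that the sphere integral of the $\sinh$ term truly vanishes for Clifford--valued integrands (not just for scalar ones). The first is handled by the Gaussian damping, since $f,h\in\mathcal S(\mathbb R^{p+1})\otimes\mathbb C_{p+q}$ guarantees rapid decay of $F$ and $H$ and allows Fubini; the second follows because the only $\underline\omega$--dependence in that term is linear in the components of $\underline\omega$, so $\int_{\mathbb S}\underline\omega\, d\underline\omega=0$ kills it regardless of the $\mathbb C_{p+q}$--valued coefficients on either side.
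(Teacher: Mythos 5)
Your proposal is correct and follows essentially the same route as the paper: Fourier representation of $U_{p,q}$ via \eqref{FCK}, the product identity $e(\bx,\xi)^{\dag}e(\bx,\zeta)=e^{i\langle x,\zeta-\xi\rangle}e(\underline y,\xi)e(\underline y,\zeta)$, collapse of the double frequency integral by the $x$-integration, the vanishing of the $\sinh$-term by odd symmetry in $\underline y$, the Gaussian integral $\int_{0}^{\infty}e^{-r^{2}}\cosh(2r|\xi|)\,dr=\tfrac{\sqrt{\pi}}{2}e^{|\xi|^{2}}$, and Plancherel. The only cosmetic difference is that you make the Dirac mass from the $x$-integration explicit, whereas the paper performs that collapse in a single displayed step.
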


 \begin{proof}
 First of all, we deduce that by Proposition \ref{proposition-e}
\begin{equation}\label{proposition-e-proof}
e(\bx, \xi) ^{\dag}e(\bx,  \zeta)= e(x,  \zeta-\xi) e(\underline{y}, \xi)e(\underline{y},\zeta).
\end{equation}
 Note  that below we shall use the inner product in \eqref{sharp}. For $f,h\in \mathcal{S}(\mathbb{R}^{p+1})\otimes \mathbb{C}_{p+q}$ we have that
\begin{eqnarray} \label{proposition-integral-0-proof}
&& \int_{\mathbb{R}^{p+q+1} } \frac{e^{-|\underline{y}|^{2}}}{|\underline{y}|^{q-1}} \Big(   \widehat{f}(\xi),
    i \frac{\underline{y}}{|\underline{y}|}\frac{\xi}{|\xi|} \sinh(2|\underline{y}| |\xi|)    \widehat{h}(\xi) \Big) e^{- |\xi|^{2} } d\xi   d\underline{y} \notag
    \\
    &=& \int_{\mathbb{R}^{p+1} }  \Big(   \widehat{f}(\xi),
    i  \Big(\int_{\mathbb{R}^{q} } \frac{e^{-|\underline{y}|^{2}}}{|\underline{y}|^{q-1}}   \frac{\sinh(2|\underline{y}| |\xi|)}{|\underline{y}||\xi|}    \underline{y}   d\underline{y}\Big)  \xi  \widehat{h}(\xi) \Big) e^{- |\xi|^{2} } d\xi \notag
   \\
    &=&0, \end{eqnarray}
and    \begin{equation}\label{proposition-integral-proof}
\int_{\mathbb{R}^{q} } \frac{e^{-|\underline{y}|^{2}}}{|\underline{y}|^{q-1}}\cosh(2|\underline{y}| |\xi|)d\underline{y}
   =|\mathbb{S}| \int_{0}^{+\infty}  e^{-r^{2}}  \cosh(2r |\xi|)dr=\frac{\sqrt{\pi}}{2}|\mathbb{S}|e^{|\xi|^{2}}. \end{equation}
Hence, we have
\begin{eqnarray*}
 & &\langle U_{p,q} [f], U_{p,q} [h]\rangle_{L^{2}(\mathbb{R}^{p+q+1}, d\mu)\otimes \mathbb{C}_{p+q}}
 \\
&=& \frac{2}{\sqrt{\pi}}\frac{1}{ |\mathbb{S}|} \int_{\mathbb{R}^{p+q+1} }   \big(U_{p,q} [f](x+\underline{y}), U_{p,q} [h](x+\underline{y})\big)  \frac{e^{-|\underline{y}|^{2}}}{|\underline{y}|^{q-1}} dxd\underline{y}
\\
 &\xlongequal{\text{Lemma \ref{remark-Segal-Bargmann-transform}}}&\frac{2}{\sqrt{\pi}}\frac{1}{ |\mathbb{S}|}\frac{ 1}{ (2\pi)^{ p+1 } } \int_{\mathbb{R}^{p+q+1} } \frac{e^{-|\underline{y}|^{2}}}{|\underline{y}|^{q-1}} dxd\underline{y} \int_{\mathbb{R}^{2(p+1)} } \big( e(x+\underline{y}, \xi) e^{-\frac{|\xi|^{2}}{2}}   \widehat{f}(\xi),
 \\
 && e(x+\underline{y},  \zeta) e^{-\frac{|\zeta|^{2}}{2}}   \widehat{h}(\zeta) \big)d\xi d\zeta
 \\
  &=&\frac{2}{\sqrt{\pi}}\frac{1}{ |\mathbb{S}|}\frac{1}{ (2\pi)^{ p+1 } } \int_{\mathbb{R}^{p+q+1} } \frac{e^{-|\underline{y}|^{2}}}{|\underline{y}|^{q-1}} dxd\underline{y} \int_{\mathbb{R}^{2(p+1)} } \big(   \widehat{f}(\xi),
 \\
 && e(x+\underline{y}, \xi)^{\dag} e(x+\underline{y},  \zeta)  \widehat{h}(\zeta) \big) e^{-\frac{|\xi|^{2}+|\zeta|^{2}}{2}} d\xi d\zeta
\\
  &\xlongequal{\text{ (\ref{proposition-e-proof})}} &\frac{2}{\sqrt{\pi}}\frac{1}{ |\mathbb{S}|}\frac{ 1}{ (2\pi)^{ p+1 } } \int_{\mathbb{R}^{p+q+1} } \frac{e^{-|\underline{y}|^{2}}}{|\underline{y}|^{q-1}} dxd\underline{y} \int_{\mathbb{R}^{2(p+1)} } \big(   \widehat{f}(\xi),
 \\
 && e(x, \zeta-\xi) e(\underline{y}, \xi)e(\underline{y}, \zeta)   \widehat{h}(\zeta) \big) e^{-\frac{|\xi|^{2}+|\zeta|^{2}}{2}} d\xi d\zeta
 \\
   &=&\frac{2}{\sqrt{\pi}}\frac{1}{ |\mathbb{S}|}\frac{ 1}{ (2\pi)^{ p+1 } } \int_{\mathbb{R}^{p+q+1} \times \mathbb{R}^{2(p+1)}} \frac{e^{-|\underline{y}|^{2}}}{|\underline{y}|^{q-1}} \big(   \widehat{f}(\xi),
   \\
   &&e(x, \zeta-\xi) e(\underline{y}, \xi)e(\underline{y}, \zeta)   \widehat{h}(\zeta) \big) e^{-\frac{|\xi|^{2}+|\zeta|^{2}}{2}} d\xi d\zeta dxd\underline{y}
   \\
   &=& \frac{2}{\sqrt{\pi}}\frac{1}{ |\mathbb{S}|} \int_{\mathbb{R}^{p+q+1} } \frac{e^{-|\underline{y}|^{2}}}{|\underline{y}|^{q-1}} \big(   \widehat{f}(\xi), e(\underline{y}, \xi)^{2}   \widehat{h}(\xi) \big) e^{- |\xi|^{2} } d\xi   d\underline{y}
   \\
   &\xlongequal{\text{(\ref{proposition-e-4})}} & \frac{2}{\sqrt{\pi}}\frac{1}{ |\mathbb{S}|} \int_{\mathbb{R}^{p+q+1} } \frac{e^{-|\underline{y}|^{2}}}{|\underline{y}|^{q-1}} \big(   \widehat{f}(\xi),
  e(2\underline{y}, \xi)    \widehat{h}(\xi) \big) e^{- |\xi|^{2} } d\xi   d\underline{y}
  \\
   &\xlongequal{\text{Proposition \ref{example-e}}}& \frac{2}{\sqrt{\pi}}\frac{1}{ |\mathbb{S}|} \int_{\mathbb{R}^{p+q+1} } \frac{e^{-|\underline{y}|^{2}}}{|\underline{y}|^{q-1}} \big(   \widehat{f}(\xi),
   \cosh(2|\underline{y}| |\xi|)     \widehat{h}(\xi) \big) e^{- |\xi|^{2} } d\xi   d\underline{y}
   \\
   & & +\frac{2}{\sqrt{\pi}}\frac{1}{ |\mathbb{S}|} \int_{\mathbb{R}^{p+q+1} } \frac{e^{-|\underline{y}|^{2}}}{|\underline{y}|^{q-1}} \Big(   \widehat{f}(\xi),
    i \frac{\underline{y}}{|\underline{y}|}\frac{\xi}{|\xi|} \sinh(2|\underline{y}| |\xi|)    \widehat{h}(\xi) \Big) e^{- |\xi|^{2} } d\xi   d\underline{y}
    \\
   &\xlongequal{\text{(\ref{proposition-integral-0-proof})}}& \frac{2}{\sqrt{\pi}}\frac{1}{ |\mathbb{S}|} \int_{\mathbb{R}^{p+1} } \big(   \widehat{f}(\xi),
   \widehat{h}(\xi) \big) e^{- |\xi|^{2} } d\xi   \int_{\mathbb{R}^{q} } \frac{e^{-|\underline{y}|^{2}}}{|\underline{y}|^{q-1}}\cosh(2|\underline{y}| |\xi|)d\underline{y}
   \\
   &\xlongequal{\text{(\ref{proposition-integral-proof})}}& \int_{\mathbb{R}^{p+1} } \big(   \widehat{f}(\xi),
   \widehat{h}(\xi) \big) d\xi
   \\
   &=& \langle \widehat{f}, \widehat{h} \rangle_{L^{2}(\mathbb{R}^{p+1}, d\xi)\otimes \mathbb{C}_{p+q}}
   \\
   & \xlongequal{\text{Theorem \ref{Plancherel-theorem}}}& \langle  f, h \rangle_{L^{2}(\mathbb{R}^{p+1}, dx)\otimes \mathbb{C}_{p+q}},
    \end{eqnarray*}
  as desired.
 \end{proof}

In the one variable case, for  $k\in \mathbb{N}_{0}$, $k$-th   Hermite polynomials $H_k(x)$ are defined by
$$H_k(x)= (-1)^{k}e ^{x^{2}}  \frac{d}{dx^{k}}e ^{-x^{2}}.$$
In alternative, they are also given by the formula
$$e^{2xy-y^{2}}=\sum_{k=0}^{\infty} H_k(x) \frac{y^{k}}{k!}.$$
It is well-known that
$$\int_{\mathbb{R}} H_k(x)H_l(x) e^{-x^{2}}dx = \sqrt{\pi}2^{k}k! \delta_{k,l},$$
where $\delta_{k,l}$ is the Kronecker delta function.

In the several  variables case, let now $\{H_{\mathrm{k}}, \mathrm{k} \in \mathbb{N}_{0}^{p+1}\}$ denote the orthogonal basis of $L^2(\mathbb{R}^{p+1},e^{-|x|^{2}}dx)$ consisting of the Hermite polynomials on $\mathbb{R}^{p+1}$ defined as
$$H_{\mathrm{k}}(x)=H_{k_{0}}(x_{0})H_{k_{1}}(x_{1})\cdots H_{k_{p}}(x_{p}),$$
where $\mathrm{k}=(k_0,k_1,\ldots,k_p).$

\begin{lemma}\label{factor-onto-orth}
Define, for $\mathrm{k} \in \mathbb{N}_{0}^{p+1}$,
$$\varphi_{\mathrm{k}}(x)=H_{\mathrm{k}}(x)e^{-\frac{|x|^{2}}{2}}, \quad x\in \mathbb{R}^{p+1}.$$
Then $\{\varphi_{\mathrm{k}}, \mathrm{k} \in \mathbb{N}_{0}^{p+1}\} \subset \mathcal{S}(\mathbb{R}^{p+1})$  is an orthogonal basis for $L^2(\mathbb{R}^{p+1},dx)$.
\end{lemma}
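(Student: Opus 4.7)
The plan is to exploit the classical fact that the Hermite polynomials $\{H_k : k \in \mathbb{N}_0\}$ form an orthogonal basis of $L^2(\mathbb{R}, e^{-x^2}dx)$ and transfer this to the statement at hand via the isometric multiplication map. Concretely, I would introduce
$$T : L^{2}(\mathbb{R}^{p+1}, e^{-|x|^{2}}dx) \longrightarrow L^{2}(\mathbb{R}^{p+1}, dx), \qquad Tf(x) = f(x)\, e^{-|x|^{2}/2},$$
and observe that $T$ is a linear isometric isomorphism, since for any $f$ we have $\int_{\mathbb{R}^{p+1}} |Tf(x)|^{2}\, dx = \int_{\mathbb{R}^{p+1}} |f(x)|^{2} e^{-|x|^{2}}\, dx$, and $T$ is clearly onto (its inverse is multiplication by $e^{|x|^{2}/2}$). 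Under this identification, $\varphi_{\mathrm{k}} = T(H_{\mathrm{k}})$, so the lemma reduces to showing that $\{H_{\mathrm{k}} : \mathrm{k} \in \mathbb{N}_{0}^{p+1}\}$ is an orthogonal basis of $L^{2}(\mathbb{R}^{p+1}, e^{-|x|^{2}}dx)$.

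That $\varphi_{\mathrm{k}} \in \mathcal{S}(\mathbb{R}^{p+1})$ is immediate from the definition, being a polynomial times a Gaussian. For orthogonality, the tensor-product structure $H_{\mathrm{k}}(x) = \prod_{j=0}^{p} H_{k_{j}}(x_{j})$ together with $e^{-|x|^{2}} = \prod_{j=0}^{p} e^{-x_{j}^{2}}$ and Fubini gives
$$\int_{\mathbb{R}^{p+1}} H_{\mathrm{k}}(x) H_{\mathrm{l}}(x) e^{-|x|^{2}}\, dx = \prod_{j=0}^{p} \int_{\mathbb{R}} H_{k_{j}}(x_{j}) H_{l_{j}}(x_{j}) e^{-x_{j}^{2}}\, dx_{j} = \prod_{j=0}^{p} \sqrt{\pi}\, 2^{k_{j}} k_{j}!\, \delta_{k_{j}, l_{j}},$$
using the one-variable orthogonality recalled just before the statement. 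This vanishes unless $\mathrm{k} = \mathrm{l}$, so the system is orthogonal and nonzero.

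For completeness I would invoke the standard tensor-product principle: if $\{H_k\}_{k}$ is an orthogonal basis of $L^{2}(\mathbb{R}, e^{-x_{j}^{2}}dx_{j})$ for each coordinate, then the products $\{H_{\mathrm{k}}\}_{\mathrm{k} \in \mathbb{N}_{0}^{p+1}}$ form an orthogonal basis of the tensor product $\bigotimes_{j=0}^{p} L^{2}(\mathbb{R}, e^{-x_{j}^{2}}dx_{j}) \cong L^{2}(\mathbb{R}^{p+1}, e^{-|x|^{2}}dx)$. Applying the isometry $T$ then transports this to an orthogonal basis of $L^{2}(\mathbb{R}^{p+1}, dx)$, namely $\{\varphi_{\mathrm{k}}\}$, as required. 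I do not foresee any real obstacle here; the only mildly delicate point is the appeal to the tensor-product completeness, which is standard but deserves a one-line justification (e.g., via density of polynomials in $L^{2}(\mathbb{R}, e^{-x^{2}}dx)$, which follows from the fact that the moment problem for the Gaussian measure is determinate, or directly from the density of $\mathcal{S}(\mathbb{R}^{p+1})$ in $L^{2}(\mathbb{R}^{p+1}, dx)$ combined with Hermite expansion of Schwartz functions).
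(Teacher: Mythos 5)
Your proposal is correct and follows essentially the same route as the paper: the paper's proof consists precisely of the Fubini/tensor-product computation $\int_{\mathbb{R}^{p+1}} H_{\mathrm{k}}H_{\mathrm{l}}e^{-|x|^{2}}dx = \pi^{(p+1)/2}2^{|\mathrm{k}|}\mathrm{k}!\,\delta_{\mathrm{k},\mathrm{l}}$ reduced to the one-variable orthogonality. In fact you go slightly further than the paper, which leaves the completeness of the system in $L^{2}(\mathbb{R}^{p+1},dx)$ entirely implicit, whereas you supply the standard tensor-product/density justification.
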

\begin{proof}
By their definition and by the orthogonality of Hermite polynomials in one variable,
  we deduce that, for $\mathrm{k}=(k_0,k_1,\ldots,k_p)$ and $ \mathrm{l}=(l_0,l_1,\ldots,l_p)$,
$$\int_{\mathbb{R}^{p+1}} H_\mathrm{k}(x)H_\mathrm{l}(x) e^{-|x|^{2}}dx = \pi^{(p+1)/2}2^{|\mathrm{k}|} \mathrm{k}! \delta_{\mathrm{k},\mathrm{l}},$$
where $|\mathrm{k}|=k_0+k_1+\ldots+k_p, \mathrm{k}!=k_0!k_1!\ldots k_p!,$ and $\delta_{\mathrm{k},\mathrm{l}}=\delta_{k_0,l_0}\delta_{k_1,l_1}\ldots\delta_{k_p,l_p}$.

\end{proof}

\begin{lemma}\label{factor-onto}
For $\mathrm{k} \in \mathbb{N}_{0}^{p+1}$, we have
$$ \exp(\frac{\Delta_{x}}{2})\varphi_{\mathrm{k}}(x)=2 ^{-\frac{p+1}{2}} x^{\mathrm{k}} e^{-\frac{|x|^{2}}{4}}, \quad x\in \mathbb{R}^{p+1}.$$
\end{lemma}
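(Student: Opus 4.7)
The plan is to reduce to the one-dimensional case and then to exploit the generating function of the Hermite polynomials together with the explicit action of the heat semigroup on Gaussians.

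First I would observe that the Laplacian $\Delta_x = \sum_{j=0}^{p} \partial_{x_j}^2$ is a sum of commuting one-variable Laplacians, so that $\exp(\Delta_x/2) = \prod_{j=0}^{p} \exp(\partial_{x_j}^2/2)$, and this product operator factors over the variables. Since $\varphi_{\mathrm{k}}(x) = \prod_{j=0}^{p} H_{k_j}(x_j) e^{-x_j^2/2}$ is itself a tensor product, the claim reduces to proving the one-variable identity
\begin{equation*}
\exp\!\left(\tfrac{1}{2}\partial_{x}^{2}\right)\!\left[H_{k}(x)e^{-x^{2}/2}\right] = 2^{-1/2}\,x^{k}\,e^{-x^{2}/4}, \qquad x\in\mathbb{R},\ k\in\mathbb{N}_0.
\end{equation*}

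Next, I would use the standard Hermite generating function $\sum_{k\geq 0} H_{k}(x) y^{k}/k! = e^{2xy-y^{2}}$. Multiplying both sides by $e^{-x^{2}/2}$ and completing the square gives
\begin{equation*}
\sum_{k=0}^{+\infty}\varphi_{k}(x)\frac{y^{k}}{k!} \;=\; e^{2xy-y^{2}-x^{2}/2} \;=\; e^{y^{2}}\, e^{-(x-2y)^{2}/2}.
\end{equation*}
Now I would apply $\exp(\partial_{x}^{2}/2)$ to both sides (treating $y$ as a parameter). Writing the heat semigroup as convolution with $p_{1}(x)=(2\pi)^{-1/2}e^{-x^{2}/2}$ and convolving two Gaussians, one finds
\begin{equation*}
\exp\!\left(\tfrac{1}{2}\partial_{x}^{2}\right)\!\left[e^{-(x-2y)^{2}/2}\right] \;=\; 2^{-1/2}\,e^{-(x-2y)^{2}/4}.
\end{equation*}
Substituting back and simplifying the exponent, $y^{2} - (x-2y)^{2}/4 = xy - x^{2}/4$, yields
\begin{equation*}
\sum_{k=0}^{+\infty}\exp\!\left(\tfrac{1}{2}\partial_{x}^{2}\right)\!\left[\varphi_{k}(x)\right]\frac{y^{k}}{k!} \;=\; 2^{-1/2}\,e^{-x^{2}/4}\,e^{xy} \;=\; 2^{-1/2}\,e^{-x^{2}/4}\sum_{k=0}^{+\infty}\frac{x^{k}\,y^{k}}{k!}.
\end{equation*}
Matching the coefficients of $y^{k}$ gives the one-variable identity, and taking the product over $j=0,\ldots,p$ produces the factor $2^{-(p+1)/2}$ and the monomial $x^{\mathrm{k}}$, proving the claim.

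The only genuinely delicate point is justifying the interchange of $\exp(\partial_{x}^{2}/2)$ with the infinite sum in $y$, and the application of the heat operator to the Gaussian $e^{-(x-2y)^{2}/2}$ (both being operations on Schwartz-class functions that are analytic in the parameter $y$). This is standard: the series on the left-hand side converges in $\mathcal{S}(\mathbb{R})$ uniformly in $y$ on compacta, and the heat semigroup is continuous on $\mathcal{S}(\mathbb{R})$, so the interchange is legitimate. Alternatively, one can bypass generating functions entirely by verifying the claimed identity for each fixed $k$ through the formula $\exp(\partial_x^2/2) f = f * p_1$ and the classical Hermite--Gauss integral $\int_{\mathbb R} H_k(t)e^{-t^2/2}e^{-(x-t)^2/2}dt = \sqrt{\pi}\,x^{k}e^{-x^{2}/4}$, but the generating function route is the cleanest.
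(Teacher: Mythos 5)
Your proof is correct, and it reaches the result by a route that is close in spirit to the paper's but mechanically different. The paper also realizes $\exp(\Delta_x/2)$ as convolution with the Gaussian heat kernel and completes the square, but it then recognizes the Hermite generating function \emph{inside} the integral, expanding $e^{(x,\xi)-|x|^2/4}=\sum_{\mathrm{l}}H_{\mathrm{l}}(\xi)\,(x/2)^{\mathrm{l}}/\mathrm{l}!$ in the integration variable $\xi$ with parameter $x/2$, and kills every term except $\mathrm{l}=\mathrm{k}$ via the orthogonality relation of Lemma \ref{factor-onto-orth}; it works directly in $p+1$ variables and treats each fixed $\mathrm{k}$ separately, so no interchange of an infinite sum with the operator is needed. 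You instead tensorize down to one variable, sum the generating function over $k$ \emph{outside} the operator, reduce the action of the heat semigroup to a single Gaussian--Gaussian convolution, and match coefficients of $y^k$; orthogonality is never used. Your version is arguably more self-contained (it does not lean on Lemma \ref{factor-onto-orth}), at the price of the interchange-of-limits justification, which you correctly flag and which is indeed routine on $\mathcal{S}(\mathbb{R})$. Your closing alternative, verifying $\int_{\mathbb{R}}H_k(t)e^{-t^2/2}e^{-(x-t)^2/2}\,dt=\sqrt{\pi}\,x^{k}e^{-x^{2}/4}$ for each fixed $k$, is essentially what the paper's computation amounts to once its generating-function-plus-orthogonality step is unwound.
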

 \begin{proof}
 By definition, we have
 \begin{eqnarray*}
 & & e^{\frac{|x|^{2}}{4}} \exp\frac{\Delta_{x}}{2}\varphi_{\mathrm{k}}(x)  \\
&=& \frac{1}{(2\pi )^{(p+1)/2}}  e^{\frac{|x|^{2}}{4}} \int_{\mathbb{R}^{p+1} } e^{-\frac{|x-\xi|^{2}}{2}} H_{\mathrm{k}}(\xi)e^{-\frac{|\xi |^{2}}{2}}d\xi
\\
&=& \frac{1}{(2\pi )^{(p+1)/2}} \int_{\mathbb{R}^{p+1} } e^{ (x,\xi)-\frac{|x|^{2}}{4}} H_{\mathrm{k}}(\xi)e^{-|\xi |^2}d\xi
\\
&=& \frac{1}{(2\pi )^{(p+1)/2}} \int_{\mathbb{R}^{p+1} }  \sum_{\mathrm{l}\in \mathbb{N}_{0}^{p+1}}  H_\mathrm{l}( \xi ) \frac{(x/2)^{\mathrm{k}}}{\mathrm{k}!} H_{\mathrm{k}}(\xi)e^{-|\xi |^2}d\xi
\\
&=& 2^{-\frac{ p+1} {2}} x^{\mathrm{k}},
\end{eqnarray*}
as desired.
 \end{proof}

 \begin{lemma} For $\mathrm{k} \in \mathbb{N}_{0}^{p+1}$, define
$$\psi_{\mathrm{k}}(\bx):=2^{(p+1)/2} U_{p,q}[\varphi_{\mathrm{k}}](\bx)\equiv CK[x^{\mathrm{k}} e^{-\frac{|x|^{2}}{4}}](\bx), \quad \bx\in \mathbb{R}^{p+q+1}.$$
Then, the set
$$\big\{ \psi_{\mathrm{k}} e_{A}: \mathrm{k} \in \mathbb{N}_{0}^{p+1}, A\subseteq \{1,2,\ldots, p+q\} \big\}$$
is an orthogonal basis for the Hilbert space $\mathcal{GSM}L^{2}(\mathbb{R}^{p+q+1}, d\mu)$.
\end{lemma}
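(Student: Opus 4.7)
The plan is to derive both orthogonality and totality from Lemma \ref{isometry-Schwarz} together with the completeness of the Hermite basis $\{\varphi_{\mathrm{k}}\}$ of $L^{2}(\mathbb{R}^{p+1},dx)$ provided by Lemma \ref{factor-onto-orth}. First I would confirm the two equivalent forms of $\psi_{\mathrm{k}}$: since $\varphi_{\mathrm{k}} \in \mathcal{S}(\mathbb{R}^{p+1}) \otimes \mathbb{C}_{p+q}$, Lemma \ref{remark-Segal-Bargmann-transform} gives $U_{p,q}[\varphi_{\mathrm{k}}] = CK \circ \exp(\Delta_{x}/2)[\varphi_{\mathrm{k}}]$, while Lemma \ref{factor-onto} evaluates $\exp(\Delta_{x}/2)\varphi_{\mathrm{k}}(x) = 2^{-(p+1)/2} x^{\mathrm{k}} e^{-|x|^{2}/4}$; composing these produces $2^{(p+1)/2} U_{p,q}[\varphi_{\mathrm{k}}] = CK[x^{\mathrm{k}} e^{-|x|^{2}/4}]$, as claimed.

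For orthogonality, the integral definition of $U_{p,q}$ shows it is right $\mathbb{C}_{p+q}$-linear, hence $\psi_{\mathrm{k}} e_{A} = 2^{(p+1)/2} U_{p,q}[\varphi_{\mathrm{k}} e_{A}]$. Applying Lemma \ref{isometry-Schwarz} to the Schwartz inputs $\varphi_{\mathrm{k}} e_{A}$ and $\varphi_{\mathrm{l}} e_{B}$ converts the $d\mu$-pairing on $\mathcal{GSM} L^{2}(\mathbb{R}^{p+q+1}, d\mu)$ into the Euclidean pairing on $L^{2}(\mathbb{R}^{p+1}, dx) \otimes \mathbb{C}_{p+q}$. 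Because $\varphi_{\mathrm{k}}$ is real-valued, the latter factors as $\langle \varphi_{\mathrm{k}}, \varphi_{\mathrm{l}}\rangle_{L^{2}(dx)} \cdot (e_{A}, e_{B})$, and by Lemma \ref{factor-onto-orth} together with the identity $Sc(\overline{e_A} e_B) = \pm\delta_{A,B}$, this expression vanishes unless $\mathrm{k}=\mathrm{l}$ and $A=B$.

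For totality, suppose $F \in \mathcal{GSM} L^{2}(\mathbb{R}^{p+q+1}, d\mu)$ satisfies $\langle F, \psi_{\mathrm{k}} e_{A}\rangle_{d\mu} = 0$ for all $\mathrm{k}$ and $A$; the goal is $F \equiv 0$. I would carry out the Gaussian and spherical integrations of this hypothesis directly, mimicking the slice-integration computation used in Lemma \ref{isometry-Schwarz} and expanding $\psi_{\mathrm{k}}^{\dag}$ via Proposition \ref{proposition-e} as a product of an Euclidean exponential in $x$ and a radial function in $\underline{y}$; this reduces the condition to an orthogonality statement of a function on $\mathbb{R}^{p+1}$ derived from $F$ against the basis $\{\varphi_{\mathrm{k}} e_{A}\}$. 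Since that basis is total in $L^{2}(\mathbb{R}^{p+1}) \otimes \mathbb{C}_{p+q}$, the derived function vanishes, and Theorem \ref{Identity-theorem} then yields $F \equiv 0$.

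The main obstacle is to justify this last integration step: Lemma \ref{isometry-Schwarz} is proved only for Schwartz inputs, while $F$ is merely in $L^{2}(d\mu)$. Making the argument rigorous therefore requires either an approximation by Schwartz-class CK-extensions, whose $d\mu$-limits reproduce $F$, or the explicit construction of the reproducing kernel of $\mathcal{GSM} L^{2}(\mathbb{R}^{p+q+1}, d\mu)$ combined with the verification that its Fourier expansion in the system $\{\psi_{\mathrm{k}} e_A\}$ recovers the kernel itself. In either route, one must handle carefully the noncommutativity of the Clifford product and the anisotropic weight $e^{-|\underline{y}|^{2}}/|\underline{y}|^{q-1}$ appearing in $d\mu$.
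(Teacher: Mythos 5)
Your verification of the two equivalent forms of $\psi_{\mathrm{k}}$ and your orthogonality argument are correct and are essentially what the paper intends: right $\mathbb{C}_{p+q}$-linearity of $U_{p,q}$ plus Lemma \ref{isometry-Schwarz} reduces the $d\mu$-pairing of $\psi_{\mathrm{k}}e_{A}$ and $\psi_{\mathrm{l}}e_{B}$ to $2^{p+1}\langle\varphi_{\mathrm{k}},\varphi_{\mathrm{l}}\rangle_{L^{2}(dx)}\,Sc(\overline{e_{A}}e_{B})$, and Lemma \ref{factor-onto-orth} finishes (note that $Sc(\overline{e_{A}}e_{B})=\delta_{A,B}$ exactly, with no sign, since $\overline{e_{A}}e_{A}=1$ for every blade).

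The genuine gap is in the totality half. You propose to show that an $F\in\mathcal{GSM}L^{2}(\mathbb{R}^{p+q+1},d\mu)$ orthogonal to every $\psi_{\mathrm{k}}e_{A}$ must vanish, by rerunning the slice-integration computation of Lemma \ref{isometry-Schwarz} with $F$ in one slot --- and you yourself point out that this computation is only justified for Schwartz-class inputs, offering an approximation argument or a reproducing kernel as possible repairs without carrying either one out. As written, this half is a plan with an acknowledged missing step, not a proof. The paper closes totality by a different route, which in fact appears inside the proof of Theorem \ref{main-Coherent-state-transforms} rather than in the proof of this lemma: given $F\in\mathcal{GSM}L^{2}$, the Identity Theorem (Theorem \ref{Identity-theorem}) says $F$ is determined by its restriction $F_{0}$ to $\underline{y}=0$; since $F_{0}(x)e^{|x|^{2}/4}$ has an everywhere-convergent multiple power series, one writes $F_{0}(x)=\sum_{\mathrm{k}}x^{\mathrm{k}}e^{-|x|^{2}/4}a_{\mathrm{k}}$ and, by Corollary \ref{slice-Cauchy-Kovalevsky-extension-entire-Identity} and termwise CK-extension, $F=\sum_{\mathrm{k}}\psi_{\mathrm{k}}a_{\mathrm{k}}$; the isometry of Lemma \ref{isometry-Schwarz} (applied to the finite partial sums, which \emph{are} Schwartz) then identifies the condition $F\in L^{2}(d\mu)$ with $\sum_{\mathrm{k}}\varphi_{\mathrm{k}}a_{\mathrm{k}}\in L^{2}(dx)$, so $F$ lies in the closed span of the system. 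This expansion argument never needs to pair a general $L^{2}(d\mu)$ element against the $\psi_{\mathrm{k}}e_{A}$, which is precisely the step you could not justify; to salvage your orthogonal-complement strategy you would have to actually supply one of the two missing ingredients you named.
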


 \begin{proof}
 Since $x^{\mathrm{k}} e^{-\frac{|x|^{2}}{4}}\in \mathcal{S}(\mathbb{R}^{p+1})$ for all $\mathrm{k} \in \mathbb{N}_{0}^{p+1}$, $\psi_{\mathrm{k}}(\bx)$ are well-defined on $\mathbb{R}^{p+q+1}$  from Proposition \ref{Ck-entire-condition-Schwarz} and  Lemma \ref{factor-onto}.

 The   conclusion that $\big\{ \psi_{\mathrm{k}} e_{A} \big\}$ is an orthogonal basis for   $\mathcal{GSM}L^{2}(\mathbb{R}^{p+q+1}, d\mu)$ follows directly from Lemmas \ref{isometry-Schwarz} and \ref{factor-onto-orth} and this completes the proof.

 \end{proof}

 Now we are in a position to prove Theorem \ref{main-Coherent-state-transforms}.
  \begin{proof}[Proof of Theorem \ref{main-Coherent-state-transforms}] By the density of  $ \mathcal{S}(\mathbb{R}^{p+1})$ in $L^{2}(\mathbb{R}^{p+1}, dx)$ we deduce  by
Lemma  \ref{isometry-Schwarz}  that $U_{p,q}$  is an isometry onto its image which is, therefore, closed in $L^{2}(\mathbb{R}^{p+q+1}, d\mu)\otimes \mathbb{C}_{p+q}$.  In view of  Lemma  \ref{partial-slice-monogenic}, we get $ U_{p,q}(L^{2}(\mathbb{R}^{p+1}, dx)\otimes \mathbb{C}_{p+q})\subseteq  \mathcal{GSM} L^{2}(\mathbb{R}^{p+q+1})$. Hence, $ U_{p,q}(L^{2}(\mathbb{R}^{p+1}, dx)\otimes \mathbb{C}_{p+q})$ is a Hilbert space of  generalized partial-slice
monogenic functions.

 Now let us  prove that the map  $U_{p,q}$  is onto. Note that, by Theorem \ref{Identity-theorem}, the restriction of $f\in \mathcal{GSM}(\mathbb{R}^{p+q+1})$  to the hyperplane $\underline{y}=0$, $f_{0}(x) (x\in\mathbb{R}^{p+1})$ determines $f(\bx) (\bx\in\mathbb{R}^{p+q+1})$ uniquely. Since  both generalized  partial-slice monogenic functions (see Theorem  \ref{Taylor-theorem}) and the factor $e^{\frac{|x|^{2}}{4}}$  have a Taylor series at origin with infinite radius of convergence, it follows that $f_0(x)$ can be expressed uniquely in the form
 $$f_0(x)e^{\frac{|x|^{2}}{4}} =\sum_{A\subseteq \{1,2,\ldots, p+q\}}\sum_{\mathrm{k} \in \mathbb{N}_{0}^{p+1}} x^{\mathrm{k}} a_{\mathrm{k},A}e_{A}, \quad x\in\mathbb{ R}^{p+1},a_{\mathrm{k},A}\in \mathbb{C},$$
 or
$$f_0(x)=\sum_{\mathrm{k}\in \mathbb{N}_{0}^{p+1}} x^{\mathrm{k}} e^{-\frac{|x|^{2}}{4}} a_{\mathrm{k}}, \quad x\in\mathbb{ R}^{p+1},$$
where $$ a_{\mathrm{k}}=\sum_{A\subseteq \{1,2,\ldots, p+q\}}  a_{\mathrm{k},A}e_{A}\in \mathbb{C}_{p+q}.$$
Hence, by Corollary \ref{slice-Cauchy-Kovalevsky-extension-entire-Identity}, we have
     $$f(\bx)=CK[f_{0}](\bx)=\sum_{m=0}^{+\infty}\sum_{  \mathrm{k}\in \mathbb{N}_{0}^{p+1}} \frac{1}{m!}( \underline{y}D_{x})^{m} (x^{ \mathrm{k}} e^{-\frac{|x|^{2}}{4}}) a_{ \mathrm{k}},$$
where convergent power series can be differentiated term by term. This series converges absolutely in $\mathbb{ R}^{p+q+1}$   so that
the two summations can be interchanged giving
   $$f(\bx)=\sum_{ \mathrm{k}\in \mathbb{N}_{0}^{p+1}}\sum_{m=0}^{+\infty} \frac{1}{m!}( \underline{y}D_{x})^{m} (x^{ \mathrm{k}} e^{-\frac{|x|^{2}}{4}}) a_{k}=\sum_{ \mathrm{k}\in \mathbb{N}_{0}^{p+1}} CK[x^{ \mathrm{k}} e^{-\frac{|x|^{2}}{4}}](\bx) a_{ \mathrm{k}}.$$
 By Lemma \ref{factor-onto},   all finite  sums
$$\sum_{ \mathrm{k} \in \mathbb{N}_{0}^{p+1},  \mathrm{k} \ \mathrm{  finite}} CK[x^{ \mathrm{k}} e^{-\frac{|x|^{2}}{4}}] a_{ \mathrm{k}}
=\sum_{ \mathrm{k}\in \mathbb{N}_{0}^{p+1},  \mathrm{k} \ \mathrm{  finite}}\psi_{ \mathrm{k}}(\bx)a_{ \mathrm{k}} $$
 are in the image   $U_{p,q}(L^{2}(\mathbb{R}^{p+1}, dx)\otimes \mathbb{C}_{p+q})$ which is closed. Moreover,  from Lemmas \ref{isometry-Schwarz} and \ref{factor-onto-orth}, we obtain that the condition $f\in L^{2}(\mathbb{R}^{p+q+1}, d\mu)\otimes \mathbb{C}_{p+q}$ is equivalent
to
$$\sum_{ \mathrm{k}\in \mathbb{N}_{0}^{p+1}}\varphi_{ \mathrm{k}}(\bx)a_{ \mathrm{k}}\in L^{2}(\mathbb{R}^{p+1}, dx)\otimes \mathbb{C}_{p+q},$$
which completes the proof.
\end{proof}

\section{A possible quantum mechanical interpretations}

 Define the position and momentum operators  in $L^2(\mathbb{R}^{p+1}, dx)$  by
$$X_k[f](x)= x_{k}f(x), \  P_k[f](x)=-i \frac{\partial}{\partial x_{k}} f(x), \quad k=0,1,\ldots,p,$$
  respectively.
Note that $X_k$  and $P_k$ are densely defined and (unbounded) self-adjoint operators.

 With the Segal-Bargmann  transform defined in  (\ref{Segal-Bargmann-transform-def-holo}),  the  Schr\"{o}dinger  representation in quantum mechanics can be  written as (see \cite[Theorem 6.4]{Hall})
\begin{equation}\label{representation-Hall}
U\circ (X_k-i P_k)\circ U^{-1} [f](z)=z_{k} f(z), \quad k=0,1,\ldots,p.\end{equation}

For  $f\in L^{2}(\mathbb{R}^{p+1}, dx)\otimes \mathbb{C}_{p+q}$, we define
$$X[f](x)=\sum_{i=0}^{p}e_k X_k[f](x)= xf(x), $$
and
$$ P[f](x)=\sum_{i=0}^{p}e_k P_k[f](x)=-i D_{x} f(x),$$
where  $xf(x)$ is the left multiplication by the paravector variable $x\in\mathbb R^{p+1}$ and $D_{x}$ is given by  (\ref{Dxx}).

Using the Segal-Bargmann transform given by Definition  \ref{definition-Segal-Bargmann-transform}, we can now establish a generalization of (\ref{representation-Hall}), which generalizes \cite[Proposition 5.1]{Qian-16} for slice monogenic functions and \cite[Theorem 5.1]{Qian-17}  for    monogenic functions.
 \begin{theorem}\label{representation}
The unitary  map $U_{p,q}$ induces a representation   on the
Hilbert space of  generalized partial-slice monogenic   functions $\mathcal{GSM} L^{2}(\mathbb{R}^{p+q+1}, d\mu)$
 $$U_{p,q}\circ (X-i P)\circ U_{p,q}^{-1} [f](\bx)=CK[xf(x)](\bx).$$
 \end{theorem}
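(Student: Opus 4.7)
The plan is to reduce the identity to a semigroup commutator on $L^2(\mathbb{R}^{p+1},dx)\otimes\mathbb{C}_{p+q}$, then invoke the uniqueness of the CK-extension. Set $g=U_{p,q}^{-1}[f]$. By Lemma \ref{remark-Segal-Bargmann-transform} (extended from $\mathcal{S}(\mathbb{R}^{p+1})\otimes\mathbb{C}_{p+q}$ to $L^2$ by density, using that $U_{p,q}$ is an isometry), we have
$$U_{p,q}[h]=CK\circ\exp\!\Bigl(\tfrac{\Delta_x}{2}\Bigr)[h].$$
Restricting to $\underline{y}=0$ gives $f(x)=\exp(\Delta_x/2)[g](x)$ on $\mathbb{R}^{p+1}$. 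Since $(X-iP)[g](x)=xg(x)-D_xg(x)$ (because $-iP=-D_x$), the left-hand side of the theorem becomes
$$U_{p,q}[(X-iP)g](\bx)=CK\!\left[\exp\!\Bigl(\tfrac{\Delta_x}{2}\Bigr)\bigl(xg-D_xg\bigr)\right](\bx),$$
while the right-hand side equals $CK\bigl[x\exp(\Delta_x/2)[g]\bigr](\bx)$.

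By Corollary \ref{slice-Cauchy-Kovalevsky-extension-entire-Identity}, two generalized partial-slice monogenic functions on $\mathbb{R}^{p+q+1}$ that agree on $\mathbb{R}^{p+1}$ must coincide, so the theorem is equivalent to the identity
$$\exp\!\Bigl(\tfrac{\Delta_x}{2}\Bigr)\bigl[xg(x)-D_xg(x)\bigr]=x\exp\!\Bigl(\tfrac{\Delta_x}{2}\Bigr)[g](x),\qquad x\in\mathbb{R}^{p+1}.$$
This I would first verify on the dense subspace $\mathcal{S}(\mathbb{R}^{p+1})\otimes\mathbb{C}_{p+q}$. The key observation is the commutator relation
$$[\Delta_x,x_k]=2\partial_{x_k},\qquad k=0,1,\ldots,p,$$
which, upon multiplying by $e_k$ and summing, gives $[\Delta_x,x]=2D_x$, where $x$ denotes left multiplication by the paravector variable. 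Since $D_x$ has constant coefficients, $[\Delta_x,D_x]=0$, so the Hadamard/BCH expansion truncates after one term:
$$\exp\!\Bigl(\tfrac{\Delta_x}{2}\Bigr)\,x\,\exp\!\Bigl(-\tfrac{\Delta_x}{2}\Bigr)=\sum_{n=0}^{\infty}\frac{1}{n!\,2^{n}}\operatorname{ad}_{\Delta_x}^{n}(x)=x+D_x.$$
Hence $\exp(\Delta_x/2)[xg]=x\exp(\Delta_x/2)[g]+D_x\exp(\Delta_x/2)[g]=x\exp(\Delta_x/2)[g]+\exp(\Delta_x/2)[D_xg]$ (the last equality since $D_x$ commutes with $\exp(\Delta_x/2)$), and rearranging yields the required identity.

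To make the formal manipulation rigorous I would work in the Fourier domain: for $h\in\mathcal{S}(\mathbb{R}^{p+1})\otimes\mathbb{C}_{p+q}$, the operator $\exp(\Delta_x/2)$ is the Fourier multiplier $e^{-|\xi|^{2}/2}$, while $x_k$ corresponds to $i\partial_{\xi_k}$ and $\partial_{x_k}$ to $i\xi_k$; the identity then reduces to the trivial product rule $\partial_{\xi_k}(e^{-|\xi|^{2}/2}\widehat{g})=-\xi_k e^{-|\xi|^{2}/2}\widehat{g}+e^{-|\xi|^{2}/2}\partial_{\xi_k}\widehat{g}$, with $-\xi_k e^{-|\xi|^{2}/2}\widehat g$ being the symbol of $\exp(\Delta_x/2)[D_xg]$ (up to the appropriate factor of $i$). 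The main obstacle will be the domain bookkeeping, namely showing that the preimage $g=U_{p,q}^{-1}[f]$ lies in the common domain of $X$, $P$ and $\exp(\Delta_x/2)X$ and that both sides are legitimately in $L^{2}$ (resp. $\mathcal{GSM}L^{2}$). This is handled by restricting to the dense core corresponding to the orthonormal basis $\{\varphi_{\mathrm{k}}\}$ of Lemma \ref{factor-onto-orth}, for which all operations are manifestly well defined and term-by-term differentiation of the Taylor-type expansion of $f$ established in the proof of Theorem \ref{main-Coherent-state-transforms} is justified; the identity then extends to the full natural domain by closability of $X-iP$ and continuity of $U_{p,q}$.
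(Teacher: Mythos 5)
Your proposal is correct and follows essentially the same route as the paper: both reduce, via density of $\mathcal{S}(\mathbb{R}^{p+1})\otimes\mathbb{C}_{p+q}$ and the factorization $U_{p,q}=CK\circ\exp(\frac{\Delta_x}{2})$, to the intertwining identity $\exp(\frac{\Delta_x}{2})\circ(X-iP)=X\circ\exp(\frac{\Delta_x}{2})$ on $\mathbb{R}^{p+1}$ and then apply $CK$. The only difference is that you prove this identity yourself via the commutator $[\Delta_x,x]=2D_x$ (equivalently the product rule on the Fourier side), whereas the paper obtains it by applying the classical holomorphic relation (\ref{representation-Hall}) to the Fourier integral representation of $(X-iP)[h]$; your version is self-contained but otherwise equivalent.
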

\begin{proof}
By the density of $ \mathcal{S}(\mathbb{R}^{p+1})\otimes \mathbb{C}_{p+q}$ in $L^{2}(\mathbb{R}^{p+1}, dx)\otimes \mathbb{C}_{p+q}$, we consider only $h\in \mathcal{S}(\mathbb{R}^{p+1})\otimes \mathbb{C}_{p+q}$ and let $f=U_{p,q}[h]\in \mathcal{GSM} L^{2}(\mathbb{R}^{p+q+1}, d\mu)$. Then we consider
$$(X-i P)[h](x)=\frac{1}{ (2\pi)^{(p+1)/2} }\int_{\mathbb{R}^{p+1} }   (x-i\xi) e^{i\langle x, \xi \rangle}    \widehat{h}(\xi)d\xi.$$
From (\ref{representation-Hall}), it follows that
$$\exp(\frac{\Delta_{x}}{2}) [(x-i\xi) e^{i\langle x, \xi \rangle}]=x \exp (\frac{\Delta_{x}}{2})  [e^{i\langle x, \xi \rangle}].$$
Hence, we have
\begin{eqnarray*}
  U_{p,q}\circ(X-i P)\circ U_{p,q}^{-1} [f]
&=&CK\circ \exp\frac{\Delta_{x}}{2} \circ (X-i P)[h(x)]
\\
&=&CK \Big[\frac{1}{ (2\pi)^{(p+1)/2} }\int_{\mathbb{R}^{p+1} }   x \exp\frac{\Delta_{x}}{2} [ e^{i\langle x, \xi \rangle}  ] \widehat{h}(\xi)d\xi\Big]
\\
 &=& CK  [   x \exp\frac{\Delta_{x}}{2}  h(x)  ].
 \end{eqnarray*}
 Recalling that $f(x)=\exp\frac{\Delta_{x}}{2}  [h](x)$ for all $x\in \mathbb{R}^{p+1}$, we finally get
$$ U_{p,q}\circ(X-i P)\circ U_{p,q}^{-1} [f](\bx)= CK  [   x f(x)  ] (\bx),$$
which concludes the proof.
\end{proof}





\vskip 10mm

\begin{thebibliography}{99}






  \bibitem{Arai}A. Arai,  \textit{Analysis on Fock spaces and mathematical theory of quantum fields. An introduction to mathematical analysis of quantum fields}, World Scientific Publishing Co. Pte. Ltd., Hackensack, NJ, 2018.

 \bibitem{Bargmann} V. Bargmann, \textit{On a Hilbert space of analytic functions and an associated integral transform}, Comm. Pure Appl. Math. 14 (1961), 187-214.

\bibitem{Bernstein-Schufmann}
S. Bernstein,  S. Schufmann,\textit{The Segal-Bargmann transform in Clifford analysis},
Oper. Theory Adv. Appl., 286 Linear Oper. Linear Syst.
Birkh\"{a}user/Springer, Cham, 2021, 29-52.




\bibitem{Brackx} F. Brackx, R. Delanghe, F. Sommen, \textit{Clifford analysis}, Research Notes in Mathematics, Vol. 76, Pitman, Boston, 1982.


     \bibitem{Cnudde-De} L. Cnudde, H. De Bie, \textit{Slice Segal-Bargmann transform},
J. Phys. A 50 (2017), no. 25, 255207, 23 pp.



\bibitem{Colombo-Sabadini-Struppa-09} F. Colombo, I. Sabadini, D. C. Struppa, \textit{Slice monogenic functions}, Israel J. Math.  171   (2009), 385-403.

     \bibitem{Colombo-Sabadini-Struppa-11} F. Colombo, I. Sabadini, D. C. Struppa, \textit{Noncommutative functional calculus. Theory and applications of slice hyperholomorphic functions}, Progress in Mathematics, Vol. 289, Birkh\"auser/Springer, Basel, 2011.


\bibitem{Dang}P. Dang,  J. Mour\~{a}o, J. P. Nunes,  T. Qian,   \textit{Clifford coherent state transforms on spheres}. J. Geom. Phys. 124 (2018), 225-232.

 \bibitem{De-Diki}A. De Martino, K. Diki, \textit{On the quaternionic short-time Fourier and Segal-Bargmann transforms}, Mediterr. J. Math. 18 (2021), no. 3, Paper No. 110, 22 pp.

\bibitem{Delanghe-Sommen-Soucek} R. Delanghe, F. Sommen, V. Sou\v{c}ek, \textit{Clifford algebra and spinor-valued functions. A function theory for the Dirac operator}, Mathematics and its Applications, Vol. 53, Kluwer Academic Publishers Group, Dordrecht, 1992.

\bibitem{Diki} K. Diki, \textit{The Cholewinski-Fock space in the slice hyperholomorphic setting},  Math. Methods Appl. Sci. 42 (2019), no. 6, 2124-2141.
\bibitem{Diki-Ghanmi} K. Diki, A. Ghanmi,
\textit{A quaternionic analogue of the Segal-Bargmann transform},
Complex Anal. Oper. Theory 11 (2017), no. 2, 457-473.

\bibitem{Diki-Sabadini} K. Diki, R. S. Krausshar, I. Sabadini,
\textit{On the Bargmann-Fock-Fueter and
Bergman-Fueter integral transforms},
J. Math. Phys. 60 (2019), no. 8, 083506, 26 pp.






\bibitem{Gurlebeck} K. G\"{u}rlebeck, K. Habetha, W. Spr\"{o}{\ss}ig, \textit{Holomorphic functions in the plane and $n$-dimensional space}, Birkh\"{a}user Verlag, Basel, 2008.

  \bibitem{Hall} B. C. Hall,
 \textit{Holomorphic methods in analysis and mathematical physics}, in First Summer School in Analysis and Mathematical Physics (Cuernavaca Morelos, 1998), 1-59, Contemp. Math., 260, Aportaciones Mat., Amer. Math. Soc., Providence, RI, 2000.

  \bibitem{Hall-13} B. C. Hall,
 \textit{Quantum theory for mathematicians},  Graduate Texts in Mathematics, 267. Springer, New York, 2013.

 \bibitem{Qian-16}W. D. Kirwin,  J. Mour\~{a}o, J. P. Nunes,  T. Qian,   \textit{Extending coherent state transforms to Clifford analysis}, J. Math. Phys. 57 (2016), no. 10, 103505, 10 pp.

\bibitem{Qian-17}J. Mour\~{a}o, J. P. Nunes,  T. Qian,   \textit{Coherent state transforms and the Weyl equation in Clifford analysis}, J. Math. Phys. 58 (2017), no. 1, 013503, 12 pp.


   \bibitem{Pena} D. Pe\~{n}a Pe\~{n}a, I. Sabadini, F. Sommen,
\textit{Segal-Bargmann-Fock modules of monogenic functions}, J. Math. Phys. 58(2017), no.10, 103507, 9 pp.


  \bibitem{Perelomov} A. Perelomov, \textit{Generalized coherent states and their applications},  Texts and Monographs in Physics. Springer-Verlag, Berlin, 1986.



\bibitem{Segal} I. E. Segal,
\textit{The complex-wave representation of the free boson field}, Topics in functional analysis, pp. 321-343,
Adv. Math. Suppl. Stud., 3, Academic Press, New York-London, 1978.


\bibitem{Xu-Ren} Z. Xu, G. Ren, \textit{Sharper uncertainty principles in quaternionic Hilbert spaces}, Math. Methods Appl. Sci. 43 (2020), no. 4, 1608-1630.
\bibitem{Xu-Sabadini} Z. Xu, I. Sabadini, \textit{Generalized partial-slice monogenic functions}, to appear in Trans. Amer. Math. Soc. arXiv: 2309.03698, 2023.
\bibitem{Xu-Sabadini-2} Z. Xu, I. Sabadini, \textit{On the Fueter-Sce theorem for generalized partial-slice monogenic functions}, Ann. Mat. Pura Appl.   DOI: 10.1007/s10231-024-01508-1, online, 2024.
\bibitem{Xu-Sabadini-3} Z. Xu, I. Sabadini, \textit{Generalized partial-slice monogenic functions: a synthesis of two function  theories}, Adv. Appl. Clifford Algebr. 34 (2024), no. 2, Paper No. 10.

\bibitem{Zhu} K.  Zhu, \textit{Analysis on Fock spaces}, Graduate Texts in Mathematics, 263. Springer, New York, 2012.

\end{thebibliography}
\end{document}